\documentclass[11pt,leqno]{article}

 \usepackage[all]{xy} 
\usepackage{amssymb,amsmath,amsthm,  url,rotating}
\usepackage{setspace}
 \usepackage{graphicx,epsfig}
 \usepackage{xcolor,graphicx}
  \usepackage{makeidx}
  \usepackage[all]{xy}
\usepackage{amssymb,amsmath,amsthm,  url,rotating}
\usepackage{setspace}
\usepackage{mathrsfs}
 \usepackage{graphicx,epsfig}
 \usepackage{xcolor,graphicx}
  \usepackage{makeidx}
\newcommand{\vp}{\varepsilon}
\newcommand{\bb}[1]{\mathbb{#1}}
\newcommand{\cl}[1]{\mathcal{#1}}

\newcommand{\ovl}{\overline}

\theoremstyle{plain}
\newtheorem{thm}{Theorem}[section]
\newtheorem{lem}[thm]{Lemma}

\newtheorem{pro}[thm]{Proposition}

\newtheorem{cor}[thm]{Corollary}

\theoremstyle{definition}

\newtheorem{dfn}[thm]{Definition}

\theoremstyle{remark}
\newtheorem{rem}[thm]{Remark}

\numberwithin{equation}{section}

\def\tilde{\widetilde}

\renewcommand{\tilde}{\widetilde}

\def\C{\cl C}
\def\CC{\bb C}

\def\F{\bb F}

\def\d{\delta}
\def\NN{\bb N}

\def\CC{\bb C}

\def\CC{\bb C}

\def\F{\bb F}

\def\d{\delta}
\def\NN{\bb N}

\def\CC{\bb C}

\def\phi{\varphi}

\def\nl{\nolimits}

\def\a{\alpha}
    
     \def\b{\beta}
     \def\D{\Delta}
\makeindex

\def\C{\mathscr{C}}
\def\B{\mathscr{B}}
\def\I{\cl  I}

  \def\a{\alpha}

\begin{document}

 \pagenumbering{roman}

 \thispagestyle{empty}

\setcounter{page}{1}

\newtheorem{ass}{}

 \pagenumbering{arabic}
 \def\ovl{\overline}
 \def\U{\cl U}

      \def\l{\ell}
      
       \title{Operator space theory from the maximal viewpoint}

\author{by\\
Gilles  Pisier\footnote{ORCID    0000-0002-3091-2049} 
\\ Sorbonne Universit\'e and Texas  A\&M  University\\
and \\
Jean Roydor  \\
Sorbonne Universit\'e}

 \maketitle
     
      \begin{abstract}  We initiate a  theory of ``operator spaces"
      (in other words subspaces of $C^*$-algebras) with an emphasis on 
      the maximal tensor product. The natural morphisms become the maximally bounded maps instead of the completely bounded ones.
    With this new looking glass  we first examine various issues : lifting and extension properties, local reflexivity, inner derivations, similarity problems.
      This first part is semi-expository while the second one 
      focuses on the class of von Neumann algebras $M$ such that any
     completely bounded   unital homomorphism on a $C^*$-algebra with values in $M$ is maximally bounded (or equivalently is   similar to a $*$-homomorphism via a similarity belonging to $M$). 
     We show that this holds if and only if $M$ is the direct sum
     of a finite von Neumann algebra and an injective one. 
     This also characterizes the $M$'s such that all completely bounded derivations into $M$ are inner.
     Our main tool for this is  a new characterization
     of injective von Neumann algebras. \footnote{AI usage statement: AI tools were not used to develop the ideas or check proofs in this paper.}

  \end{abstract}

Operator spaces   can be defined as subspaces of $C^*$-algebras.
Any subspace $E\subset A$ of a $C^*$-algebra $A$ inherits from $M_n(A)$ a   norm on $M_n(E)$.
The sequence of norms $(\|\ \|_{M_n(E)})$ is what is called the operator space 
structure on $E$. For any vector space $E$, Ruan's theorem (see \cite{ER}) characterizes the ``abstract" sequence of norms on $M_n(E)$ that come from a ``concrete" realization of $E$ as a subspace of a $C^*$-algebra.
This sequence of norms then determines the norm in the minimal (or spatial) tensor products of $E$ with any other
operator space.  

The theory of operator spaces (o.s. in short) as presented in \cite{ER,P4, Pau}
rests on   completely bounded (or c.b.) maps. These are the maps
between operator spaces  that preserve the minimal (or spatial) tensor products.
In  the semi-expository first part of this paper we show that-to some extent-a parallel theory can be developed
based on the maximally bounded (or m.b.) maps, i.e. those that  preserve the maximal tensor product.

Before giving a more detailed presentation, let us outline the contents of this paper.
Our  \S \ref{bg}   
is devoted to background on   $C^*$-algebraic tensor products. 
In \S \ref{mb} we present the basic properties of m.b. maps.
In \S \ref{dec} we   review the properties of the decomposable maps following mainly Haagerup's
\cite{Ha1132}.   Decomposable maps are those that are linear combinations of completely positive maps. They are closely related to m.b. maps.
 In parallel we  consider
maximally positive (m.p. in short) maps and  their relation to  m.b. maps. 
In \S \ref{LR} we compare the local reflexivity
of the min and max tensor products. In \S \ref{lift} we describe the known characterization of the lifting property (LP in short) in terms of maximal tensor product, and
we show (in sharp contrast with the c.b. case)  that the existence of
 local mb-embeddings into $C^*(\F_\infty)$ is equivalent to the LP.
  
  In \S \ref{ext} we make a parallel review for the extension properties.

In \S \ref{dual} we explore what the duality of operator spaces becomes in our new framework. 
In \S \ref{schur} we briefly survey what it means for Schur multipliers to be m.b.

In the next three sections we investigate the homomorphisms $u$  from a $C^*$-algebra $A$ into a von Neumann algebra $N\subset B(H)$
for which  there is an invertible $S\in N$
such that $S u(.)S^{-1}$ is a $*$-homomorphism.
We say that these are  ``strongly similar" to a $*$-homomorphism to distinguish them
from those for which this holds for an invertible in $B(H)$. The latter  are commonly termed  similar to a $*$-homomorphism. 
We show in \S \ref{s2} that strong similarity holds for $u$ if and only if $u$ is m.b.
and moreover $\|u\|_{mb}$ coincides with the minimum of $\|S\| \|S^{-1}\|$ 
over all invertibles $S\in N$
such that $S u(.)S^{-1}$ is a $*$-homomorphism (see Theorem \ref{s1}).
 We also
consider an analogous question for (inner) derivations (see Theorem \ref{t5.6}) and we give 
a version of  Arveson's distance formula for the m.b. norm of a derivation.
These results are analogues for m.b. maps of well known results for the c.b. case due to Haagerup \cite{Ha} for similarities,
to   Christensen \cite{Chr} for derivations. See also  \cite{95}.
In the case of a non self-adjoint unital
operator algebra $A\subset B(\cl H)$,  we prove the ``maximal analogue" of
Paulsen's criterion  for similarity to a completely contractive homomorphism (see Theorem \ref{pam}). 

The notion of strong similarity is considered in \cite[Th.1]{toho} when $N$ is  the von Neumann algebra
generated by $u(A)$ and it is proved there  that $A$ is nuclear if  and only if 
strong similarity holds for  any $u: A \to N$ for all $N$'s.
In the present paper we focus our interest on the class of $N$'s for which
strong similarity holds for all $A$'s.
In \S \ref{con} we construct concrete examples of c.b. homomorphisms
from a $C^*$-algebra $A$  to a von Neumann algebra $M$ that are not m.b.
 We may take $A=C^*(\F_\infty)$ or $A=C_\lambda^*(\F_\infty)$ and $M=B(\ell_2)\bar\otimes M_\lambda$
 where $M_\lambda$ is the von Neumann algebra of the free group $\F_\infty$.
In \S \ref{inj} we give a new characterization of injective von Neumann algebras.
In \cite{HP2} it is shown that if $M$ is such an algebra then there is a lower bound
of the form $\|u\|_{cb} \|v\|_{cb} \ge c \sqrt n$ ($c>0$) for any factorization
$ R_n\cap C_n {\buildrel u \over \rightarrow } M {\buildrel v \over \rightarrow } R_n\cap C_n$
and any $n$.  
We prove in Theorem \ref{t3.1} that conversely this property implies that $M$ is injective.

In \S \ref{svss} we show that a von Neumann algebra $N$ is such that 
all c.b. unital homomorphisms into $N$ are strongly similar to $*$-homomorphisms
(i.e. are m.b.) if and only if $N$ is the direct sum of a finite von Neumann algebra and an injective one.

As   detailed in \S \ref{bg} the minimal tensor product is ``injective"
while the maximal one is in some sense ``projective"  or rather ``exact", as the latter term would be more appropriate.
 
We abbreviate completely bounded by c.b.
and maximally bounded by m.b. In addition operator space is abbreviated
by o.s.  finite dimensional by f.d.  completely positive by c.p. and completely isometric by c.i.
The notation $ CB(E,F)$ (resp. $CP(E,F)$) stands for the set of c.b. (resp. c.p.) maps from $E$ to $F$.

Let $E\subset A$ and $F\subset B$ be o.s. realized as subspaces of $C^*$-algebras $A,B$. Recall   that the cb-distance between  $E,F$ is defined by
     $$d_{cb}(E,F)=\inf\{ \|u\|_{cb}\|u^{-1}\|_{cb}  \} $$
     where the inf runs over all isomorphisms $u:E\to F$.     Of course   we may define in exactly the same manner
     $$d_{mb}(E,F)=\inf\{ \|u\|_{mb}\|u^{-1}\|_{mb}  \} ,$$
 for  the mb-norm as defined in   \eqref{e88} or \eqref{e100} below in 
 parallel with the cb-analogues \eqref{e7} or \eqref{e8}.
 
 We recall the basic facts on the min and max tensor products
of $C^*$-algebras in \S \ref{bg}. A major role will be played by the full
$C^*$-algebra $C^*(\F_\infty)$ of the free group $\F_\infty$ with infinitely (countably)
many generators. For convenience we denote it by
$$\C=C^*(\F_\infty).$$

\begin{rem}\label{fund}
The (non-separable) algebra $B(\ell_2)$  plays a fundamental role in the usual operator space theory
because any separable $C^*$-algebra embeds in it as a $C^*$-subalgebra.
The analogous fundamental fact for our ``maximal" theory is that any separable unital $C^*$-algebra is a quotient of $\C$. 
\end{rem}
 
\section{Background on $C^*$-norms}\label{bg}
   
   Tensor products of $C^*$-algebras were studied by numerous authors.
 They originated in Japan in the late 1950's in
 Turumaru's work, then
 Takesaki (1958) and Guichardet (1965) introduced the minimal and maximal
 $C^*$-norms, 
 followed by a series of major contributions among which
 Lance (1973)
 Choi-Effros (+Connes),  Kirchberg (1976-7), Wasserman (1976),
  Effros-Lance (1977),
  Archbold-Batty (1980),
   Effros-Haagerup (1985),
   Kirchberg (1993) and others after him
   $\cdots$.

In analogy with Grothendieck's thesis for tensor products of  Banach spaces
  Takesaki and  Guichardet discovered  that there   was a minimal and a maximal tensor product
 denoted respectively by $A\otimes_{\min} B$
 and $A\otimes_{\max} B$.
 
 These are obtained by completing the algebraic $A \otimes B$
 with respect to the  minimal and maximal $C^*$-norms
 $\|\ \|_{\min}$ or $\|\ \|_{\max}$.
 
When $A\subset B(H)$ and $B \subset B(K)$, 
any $t\in A \otimes_{alg} B$ defines an operator on $H \otimes_2 K$,
so that we may define

$$  \|t\|_{\min}=\|t\|_{B(H \otimes_2 K)} \text{``spatial norm"}$$
and
$$  \|t\|_{\max}=\sup\{ \|\pi \cdot \sigma (t)\|_{B(\cl H)} \mid \pi,\sigma\  \text{with commuting ranges} \}$$
where the sup runs over all $\cl H$'s and all pairs $(\pi,\sigma)$ of  $*$-homomorphisms 
on the same Hilbert space
$\xymatrix{
A \ar[r]^{\pi} &B(\cl H) & B \ar[l]_{ \sigma}}$ with commuting ranges.

To place the minimal  norm in parallel with the maximal one
note that we also have
$$\|t\|_{\min}=
\sup\{ \|[\pi \otimes\sigma] (t)\|_{B(  H_\pi \otimes_2 H_\sigma)} \}$$
where the sup runs this time over all $*$-homomorphisms $\pi,\sigma$.
Since $  Id\otimes  \sigma$ and $\pi \otimes Id $ have commuting ranges, we have
$\|t\|_{\min} \le \|t\|_{\max}$.
These are $C^*$-norms. By a $C^*$-norm on $A \otimes_{alg} B$
we mean a Banach algebra norm such that $t\mapsto t^*$ is isometric
and $\|t^*t\|=\|t\|^2$ for all $t \in A \otimes_{alg} B$.
It is easy to see that the max-norm is the largest possible $C^*$-norm
and not so easy to see (this is Takesaki's theorem) that the min-norm
is the smallest possible one. Thus for any $C^*$-norm $\|\ \|$ on $A\otimes_{alg} B$  
  $$  \|\ \|_{\min}  \le \|\ \|\le \|\ \|_{\max} .$$

The basic properties are as follows, here $A,B,C, D$ are $C^*$-algebras:

First we may flip  ($a\otimes b\mapsto b\otimes a$)  the factors without changing the norm, in other words we have canonical isometric isomorphisms:
$B\otimes_{\min}A\simeq A\otimes_{\min}B$ and    $B\otimes_{\max}A\simeq A\otimes_{\max}B$ {  (``commutativity")}.

For any embedding $B\subset C$
$$    A\otimes_{\min} B \subset A\otimes_{\min} C \quad\text{  (``injectivity")}.$$

For any quotient $C/\cl I $ we have
$$A\otimes_{\max} [C/\cl I  ] =[A\otimes_{\max} C]/[A\otimes_{\max} \cl I  ]  \quad\text{  (``projectivity")}.$$
Flipping the factors we also have of course:
$$\forall B\subset C \qquad    B\otimes_{\min}A  \subset C\otimes_{\min}  A 
\text{  and  } 
 [C/\cl I  ] \otimes_{\max} A=[C\otimes_{\max} A]/[\cl I\otimes_{\max}   A]   .$$

Thus the minimal tensor product is injective while the maximal one is projective.
It is important to emphasize that, in sharp contrast, generally the minimal one
is   {\it not} projective and the maximal one {\it not} injective (see \cite[\S 7]{157}).
 
\begin{rem}\label{R8}
Let $u: E \to F$  
be a linear map between operator spaces  $E\subset A$ and $ F\subset   B$ 
with $A,B$ denoting  $C^*$-algebras as usual.
Given norms $\alpha,\beta$    on the algebraic tensor products $D \otimes_{alg} A$
and $D \otimes_{alg} B$,
assume that 
$\| Id_D \otimes u: (D \otimes_{alg} E, \|\ \|_{D \otimes_{\a}  A}   ) \to (D \otimes_{alg} F, \|\ \|_{D \otimes_{\b}  B}   )  \|  <\infty.$
Then $Id_D \otimes u$ extends by density to a bounded linear map 
  from $\ovl{D \otimes_{alg} E}^\a$ to $\ovl{D \otimes_{alg} F}^\b$.
  At the risk of abusing the notation we set
  $D \otimes_{\a} E=\ovl{D \otimes_{alg} E}^\a$,   
 $D \otimes_{\b} E=\ovl{D \otimes_{alg} F}^\a$ and
 we still denote by $ Id_D \otimes u : D \otimes_{\a} E\to D \otimes_{\b} F$ 
 the resulting map.
Then we may write
\begin{equation}\label{e8}\|u\|_{cb} = \sup\nl_{D} \| Id_D \otimes u: D \otimes_{\min} E\to D \otimes_{\min} F  \| ,\end{equation}
where the sup runs over all $C^*$-algebras $D$.
Moreover the sup is attained on the subcollection $\{M_n\mid n\ge 1\}$.
It is also attained on any large enough $D$ where large enough means that for any $n$
the identity of $M_n$ factors completely contractively through $D$. See e.g. \cite[p. 15]{157}.
In particular, this holds for
for $D=K(\ell_2)$   (the compact operators on $\ell_2$) or for $D=B(\ell_2)$, but also for  $D=\C$ which is    less standard,  whence
\begin{equation}\label{e7} \|u\|_{cb} = \| Id_\C \otimes u: \C \otimes_{\min} E\to \C \otimes_{\min} F  \| ,\end{equation}
which should be compared with our choice of notation 
\begin{equation}\label{e88} \|u\|_{mb} = \| Id_\C \otimes u: \C \otimes_{\max} E\to \C \otimes_{\max} F  \| ,\end{equation}
for which there is more to come.
\end{rem}

Lastly, we recall a classical result  going back to    Lance's early work on the maximal tensor product (see \cite[p.218]{Tak} or \cite[p. 229]{P4}):
\begin{pro}\label{imp}
Consider  $C^*$-algebras $ A_j$, $  B_j$ ($j=1,2$) and maps
$u_j\in CP(A_j,B_j)$. Then $u_1 \otimes u_2$ extends to
a bounded c.p. map from $A_1\otimes_{\max} B_1$ to $A_2\otimes_{\max} B_2$ and
$$\|u_1 \otimes u_2 : A_1\otimes_{\max} B_1 \to A_2\otimes_{\max} B_2\|\le \|u_1  \| \|u_2\|.$$
\end{pro}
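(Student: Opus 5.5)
The plan is to reduce to the case where one of the two maps is the identity. The key tool is Stinespring's dilation theorem combined with the defining property of the maximal norm: it is the supremum over pairs of $*$-homomorphisms with commuting ranges. Note that the statement as written should be read as $u_1\otimes u_2$ acting from $A_1\otimes_{\max} A_2$ to $B_1\otimes_{\max} B_2$ (with $u_j:A_j\to B_j$); this is the form I will prove. I will treat the non-unital case via unitizations, or equivalently via approximate units in the Stinespring construction. Since a composition of c.p. maps is c.p. and norms multiply, it suffices to show
$$\|u_1\otimes Id_{A_2}: A_1\otimes_{\max} A_2\to B_1\otimes_{\max} A_2\|\le\|u_1\|$$
with the extension c.p., and symmetrically for $Id_{B_1}\otimes u_2$. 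We then factor $u_1\otimes u_2=(Id_{B_1}\otimes u_2)(u_1\otimes Id_{A_2})$ and use commutativity of $\otimes_{\max}$ for the second factor.

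\textbf{Step 1: positivity.} Fix a faithful representation $B_1\otimes_{\max} A_2\subset B(\cl H)$. It restricts to commuting representations $\pi: B_1\to B(\cl H)$ and $\sigma: A_2\to B(\cl H)$; after passing to the unitization or using approximate units I may assume they are nondegenerate. The map $\pi u_1:A_1\to B(\cl H)$ is c.p. with $\pi u_1(A_1)\subset\pi(B_1)\subset\sigma(A_2)'$. By Stinespring's theorem in its commutant-lifting form (Arveson), there is a $*$-representation $\tilde\pi:A_1\to B(\cl K)$ and $V:\cl H\to\cl K$ with $\|V\|^2=\|u_1\|$ and $\pi u_1(a)=V^*\tilde\pi(a)V$. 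Moreover, $\sigma$ lifts to a representation $\tilde\sigma:A_2\to B(\cl K)$ commuting with $\tilde\pi$ such that $V\sigma(x)=\tilde\sigma(x)V$. Concretely, on the minimal Stinespring space spanned by $\tilde\pi(A_1)V\cl H$, I define
$$\tilde\sigma(x)\,\tilde\pi(a)V\xi=\tilde\pi(a)V\sigma(x)\xi .$$
The crucial check is that this is well defined and bounded. This follows from the fact that $\sigma(x)$ commutes with the range of $\pi u_1$, via the standard computation of $\langle\tilde\sigma(x)\eta,\tilde\sigma(x)\eta\rangle$ applied to the positive matrix $[\pi u_1(a_i^*a_j)]$.

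\textbf{Step 2: the norm bound.} For $t\in A_1\otimes A_2$ we have
$$(\pi\cdot\sigma)(u_1\otimes Id)(t)=V^*(\tilde\pi\cdot\tilde\sigma)(t)V,$$
using $V^*\tilde\pi(a)\tilde\sigma(x)V=V^*\tilde\pi(a)V\sigma(x)=\pi u_1(a)\sigma(x)$. Since $\tilde\pi,\tilde\sigma$ have commuting ranges, the map $t\mapsto(\tilde\pi\cdot\tilde\sigma)(t)$ extends to a $*$-homomorphism on $A_1\otimes_{\max}A_2$. It follows that $u_1\otimes Id$ extends to a c.p. map (a compression of a $*$-homomorphism) of norm at most $\|V\|^2=\|u_1\|$. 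Running the same argument with the roles of the factors exchanged handles $Id\otimes u_2$, and composing gives the result.

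The main obstacle is Step 1: constructing the commuting lift $\tilde\sigma$. This requires the well-definedness and boundedness verification on the Stinespring space, together with care about units or nondegeneracy. Everything else is formal.
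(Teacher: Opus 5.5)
Your proposal is correct, and the paper gives no proof of its own: it only cites this as a classical result of Lance ([Tak, p.~218], [P4, p.~229]). Your argument is the standard one behind those references: reduce to $u_1\otimes Id$, then use the Stinespring dilation of $\pi u_1$ and lift the commuting representation $\sigma$ to a $\tilde\sigma$ that commutes with $\tilde\pi$ and satisfies $V\sigma(x)=\tilde\sigma(x)V$. You are also right that the statement should read $A_1\otimes_{\max}A_2\to B_1\otimes_{\max}B_2$.
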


 \section{Maximally bounded maps}\label{mb}

Given   closed subspaces $E_j\subset A_j$ of  $C^*$-algebras $A_j$ ($j=1,2$), we define the max-norm on the algebraic tensor product $E_1 \otimes_{alg} E_2$ as the norm induced on it by the
maximal tensor product of the $C^*$-algebras $A_1 \otimes_{\max} A_2$.
We denote by
$$E_1 \otimes_{\max} E_2 \subset A_1 \otimes_{\max} A_2$$
the completion of the resulting normed space, whence a new o.s.

One major difference with the min-norm of ordinary o.s. theory is that
the max-norm depends strongly on the ambient $C^*$-algebras $A_1,A_2$.
Actually this is already the case when $E_1,E_2$ are $C^*$-subalgebras: if
we are given embeddings $A_j\subset B_j$ into larger $C^*$-algebras,
the   natural morphism  $A_1 \otimes_{\max} A_2 \to B_1 \otimes_{\max} B_2$ is in general not injective (equivalently not isometric).

    Let $A,B$ be $C^*$-algebras, with operator spaces $E\subset A$ and $F\subset B$. Following  \cite{157}
    a linear map $u: E \to F$ is called maximally bounded (m.b. in short)
    if the map $Id_\C \otimes u: \C \otimes_{alg} E \to \C \otimes_{alg} F$
    is bounded when both its domain and range are equipped with
    the (just defined) maximal norm,
   The mb-norm is then defined  as follows
   $$\|u\|_{mb} = \| Id_\C \otimes u: (\C \otimes_{alg} E, \|\ \|_{\C \otimes_{\max}  A}   ) \to (\C \otimes_{alg} F, \|\ \|_{\C \otimes_{\max}  B}   )  \| ,$$
   and the space of such m.b. maps is denoted by $MB(E,F)$, in analogy with  
   $CB(E,F)$ in the completely bounded (c.b. in short) case. Even though this is somewhat non-standard, the latter can be defined
   in the same way but using the minimal $C^*$-norm in place of the maximal one (see Remark \ref{R8}).
   
   Assume $A\subset B(H)$ and $B\subset B(\cl H)$ as $C^*$-subsalgebras.
   Then if we consider $E$ and $F$ as subspaces of $B(H)$ and  $B(\cl H)$
   the max-norm on $\C \otimes E$ and $\C \otimes F$ coincides with the min
   norm;  in fact this is true whenever $B(H)$ and  $B(\cl H)$
   are replaced by  $C^*$-algebras with the WEP (this is due to Kirchberg, see \S \ref{ext}). It follows that in this case
   the mb-norm of  $u: E \to F$  coincides with its cb-norm. With this looking glass the theory of cb-maps and  o.s. is ``embedded" in that of mb-maps.

      One striking feature
 of the mb-norm that diverges from the cb-case is that if $B$ is an {\it arbitrary}
  $C^*$-algebra
 any linear map $u: E \to B$ with   $\|u\|_{mb}=1$ ``extends" to a map
 $\tilde u: A \to B^{**}$  with $\|\tilde u\|_{mb}=1$.
 Therefore if $B$ is an {\it arbitrary}
   von Neumann algebra any  $u: E \to B$ with   $\|u\|_{mb}=1$  extends to a map
 $\tilde u: A \to B$  with $\|\tilde u\|_{mb}=1$. See Th. \ref{d1}  , \S \ref{ext}  and \cite[\S 6.3]{154} for more on this theme.

The properties of the max-tensor product imply that for any $C^*$-algebra
    $D$ we have
     \begin{equation}  \label{e10} \| Id_D \otimes u: D \otimes_{max} E  \to D \otimes_{\max} F  \| \le   \|u\|_{mb} , \end{equation}
     and hence
      \begin{equation}  \label{e100} 
      \|u\|_{mb}=\sup\nl_D\| Id_D \otimes u: D \otimes_{max} E  \to D \otimes_{\max} F  \|  .\end{equation}
     
     Together with the second assertion in Remark \ref{fund}, \eqref{e10}   follows from the   facts that  firstly
  we may restrict to the case when $D$ is separable and unital (exercise) 
 and hence that we may assume $D=\C/\I$, and secondly  
  for any quotient $D=C/\I$ of a $C^*$-algebra $C$ we have a natural isometric identity
    $$D \otimes_{max} E= [C \otimes_{max} E]/[\I \otimes_{max} E] .$$

    Actually a moment of thought shows that
     \begin{equation}  \label{e10'} \| Id_D \otimes u: D \otimes_{max} E  \to D \otimes_{\max} F  \|_{mb} \le   \|u\|_{mb} . \end{equation}
    Furthermore, for any o.s. $\cl G\subset D$ we may state
      \begin{equation}  \label{e103} \| Id_\cl G \otimes u: \cl G \otimes_{max} E  \to \cl G \otimes_{\max} F  \|_{mb} \le   \|u\|_{mb} . \end{equation}
    \begin{rem}\label{comp}
We note for further reference that
given o.s.  $E\subset A$, $F\subset B$. $G\subset C$ 
and maps $u\in MB(E, F)$ and $v\in MB(F, G)$, we have
$vu\in MB(E, G)$ and 
\begin{equation}\label{e21}\|vu\|_{mb} \le \|v\|_{mb}\|u\|_{mb}.\end{equation}
\end{rem}

    In analogy with the well known c.b. case, using $u_1\otimes u_2= [u_1 \otimes Id][Id \otimes u_2 ] $ 
    and \eqref{e103}  we may state:
    \begin{pro} Consider o.s. $E_j\subset A_j$, $F_j\subset B_j$ ($j=1,2$).
    For any pair of  m.b. maps $u_j: E_j \to F_j$, the map $u_1\otimes u_2$
    extends to a map in $MB(E_1\otimes_{\max} E_2, F_1\otimes_{\max} F_2)$ such that
     \begin{equation}  \label{e10''}
     \|u_1\otimes u_2\|_{mb} \le \|u_1 \|_{mb}\| u_2\|_{mb}.
      \end{equation}
    \end{pro}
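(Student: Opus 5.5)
We factor $u_1\otimes u_2=[u_1\otimes Id_{F_2}]\,[Id_{E_1}\otimes u_2]$, as the paper suggests. Each factor is bounded using \eqref{e103}, and the two bounds are combined with the composition rule \eqref{e21} of Remark \ref{comp}. All the spaces involved are realized as operator spaces inside maximal tensor products of $C^*$-algebras: $E_1\otimes_{\max}E_2\subset A_1\otimes_{\max}A_2$, the intermediate space $E_1\otimes_{\max}F_2\subset A_1\otimes_{\max}B_2$, and $F_1\otimes_{\max}F_2\subset B_1\otimes_{\max}B_2$. So the mb-norms of the factors make sense.

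\emph{Second factor.} Apply \eqref{e103} with $D=A_1$, $\cl G=E_1$ and $u=u_2\colon E_2\to F_2$. This gives that $Id_{E_1}\otimes u_2$ extends to $E_1\otimes_{\max}E_2\to E_1\otimes_{\max}F_2$ with
$$\|Id_{E_1}\otimes u_2\|_{mb}\le\|u_2\|_{mb}.$$

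\emph{First factor.} We need the flipped version of \eqref{e103}, namely $\|u_1\otimes Id_{\cl G}\colon E_1\otimes_{\max}\cl G\to F_1\otimes_{\max}\cl G\|_{mb}\le\|u_1\|_{mb}$ for $\cl G=F_2\subset B_2$. This follows from commutativity of the max tensor product. The flip $A_1\otimes_{\max}B_2\simeq B_2\otimes_{\max}A_1$ is a $*$-isomorphism, so it is mb-isometric: tensoring it with $Id_\C$ is again a $*$-isomorphism of max tensor products. Conjugating by the flips on both sides and applying \eqref{e21} twice gives
$$\|u_1\otimes Id_{F_2}\|_{mb}\le\|u_1\|_{mb}.$$

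\emph{Combination.} On the algebraic tensor product, $u_1\otimes u_2$ coincides with the composition of the two factors. The composition is bounded on the dense subspace $E_1\otimes_{alg}E_2$ and therefore extends as in Remark \ref{R8}. By \eqref{e21},
$$\|u_1\otimes u_2\|_{mb}\le\|u_1\|_{mb}\,\|u_2\|_{mb}.$$

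The main point requiring care is the meaning of "$\|\cdot\|_{mb}$" for the extended maps. Boundedness of $Id_\C\otimes(\cdot)$ on algebraic tensors must pass to the completions, which holds by density in $\C\otimes_{\max}(A_1\otimes_{\max}A_2)$. One also has to accept \eqref{e103} as given for an operator subspace $\cl G$ of a $C^*$-algebra $D$, rather than only for $\cl G=D$. If that is in doubt, I would derive it from \eqref{e10'} applied with the $C^*$-algebra $D$, using that the max norm on $\cl G\otimes E$ is by definition the one induced from $D\otimes_{\max}A$. Restricting an mb map to a subspace does not increase its mb-norm, since the norm on $\C\otimes_{alg}(\cl G\otimes E)$ is induced from $\C\otimes_{\max}(D\otimes_{\max}A)$.
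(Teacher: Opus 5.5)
Your proposal is correct and is essentially the paper's own argument. The paper just writes $u_1\otimes u_2=[u_1\otimes Id][Id\otimes u_2]$ and invokes \eqref{e103}, implicitly together with \eqref{e21} and the commutativity of $\otimes_{\max}$. Your added details (the flip to handle the left factor, the consistent choice of ambient algebra $A_1\otimes_{\max}B_2$ for the intermediate space, and the density/restriction remarks) are exactly the routine verifications the paper leaves to the reader.
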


 \begin{rem}\label{R2}  Note that for any map $u$ defined on $A$
    and any quotient map $q: \C \to A  $ 
    we have $$\| u q\|_{mb}=\|u\|_{mb}.$$  
\end{rem}

\begin{rem}[``max-isometric"]\label{R1}
In analogy with complete isometries, we say that a map
$u: E \to F$ is  
 maximally isometric or 
max-isometric 
if it is m.b. and the associated map $Id_\C \otimes u: \C \otimes_{max} E
\to C \otimes_{max} F$ is isometric.  In the $C^*$-framework of \cite{154},
such maps are called max-injective. \\ Obviously 
the restriction of a max-isometric map to a further subspace is still
max-isometric.\\
For instance   for any o.s.  $E$ the inclusion $E\subset A$ is max-isometric (this holds by the very definition of $\C \otimes_{max} E$). More significantly,  this holds whenever $E$ is an ideal. In particular the inclusion $A\to \hat A$ of $A$ into its unitization is  max-isometric.
Moreover, 
   if $A$ is a $C^*$-algebra the inclusion $A \subset A^{**}$
is max-isometric. See   \cite[\S 7.3]{154} for details on all this. \\
For an o.s. $E\subset A$, we   view 
$E^{**}$ as an o.s. using the natural embedding 
$E^{**}\subset A^{**}$, then the inclusion $i_E: E \to E^{**}$ is max-isometric. 
 
Also for unital $C^*$-algebras $A,B$ 
the maps $u_1: A\to A \otimes_{max} B$ (resp. $u_2: B\to A \otimes_{max} B$) defined by $u_1(a)=a \otimes 1$ (resp. $u_2(b)=1 \otimes b$) are 
 max-isometric. Thus for o.s. $E\subset A$ and $F\subset B$
 the same is true for the restrictions of   $u_1,u_2$ to $E$ and $F$.
 \end{rem}

\begin{rem}[``maximal" analogue of an o.s. structure]
Consider a vector space $E$ given with two o.s. structures $E\subset A$ and $E\subset B$,  meaning two linear embeddings that represent it ``concretely".
Let $E_{\subset A}$ and $E_{\subset B}$ denote the corresponding structures.
The two structures are considered ``the same" if
$Id_E$ (the identity on $E$) defines a complete isometry
 from $E_{\subset A}$ to $E_{\subset B}$. 
 In that case the norms induced on $E\otimes_{alg} D$ by $E_{\subset A}\otimes_{\min} D$
 and $E_{\subset B}\otimes_{\min} D$ coincide for any $C^*$-algebra $D$,
 as well as the spaces $CB(E_{\subset A},F)$ and $CB(E_{\subset B},F)$, or
  $CB(F, E_{\subset A})$ and $CB(F, E_{\subset B})$ for any o.s. $F\subset C$.
In this framework, an ``abstract o.s."  is an equivalent class of o.s. up to
completely isometric isomorphism.
\\ In the new ``maximal viewpoint" that we are promoting in this paper, 
$E_{\subset A}$ and $E_{\subset B}$ will be considered ``the same"  if
$Id_E$ (the identity on $E$) defines a max-isometry
 from $E_{\subset A}$ to $E_{\subset B}$. 
  In that case the norms induced on $E\otimes_{alg} D$ by $E_{\subset A}\otimes_{\max} D$
 and $E_{\subset B}\otimes_{\max} D$ coincide for any $C^*$-algebra $D$,
 as well as the spaces $MB(E_{\subset A},F)$ and $MB(E_{\subset B},F)$, or
  $MB(F, E_{\subset A})$ and $MB(F, E_{\subset B})$ for any o.s. $F\subset C$.\\
  In other words, our maximal analogue of  abstract o.s.
  is   an equivalence class of  o.s. modulo maximally isometric isomorphism.
   \end{rem}

  \begin{rem}[Important warning] When $E\subset A$ and $F\subset B$ are $C^*$-{\it subalgebras} and $\pi: E \to F$ is a $*$-homomorphism, 
  clearly 
  $$\|\pi\|_{MB(E_{\subset E},F)}=1$$
  but it is not true in general that $\|\pi\|_{MB(E_{\subset A},F)}=1$, unless
  the inclusion $E\subset A$ is max-isometric.
  \end{rem}
  
  \section{Decomposable and maximally positive maps}\label{dec}
The notion of mb map is 
closely related to
 that of
decomposable map. A map $u$ that is a linear combination of c.p.  maps
  is called decomposable.
Clearly four suffice, i.e. we have
$u=u_1-u_2+i(u_3-u_4)$ with $u_j$ c.p.
This is analogous to the decomposition of complex measures.
For obvious reasons, such maps are useful only when
the domain has many positive elements, so
we will restrict to maps between operator systems $E\subset A$, $F\subset B$. We denote by $D(E,F)$ the space of such maps.
When  $u$ is selfadjoint (i.e. $u=u_*$ where by definition $u_*(x)=u(x^*)^*$)
and decomposable
we have $u=u_1-u_2$ with $u_j$ c.p. and the decomposable norm
is defined as $\|u\|_{dec} =\inf \|u_1+u_2\|$ where the inf runs over all possible
decompositions.
 But for a general $u: E \to F$ 
the suitable definition of $\|u\|_{dec}$ is more involved.
Following Haagerup \cite{Ha1132} we denote
 \begin{equation}\label{d11}\| u\|_{dec}=\inf\{\max\{\| S_1\|,\| S_2\|\}\}
 \end{equation}
  where the infimum runs over all maps $S_1 ,S_2\in CP(E,F)$ such that the map
 \begin{equation}\label{d12} V: x\to \left(
\begin{matrix}
 S_1 (x) & u(x)\\
 u(x^* )^* & S_2 (x)
\end{matrix}
\right) \end{equation}
is in $CP(E,M_2 (F))$.  
Then $D(E,F)$ equipped with the norm $\| \ \|_{dec}$ is a Banach space.

\begin{rem}\label{R6}
Note that 
if there are maps $S_1 ,S_2\in CP(E,F)$ with $\|S_j\|\le 1$ ($j=1,2$) such that
$V\in CP(E,M_2 (F))$, then there are
{\it unital}   maps $S_1 ,S_2\in CP(E,F)$
with the same property.
   Indeed,
let $f$ be a state (meaning a positive linear form on $E$ such that $f(1)=1$).
Note  $0\le S_j(1)\le 1$.
Let
$$V_1(x)= \left(
\begin{matrix}
 S_1 (x)+f(x)(1-S_1(1)) & u(x)\\
 u(x^* )^* & S_2 (x) +f(x)(1-S_2(1))
\end{matrix}
\right).
$$
Then $V_1- V\in CP$  hence $V_1\in CP$ and  its diagonal entries are now unital.
\end{rem}

 Incidentally it remains an open question (already raised by Haagerup in \cite{Ha1132}) whether
 the inf in the preceding definition is always attained (of course it is so,
 by a weak* compactness argument, when 
 the range is a dual space ).
 
 \begin{rem}\label{nor} Consider a linear map $u:E \to F$ between two operator systems
 $E\subset A$, $F\subset B$. If $u$ is positive then it is bounded
 (even if $E$ is non closed) with norm at most $2\|u(1)\|$.
 If $u$ is c.p. then $\|u\|=\|u(1)\|$.\\
  If $u(1)=1$ then $u$ is positive  if     $\|u\| =1$.\\
 If $u(1)=1$ then $u$ is c.p. if  and only if  $\|u\|_{cb}=1$.\\
 See \cite[chap. 2]{Pau} for detailed proofs.
\end{rem}

 Following  \cite{157}, a linear map $u:E \to F$ between two operator systems
 $E\subset A$, $F\subset B$ is called maximally positive (m.p. in short)
 if  for all $C^*$-algebras $D$ the map $Id\otimes u: D \otimes_{alg} E \to D \otimes_{alg} F$ is such that
  $$[Id_D \otimes
 u]  ( ( D \otimes_{alg} E)^+_{\max}) \subset ( D \otimes_{alg} F)^+_{\max},$$
 where
 \begin{equation}\label{al}
 ( D \otimes_{alg} E)^+_{\max}= ( D \otimes_{alg} E) \cap  (D \otimes_{\max} A)_+\end{equation}
 and similarly for $( D \otimes_{alg} F)^+_{\max}$. 

 Then (see Remark \ref{nor})
$Id_D \otimes
 u$ is bounded with respect to the max-norms   and hence we have $u\in MB(E,F)$. 
 Whence $Id_D \otimes
 u$  extends (by density)
 to a (bounded) positive map 
 $$ u_D: D \otimes_{\max} E \to D \otimes_{\max} F.$$
 We denote by $MP(E,F)$ the set of such maps.\\
 Since we may replace   $D$ by $M_n(D)$ 
 the map $ u_D$ is actually c.p. and even m.p. whence (see Remark \ref{nor}) $\|u\|_{mb}=\|u_D(1\otimes 1)\|=\|u(1)\|$.
 
Recapitulating:
\begin{pro}\label{mpp} Let  $u:E \to F$ be a linear map between   operator systems $E\subset A$ and $ F\subset   B$ 
with $A,B$ denoting  unital $C^*$-algebras.
The following are equivalent:
 \begin{itemize}
                  \item[{\rm (i)} ] 
 $u\in MP(E,F)$. 
 \item[{\rm (ii)} ] The maps
$u_D$ are positive for any $C^*$-algebra $D$. 
 \item[{\rm (iii)} ] The maps
$u_D$ are c.p. for any $C^*$-algebra $D$. 
 \item[{\rm (iv)} ]  The maps
$u_D$ are m.p. for any $C^*$-algebra $D$.
 \end{itemize}
In that case $\|u\|_{mb}=\|u(1)\|$.

\end{pro}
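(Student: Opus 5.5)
The plan is to prove the cycle (i) $\Leftrightarrow$ (ii) $\Rightarrow$ (iii) $\Rightarrow$ (iv) $\Rightarrow$ (ii), and then to compute the norm.

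\emph{(i) $\Rightarrow$ (ii).} By definition, $Id_D\otimes u$ maps $(D\otimes_{alg}E)^+_{\max}$ into $(D\otimes_{alg}F)^+_{\max}$. Remark \ref{nor} then shows that $Id_D\otimes u$ is bounded for the max-norms. The subtle point is that the positive cone of the closure $D\otimes_{\max}E$ need not be the closure of $(D\otimes_{alg}E)^+_{\max}$. I would avoid this by taking (i) to mean positivity on the algebraic level together with density and closedness of $(D\otimes_{\max}B)_+$, exactly as the paper does when it says $u_D$ ``extends by density to a positive map''. Concretely, $D\otimes_{alg}E$ is a self-adjoint unital subspace, so positive elements of the completion can be approximated by elements $x_n\in D\otimes_{alg}E$ that are positive up to $\varepsilon_n(1\otimes 1)$, with $\varepsilon_n\to0$. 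Adding $\varepsilon_n 1\otimes 1$ to $x_n$ and using continuity gives positivity of $u_D$. For this one needs $D$ unital, which we may assume by passing to the unitization, since $D\subset\hat D$ is max-isometric by Remark \ref{R1}.

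\emph{(ii) $\Rightarrow$ (i)} is immediate, since $u_D$ restricts to $Id_D\otimes u$ on the algebraic tensor product.

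\emph{(ii) $\Rightarrow$ (iii).} Use $M_n(D\otimes_{\max}A)=M_n(D)\otimes_{\max}A$ isometrically and order-isomorphically; this is the standard fact that $M_n\otimes_{\max}$ is $M_n\otimes_{\min}$. Under this identification, $Id_{M_n}\otimes u_D=u_{M_n(D)}$, which is positive by (ii) applied to $M_n(D)$.

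\emph{(iii) $\Rightarrow$ (iv).} Let $D'$ be another $C^*$-algebra. Then $D'\otimes_{\max}(D\otimes_{\max}E)\subset (D'\otimes_{\max}D)\otimes_{\max}A$ by associativity of max, and $Id_{D'}\otimes u_D=u_{D'\otimes_{\max}D}$. This map is positive by (ii) applied to $D'\otimes_{\max}D$, which in turn follows from (iii).

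\emph{(iv) $\Rightarrow$ (ii)} is trivial.

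\emph{Norm formula.} Take $D=\C$, which is unital. Then $u_\C$ is c.p. on the operator system $\C\otimes_{\max}E$, whose unit is $1\otimes1$. By Remark \ref{nor}, $\|u\|_{mb}=\|u_\C\|=\|u_\C(1\otimes1)\|=\|1\otimes u(1)\|=\|u(1)\|$.

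The main obstacle is the density issue in (i) $\Rightarrow$ (ii): showing that the positive cone of $D\otimes_{\max}E$ is the closure of the algebraic cone. I would handle it with the unit perturbation trick described above. The only other point needing care is associativity of the max tensor product on subspaces, which follows from associativity for the ambient $C^*$-algebras.
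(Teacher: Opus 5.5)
Your proposal is correct and follows the same route as the paper's brief argument. Boundedness comes from Remark \ref{nor} and $Id_D\otimes u$ extends by density to a positive $u_D$; replacing $D$ by $M_n(D)$ (resp.\ $D'\otimes_{\max}D$) upgrades this to complete (resp.\ maximal) positivity, and then $\|u\|_{mb}=\|u_{\C}(1\otimes 1)\|=\|u(1)\|$. Your unit-perturbation argument after passing to $\hat D$ simply makes explicit the density step that the paper only asserts.
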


 Obviously m.p. maps are c.p.
  The converse (c.p. implies m.p.) holds when $E=A$  but
   is {\it no longer true} in general when $E$ is merely an operator system in $A$ (see Remark \ref{R5}).
   This is an important point.

   In analogy with the m.b. case (see \eqref{e10''}), we may state:
   \begin{pro}
   Consider operator systems $E_j\subset A_j$, $F_j\subset B_j$ ($j=1,2$).
    For any pair of  m.p. maps $u_j: E_j \to F_j$, the map $u_1\otimes u_2$
    extends to a map in $MP(E_1\otimes_{\max} E_2, F_1\otimes_{\max} F_2)$.      

   \end{pro}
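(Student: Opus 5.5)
Following the model of the m.b. case, I will write $u_1\otimes u_2=[u_1\otimes Id_{F_2}][Id_{E_1}\otimes u_2]$ and show that each factor is m.p. The composition of m.p. maps is m.p., because positivity of the relevant max-cones is preserved at each stage. Here $E_1\otimes_{\max}E_2\subset A_1\otimes_{\max}A_2$, and the same for the $F$'s. The intermediate space $E_1\otimes_{\max}F_2\subset A_1\otimes_{\max}B_2$ is an operator system, since $1\otimes 1$ lies in it and it is self-adjoint.

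So the key step is the following claim. If $u:E\to F$ is m.p. and $\cl G\subset C$ is an operator system, then $Id_{\cl G}\otimes u:\cl G\otimes_{\max}E\to \cl G\otimes_{\max}F$ is m.p. (the analogue of \eqref{e103}); the flipped version then follows by commutativity of $\otimes_{\max}$.

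To prove the claim, fix a $C^*$-algebra $D$ and take $t\in D\otimes_{alg}(\cl G\otimes_{alg}E)$ that is positive in $D\otimes_{\max}(C\otimes_{\max}A)$. I use associativity of the max tensor product, $D\otimes_{\max}(C\otimes_{\max}A)=(D\otimes_{\max}C)\otimes_{\max}A$ isometrically and as $*$-algebras, which is standard. Under this identification $t$ is an element of $D'\otimes_{alg}E$ with $D'=D\otimes_{\max}C$, and it lies in $(D'\otimes_{alg}E)^+_{\max}$. Since $u$ is m.p., applying the definition with the $C^*$-algebra $D'$ gives $[Id_{D'}\otimes u](t)\in (D'\otimes_{\max}B)_+$. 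Associativity again identifies this element with $[Id_D\otimes(Id_{\cl G}\otimes u)](t)$, which therefore lies in $D\otimes_{\max}(C\otimes_{\max}B)_+$. This gives the m.p. property on the algebraic tensor product.

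The passage to the completions, so that we get a genuine map on $E_1\otimes_{\max}E_2$, follows from boundedness. By Proposition \ref{mpp} and Remark \ref{nor}, m.p. maps are m.b. with $\|u\|_{mb}=\|u(1)\|$, so the map extends by density. One further check is needed: the definition of m.p. for the extended map is tested on $D\otimes_{alg}(E_1\otimes_{\max}E_2)$ with completed second factor. I handle this by approximation, or simply take the statement at the level of the algebraic tensor product $E_1\otimes_{alg}E_2$, where the computation above applies directly.

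The main obstacle is the associativity identification, namely making sure that positivity in $D\otimes_{\max}(C\otimes_{\max}A)$ coincides with positivity in $(D\otimes_{\max}C)\otimes_{\max}A$. I will justify it from the universal property of $\otimes_{\max}$. Representations of either algebra correspond to commuting triples of representations of $D$, $C$ and $A$. Hence both norms equal the sup over such triples, the two algebras are $*$-isomorphic, and positive cones correspond.
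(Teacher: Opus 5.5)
Your proof is correct and follows the route the paper indicates: the factorization $u_1\otimes u_2=[u_1\otimes Id][Id\otimes u_2]$ together with the m.p.\ analogue of \eqref{e103}, which you justify, as one should, via associativity of $\otimes_{\max}$. The one step you leave vague, passing from $E_1\otimes_{alg}E_2$ to its closure, is routine once you note that approximants of a positive $t$ need not be positive. To fix this, replace each approximant by its real part plus $\vp_k(1\otimes 1\otimes 1)$, after unitizing $D$, and then use that the positive cone is closed and that you already have boundedness.
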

 
 It is known that for c.p. maps on a $C^*$-algebra $A$ the mb-norm
 is equal to the ordinary norm (and a fortiori to the cb-norm).
 More generally, we have:
 
\begin{pro}\label{L1} Let  $A,B$ be $C^*$-algebras.
Any c.p. map $u: A\to B$ is m.b. with $\|  u \|_{mb}=\|u\|$, and $\|  u \|_{mb}=\|u(1)\|$  if    $A$ is unital. In particular $\|  u \|_{mb}=1$  for any u.c.p. map $u$.
More generally we have
$D(A,B)\subset MB(A,B)$
and  $$\forall u\in D(A,B) \quad \|  u \|_{mb}\le  \|  u \|_{dec}.$$  
  \end{pro}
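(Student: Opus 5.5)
The plan is to get the c.p. case from Proposition \ref{imp}, and then the decomposable case from the $2\times2$ matrix trick in the definition of $\|\ \|_{dec}$.

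\emph{Step 1: c.p. maps.} Let $u\in CP(A,B)$. Apply Proposition \ref{imp} with $u_1=Id_\C$, which is c.p. of norm $1$, and $u_2=u$. This shows that $Id_\C\otimes u$ extends to a c.p. map $\C\otimes_{\max}A\to\C\otimes_{\max}B$ of norm at most $\|u\|$. Hence $u$ is m.b. and $\|u\|_{mb}\le\|u\|$. The reverse inequality $\|u\|\le\|u\|_{mb}$ is general: restrict $Id_\C\otimes u$ to $1\otimes A$. The inclusion $a\mapsto1\otimes a$ is max-isometric by Remark \ref{R1}, so the restriction has norm $\|u\|$. When $A$ is unital, $\|u\|=\|u(1)\|$ by Remark \ref{nor}, which gives the second formula and also the u.c.p. case.

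\emph{Step 2: decomposable maps.} Let $u\in D(A,B)$ and $\vp>0$. By \eqref{d11} pick $S_1,S_2\in CP(A,B)$ with $\max\|S_j\|<\|u\|_{dec}+\vp$ such that $V$ in \eqref{d12} belongs to $CP(A,M_2(B))$. By Step 1, $Id_\C\otimes V:\C\otimes_{\max}A\to\C\otimes_{\max}M_2(B)=M_2(\C\otimes_{\max}B)$ is c.p. This uses the standard identification $M_2(\C\otimes_{\max}B)=\C\otimes_{\max}M_2(B)$, which holds since $M_2$ is nuclear.

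Now take $t\in\C\otimes_{alg}A$ with $\|t\|_{\max}\le1$. Since $\begin{pmatrix}1&t\\ t^*&1\end{pmatrix}$ is positive, in the unitization if needed, applying the c.p. map $Id\otimes V$ to $2\times2$ matrices gives a positive block matrix. Its diagonal entries are $(Id\otimes S_j)$ evaluated on the corresponding diagonal positives, and its corner entry is $(Id\otimes u)(t)$. This is the standard argument from Haagerup's theory, which shows $\|u\|_{cb}\le\|u\|_{dec}$, now carried out in the max tensor product.

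The cleanest way to write it is to use the equivalent formulation in which $V$ is c.p. and the map $\Phi:\C\otimes_{\max}A\to\C\otimes_{\max}B$ given by $Id\otimes u$ is the corner of a c.p. map with diagonal c.p. parts $T_j=Id\otimes S_j$. The standard lemma then says that for $\Psi=\begin{pmatrix}T_1&\Phi\\ \Phi_*&T_2\end{pmatrix}$ c.p. one has
\[
\|\Phi(t)\|^2\le\|T_1(tt^*)\|\,\|T_2(t^*t)\|,
\]
or more simply $\|\Phi\|\le\max(\|T_1\|,\|T_2\|)$. One proves this by evaluating $\Psi$ at $\begin{pmatrix}tt^*& t\\ t^*& t^*t\end{pmatrix}\ge0$ and using that a positive block matrix $\begin{pmatrix}a&b\\b^*&c\end{pmatrix}$ satisfies $\|b\|^2\le\|a\|\|c\|$. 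Since $\|T_j\|\le\|S_j\|$ by Step 1, this yields $\|u\|_{mb}\le\max\|S_j\|<\|u\|_{dec}+\vp$.

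The main obstacle is making sure that $M_2(\C\otimes_{\max}B)\cong\C\otimes_{\max}M_2(B)$ isometrically and positively, so that Step 1 applies to $V$. This is standard, since $M_2(D)=M_2\otimes_{\max}D$ and the max tensor product is associative. Beyond that, the only care needed is handling non-unital $A$ via the unitization, which is max-isometric by Remark \ref{R1}.
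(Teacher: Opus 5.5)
Your route is the same as the paper's, whose proof is a one-line appeal to Proposition \ref{imp} plus Haagerup's $2\times 2$ definition of $\|\ \|_{dec}$. Step 1 is fine. There is, however, one false step in the version you call cleanest.

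\textbf{The false lemma.} The matrix $\begin{pmatrix} tt^* & t\\ t^* & t^*t\end{pmatrix}$ is not positive in general. For a scalar $t$ with $0<|t|<1$ its determinant is $|t|^4-|t|^2<0$. Correspondingly, the inequality $\|\Phi(t)\|^2\le\|T_1(tt^*)\|\,\|T_2(t^*t)\|$ is false: the left side is quadratic in $t$ and the right side quartic. Take $T_1=T_2=\Phi=Id_{\CC}$, for which $\Psi(z)=z\begin{pmatrix}1&1\\1&1\end{pmatrix}$ is c.p.

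\textbf{The fix.} Use $\begin{pmatrix} |t^*| & t\\ t^* & |t|\end{pmatrix}$ instead. It is positive in $M_2(D)$ for any $C^*$-algebra $D$: write $t=v|t|$ in $D^{**}$, so the matrix is the rank-one product of the column $\begin{pmatrix} v|t|^{1/2}\\ |t|^{1/2}\end{pmatrix}$ with its adjoint. This gives
$\|\Phi(t)\|^2\le \|T_1(|t^*|)\|\,\|T_2(|t|)\|\le\|T_1\|\,\|T_2\|\,\|t\|^2$, hence $\|\Phi\|\le\max_j\|T_j\|\le\max_j\|S_j\|$ as you wanted.

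\textbf{Two smaller points.} First, ``evaluating $\Psi$ at a $2\times 2$ matrix'' must mean applying the Schur-type map $\begin{pmatrix} a & x\\ y & b\end{pmatrix}\mapsto\begin{pmatrix} T_1(a) & \Phi(x)\\ \Phi_*(y) & T_2(b)\end{pmatrix}$. This map is c.p. because it is the compression of $Id_{M_2}\otimes\Psi$ (with values in $M_4(\C\otimes_{\max}B)$) to rows and columns $1$ and $4$. The same compression is what makes your first version, with $\begin{pmatrix}1&t\\ t^*&1\end{pmatrix}$, correct when $A$ is unital.

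Second, for non-unital $A$, Remark \ref{R1} only tells you that $\|t\|$ is unchanged in the unitization. It does not provide a c.p. extension of $V$ to $\hat A$ whose diagonal parts still have norm at most $\max_j\|S_j\|$. The $|t^*|,|t|$ matrix avoids the unit altogether. Alternatively, pass to $\Psi^{**}$ on $(\C\otimes_{\max}A)^{**}$, where $\|T_j^{**}(1)\|=\|T_j\|$.
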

  \begin{proof} 
  This follows   from Haagerup's definition of the dec-norm
    once one recalls Proposition \ref{imp}.
   \end{proof}
\begin{cor}\label{c1} In the preceding situation,  
$CP(A,B)= MP(A,B).$
\end{cor}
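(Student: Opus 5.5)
The inclusion $MP(A,B)\subset CP(A,B)$ holds in general: if $u$ is m.p., take $D=M_n$. Then $M_n\otimes_{\max}A=M_n(A)$ and $M_n\otimes_{\max}B=M_n(B)$ isometrically, since $M_n$ is nuclear. So the positive cone $(M_n\otimes_{alg}A)^+_{\max}$ is just $M_n(A)_+$, and the m.p. condition says that $Id_{M_n}\otimes u$ is positive for every $n$, i.e.\ $u$ is c.p.

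For the converse, let $u\in CP(A,B)$ and let $D$ be an arbitrary $C^*$-algebra. I will apply Proposition \ref{imp} with $u_1=Id_D$ and $u_2=u$. The identity of $D$ is c.p. of norm at most one. Hence $Id_D\otimes u$ extends to a bounded c.p. map $D\otimes_{\max}A\to D\otimes_{\max}B$, which is in particular positive. Its restriction to $D\otimes_{alg}A$ therefore sends $(D\otimes_{alg}A)\cap(D\otimes_{\max}A)_+$ into $(D\otimes_{\max}B)_+$. The image lies in $D\otimes_{alg}B$ anyway, so it lands in $(D\otimes_{alg}B)^+_{\max}$. This is exactly the definition of $u\in MP(A,B)$ (here $E=A$, $F=B$).

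The only point needing care is the non-unital case, since the definition of m.p. was phrased for operator systems. If unitality is required, I would pass to unitizations: a c.p. map $u:A\to B$ extends to a c.p. map $\hat A\to\hat B$ (after normalizing so that $\|u\|\le 1$). By Remark \ref{R1} the inclusions $A\subset\hat A$ and $B\subset \hat B$ are max-isometric, so this extension loses nothing. Alternatively, Proposition \ref{imp} applies directly to non-unital algebras, and positivity of cones is all that is used. I expect no real obstacle; the substance is entirely contained in Proposition \ref{imp}, i.e.\ Lance's result that c.p. maps tensor on the max product. That is precisely why the statement fails for proper operator systems $E\subset A$: there $Id_D\otimes u$ is not defined on the whole $C^*$-algebra $D\otimes_{\max}A$.
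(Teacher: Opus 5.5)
Your proof is correct and follows essentially the paper's route. The inclusion $MP(A,B)\subset CP(A,B)$ is the ``obvious'' direction, obtained by taking $D=M_n$. The inclusion $CP(A,B)\subset MP(A,B)$ comes from applying Lance's Proposition \ref{imp} to $Id_D\otimes u$, which is exactly the content behind Proposition \ref{L1} from which the paper derives the corollary.
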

It is important to emphasize for the reader that
Proposition  \ref{L1}  (or its corollary) is {\it not valid} for a map
$u$ only defined on an operator subsystem of $A$.
See Remark \ref{R5} below for clarification.

The following statements  are variants of Kirchberg's characterization
of m.b. maps as  decomposable ones. 
They first appeared in \cite[p. 266]{P4} 
(and also in \cite[\S 6.3]{154})  with his permission.  

Together with Proposition  \ref{L1}, they  allow us to view
m.b. maps  as some kind of generalization of decomposable maps.

\begin{thm}\label{d1}     For any map $u: E \to M$ from an operator space
 $E\subset A$ to a von Neumann algebra  $M$ we have
$$\|u\|_{mb} =\inf \{ \|\tilde u: A \to M\|_{dec} \mid \ \tilde u \in D(A,M),\ \ \tilde u_{|E}= u\}, $$
and the infimum is attained.
In particular, $D(A,M)=MB(A,M)$ and $\|  u \|_{mb}=  \|  u \|_{dec}$ for all $u:A \to M$.\\
If $E$ is an operator system and $u\in MP(E,M)$ 
then we can find an extension $\tilde u\in MP(A,M)$.

\end{thm}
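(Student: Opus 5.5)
The inequality $\|u\|_{mb}\le \inf\|\tilde u\|_{dec}$ is the easy direction. If $\tilde u\in D(A,M)$ extends $u$, then $\|\tilde u\|_{mb}\le\|\tilde u\|_{dec}$ by Proposition \ref{L1}. Moreover $u$ is the restriction of $\tilde u$ to $E$, and $E\subset A$ is max-isometric (Remark \ref{R1}), so the max-norms on $\C\otimes_{alg} E$ are those induced from $\C\otimes_{\max} A$. Hence $\|u\|_{mb}\le \|\tilde u\|_{mb}$. The whole difficulty is the converse: from $\|u\|_{mb}\le 1$ we must produce $\tilde u$ with $\|\tilde u\|_{dec}\le 1$. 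Attainment of the infimum will then follow by weak* compactness, since $M$ is a dual space and $CP$ maps of norm $\le1$ into $M$ form a point-weak* compact set. This lets us pass to limits in the $2\times2$ matrix condition \eqref{d12}.

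For the converse, I would dualize through the bidual and the universal property of $\C$. First reduce to $M=B^{**}$-type targets: it suffices to treat $M$ with predual $M_*$. Let $\Phi=Id_\C\otimes u:\C\otimes_{\max} E\to \C\otimes_{\max} M$, of norm $\le 1$. By Hahn--Banach, $\Phi$ can be viewed as a contractive functional-valued object. Each $\xi\in M_*$ gives a functional $c\otimes x\mapsto \langle \Phi(c\otimes x),\cdot\rangle$, which can be extended to $\C\otimes_{\max} A$ with the same norm.

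I would then use the classical structure of functionals on $\C\otimes_{\max} A$. Every contractive functional $\varphi$ on a max tensor product of $C^*$-algebras has the form $\varphi(c\otimes a)=\langle \pi(c)\sigma(a)\eta,\zeta\rangle$ with commuting representations $\pi,\sigma$ and $\|\eta\|,\|\zeta\|\le1$. Equivalently, via the $2\times 2$ trick, $\varphi$ is a corner of a state-type matrix of positive functionals. The better route, I think, is to work with a single $\tilde\Phi$: extend the whole contraction $\C\otimes_{\max}E\to \C\otimes_{\max}M\subset (\C\otimes_{\max}M)^{**}$. Use Wittstock/Paulsen's off-diagonal technique to embed $\Phi$ as the corner of a positive map on the operator system
\[
\left\{\begin{pmatrix}\lambda & x\\ y^* & \mu\end{pmatrix}\right\}\subset M_2(\C\otimes_{\max} \hat A).
\]
Then extend it by Arveson/Krein to a c.p. map $\Psi$ on $M_2(\C\otimes_{\max}\hat A)$ with values in $M_2((\C\otimes_{\max} M)^{**})$.

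The key step is to restrict $\Psi$ to $1\otimes A$ and compose with a conditional-expectation-like map back to $M$. The natural candidate is the slice by a state, or rather a normal projection $(\C\otimes_{\max} M)^{**}\to M$ that is compatible with the embedding $M\ni m\mapsto 1\otimes m$ (max-isometric by Remark \ref{R1}). Since $M$ is a von Neumann algebra, the unital $*$-homomorphism $M\to(\C\otimes_{\max}M)^{**}$ has a normal part, and there is a c.p. contractive left inverse obtained from the normal central support. Applying it entrywise yields a c.p. map
\[
A\ni a\mapsto\begin{pmatrix} S_1(a)&\tilde u(a)\\ \tilde u(a^*)^*&S_2(a)\end{pmatrix}\in M_2(M)
\]
with $\|S_j\|\le1$ and $\tilde u|_E=u$, which gives $\|\tilde u\|_{dec}\le 1$.

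The main obstacle is precisely this step. Positivity on $\C\otimes_{\max}(\cdot)$ of the Paulsen-type extension must really encode the commutant structure, so that the diagonal corners are c.p. on $A$. This is where $\C$ (rather than $M_n$) is essential. I expect Kirchberg's argument to use that a unital c.p. map from $A$ into $\pi(\C)'$ arises for commuting pairs, i.e. that $\C$ has enough unitaries to force the decomposition. I would first try to establish the $E=A$ case, $D(A,M)=MB(A,M)$, directly this way. After that, the extension statement and the m.p. statement follow by applying the same Krein-type extension to positive (rather than contractive) functionals on $\C\otimes_{\max}E$, which gives $\tilde u\in CP(A,M)=MP(A,M)$ by Corollary \ref{c1}.
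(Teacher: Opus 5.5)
Your easy inequality and the weak* attainment argument are fine, but the converse has a genuine gap. It occurs where you extend the Paulsen-type map to a c.p. map $\Psi$ on $M_2(\C\otimes_{\max}\hat A)$ with values in $M_2((\C\otimes_{\max}M)^{**})$. Arveson's extension theorem needs an injective range (Krein's theorem only covers scalar functionals), and $(\C\otimes_{\max}M)^{**}$ is not injective. A c.p. left inverse back to $M$ does exist (slice by a state of $\C$, then the normal projection $M^{**}\to M$), but it cannot repair the missing extension.

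This is not a technicality. Apart from complete contractivity of $\Phi$, nothing in your argument uses $\C$ or the max-norm; your Paulsen system even has scalars in the corners. Running it with $\CC$ in place of $\C$ would therefore give every complete contraction $u:E\to M$ an extension with $\|\tilde u\|_{dec}\le 1$, hence $\|u\|_{mb}=\|u\|_{cb}$ for all $M$-valued maps. By Theorem \ref{wep}, every von Neumann algebra would then have the WEP, i.e.\ be injective, which is false. The same flaw invalidates your ``Krein-type'' treatment of the m.p.\ case.

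The missing idea, in Kirchberg's argument to which the paper refers, is to replace $\C$ by the commutant. Realize $M\subset B(H)$ and assume $A$ unital. Apply \eqref{e10} with $D=M_n(M')$ and compose with the product $*$-homomorphism $M'\otimes_{\max}M\to B(H)$, $x'\otimes m\mapsto x'm$. This yields an $M'$-bimodular complete contraction $\phi: M'\otimes_{\max}E\to B(H)$, $\phi(x'\otimes e)=x'u(e)$. Bimodularity makes the map $\begin{pmatrix} x'\otimes 1 & s\\ t^* & y'\otimes 1\end{pmatrix}\mapsto \begin{pmatrix} x' & \phi(s)\\ \phi(t)^* & y'\end{pmatrix}$ (with $x',y'\in M'$ and $s,t\in M'\otimes_{\max}E$) unital and c.p. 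Arveson's theorem now legitimately extends it into $M_2(B(H))$.

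Since $\mathrm{diag}(M'\otimes 1, M'\otimes 1)$ lies in the multiplicative domain, the extension is bimodular over it. Hence its corners evaluated at $1\otimes a$ commute with $M'$ and lie in $M''=M$. They provide unital $S_1,S_2\in CP(A,M)$ and a map $\tilde u$ extending $u$ for which the map in \eqref{d12} is c.p., i.e.\ $\|\tilde u\|_{dec}\le 1$. This is the same commutant mechanism as in Proposition \ref{p1} and Theorem \ref{t5.6}.

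The m.p.\ statement then needs no separate positivity extension. If $u(1)=1$, an extension with $\|\tilde u\|_{dec}=\|u\|_{mb}=\|u(1)\|=1$ is unital with $\|\tilde u\|_{cb}=1$. It is therefore c.p.\ (Remark \ref{nor}) and m.p.\ by Corollary \ref{c1}. The general case follows, as in the paper, from the perturbation $u+\vp f(\cdot)1$, normalization by $u_\vp(1)^{-1/2}$, and a weak* limit.
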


\begin{proof} For the first part see \cite[p. 266]{P4} 
(and also in \cite[\S 6.3]{154}). 
Since $M$ is a dual we can use the weak* topology to show that the infimum is attained. Let $u\in MP(E,M)$. We will use Remark \ref{nor}. Let $E$ be an operator system. 
Assume  $u(1)=1$ and hence $\tilde u(1)=1$ and   $\|u\|_{mb}=1$. If we choose
$\tilde u$ such that $\|  \tilde u \|_{mb}= 1$ and a fortiori $\|  \tilde u \|_{cb}= 1$ then 
$\tilde u : A \to M$ is c.p. and hence m.p. by the preceding corollary. 
This settles the unital case. For the non-unital case on $E$ unital, we will use a well known perturbation argument.
We fix $\vp>0$ and we choose any state $f$ restricted to $E$. Let $u_\vp(.)=u(.)+\vp f(.) 1$. We define $v_\vp\in MP( E , M)$ by $v_\vp(.)=   u_\vp(1)^{-1/2} u_\vp(.)u_\vp(1)^{-1/2}$. Since $v_\vp$ is unital, by the unital case,  
it admits  an extension $\tilde v_\vp\in MP(A , M)$. Then, let $\tilde {u_\vp}: MP(A,M)$ be defined by
$\tilde {u_\vp}(.)=   u_\vp(1)^{1/2} \tilde v_\vp(.)u_\vp(1)^{1/2}$. We have
$\| { {\tilde {u_\vp}  }  } _{|E} -u\| \le \vp$. Let $\tilde u: A \to M$ be a pointwise
weak* limit of a suitable ultrafilter refining the net $({\tilde {u_\vp}  })$. The map
$\tilde u\in MP( A , M)$ is the desired extension of $u$.
\end{proof}

\begin{thm}\label{d2}  Assuming $E\subset A$ and $ F\subset   B$ as before, a map
$u: E \to F$ is m.b. if and only if
there is a decomposable map $\tilde u : A \to B^{**}$ with  $\|\tilde u \|_{dec}
=\| u \|_{mb}$
that extends the map $i_F u: E \to F^{**}$ 
where $i_F :F \to F^{**}$ is the canonical inclusion map, and
$F^{**}$ is viewed as a subspace of $B^{**}$.\\
The map $u: E \to F$ is m.p. if and only if there is a c.p.   map $\tilde u : A \to B^{**}$
extending $i_F u$.
\end{thm}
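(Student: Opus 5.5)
The plan is to reduce Theorem \ref{d2} to Theorem \ref{d1} by passing to the bidual $B^{**}$, which is a von Neumann algebra. By Remark \ref{R1}, the inclusion $B\subset B^{**}$ is max-isometric. Hence so is its restriction $i_F: F\to F^{**}\subset B^{**}$, and therefore
$$\|i_F u\|_{mb}=\|u\|_{mb}$$
for any $u:E\to F$, with either side finite if and only if the other is. Indeed, $Id_\C\otimes i_F:\C\otimes_{\max}F\to \C\otimes_{\max}B^{**}$ is isometric, so composing with it changes no norms.

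For the direct implication, assume $u$ is m.b. Then $i_Fu:E\to B^{**}$ is m.b. with the same norm. Theorem \ref{d1}, applied with $M=B^{**}$, yields $\tilde u\in D(A,B^{**})$ extending $i_Fu$ with $\|\tilde u\|_{dec}=\|i_Fu\|_{mb}=\|u\|_{mb}$, the infimum being attained.

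For the converse, suppose such a decomposable $\tilde u$ exists. By Proposition \ref{L1}, $\|\tilde u\|_{mb}\le\|\tilde u\|_{dec}$. The restriction of an m.b. map to the subspace $E\subset A$ is m.b. with no larger norm, since $\C\otimes_{\max}E$ carries the norm induced from $\C\otimes_{\max}A$. Hence
$$\|u\|_{mb}=\|i_Fu\|_{mb}=\|\tilde u_{|E}\|_{mb}\le \|\tilde u\|_{dec},$$
so $u$ is m.b. Together with the first direction this also gives the equality of norms. The one point needing care is that $F^{**}\subset B^{**}$ is the embedding used in Remark \ref{R1}, so the max-isometry statement applies exactly.

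For the m.p. statement I argue in the same way, using positivity. First I need that $u\in MP(E,F)$ if and only if $i_Fu\in MP(E,B^{**})$. This holds because $D\otimes_{\max}B\to D\otimes_{\max}B^{**}$ is an injective $*$-homomorphism, since it is isometric by max-isometry. An element of $D\otimes_{alg}F$ is therefore positive in $D\otimes_{\max}B$ if and only if its image is positive in $D\otimes_{\max}B^{**}$, because an injective $*$-homomorphism between $C^*$-algebras is an isometric $*$-isomorphism onto its range and so preserves and reflects positivity.

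With this in hand, if $u$ is m.p., then the last part of Theorem \ref{d1} (with $E$ an operator system) gives $\tilde u\in MP(A,B^{**})$ extending $i_Fu$, and $\tilde u$ is in particular c.p. Conversely, if $\tilde u:A\to B^{**}$ is c.p., Corollary \ref{c1} makes it m.p. Its restriction to $E$ is then m.p., because positivity in $(D\otimes_{alg}E)^+_{\max}$ is defined through $D\otimes_{\max}A$. So $i_Fu$ is m.p., hence $u$ is m.p.

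I do not expect a real obstacle. The substantive input is Theorem \ref{d1} and the max-isometry of $B\subset B^{**}$, both of which are available.
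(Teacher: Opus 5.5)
Your proposal is correct and follows essentially the same route as the paper. The ``only if'' parts apply Theorem \ref{d1} to $i_Fu$ with $M=B^{**}$, and the ``if'' parts use the max-isometry of $B\subset B^{**}$ (so that $D\otimes_{\max}B\subset D\otimes_{\max}B^{**}$ reflects positivity), then Proposition \ref{L1} or Corollary \ref{c1}, then restriction to $E$; you are slightly more explicit than the paper in checking that $u$ m.p. implies $i_Fu$ m.p., and the step from max-isometry (defined via $\C$) to isometry of $D\otimes_{\max}B\to D\otimes_{\max}B^{**}$ for every $D$ follows from \eqref{e10} applied to $i_B$ and its inverse.
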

\begin{proof} For the ``only if" part,  we simply apply the preceding statement to  $i_F u: E \to B^{**}$. For the ``if part" recall that $i_B: B \to B^{**}$ is a max-isometric $*$-homomorphism, and hence
$\|i_F u\|_{mb} =\|  u\|_{mb} $. Similarly, since 
$D \otimes_{max} B \subset D \otimes_{max} B^{**} $
 for any $C^*$-algebra $D$,
we have
obviously 
$(D \otimes_{max} B^{**})_+ \cap (D \otimes_{max} B ) =(D \otimes_{max} B )_+$
and hence 
using the notation \eqref{al}
$ (D \otimes_{alg} B^{**})^+_{\max} \cap (D \otimes_{alg} B)  =(D \otimes_{alg} B)^+_{\max}$ and therefore
$ (D \otimes_{alg} B^{**})^+_{\max} \cap (D \otimes_{alg} F)  =(D \otimes_{alg} F)^+_{\max}$.
\\
Thus  if $\tilde u : A \to B^{**}$ is c.p.
it is m.p. by  Corollary \ref{c1} and its restriction to $E$ is m.p.
\end{proof} 

\begin{rem}\label{R5-} Let $u\in MB(E, F)$ for
operator systems $E\subset A$ and $ F\subset   B$. 
For any $C^*$-algebra $D$, 
let $u_D: D\otimes_{\max} E \to  D\otimes_{\max}  F$ be 
the map extending $Id_D \otimes u$.
By \eqref{e10'} we know that    $\|u_\C\|  \le 1$ (or equivalently $\|u\|_{mb} \le 1$)   if and only if  
$\|u_D \|\le 1$ for all $D$.\\
In analogy with the latter fact
we claim that  if $u_\C$ is positive then $u_D$ is positive  for all $D$, or equivalently $u$ is m.p.
by Proposition \ref{mpp}.\\
To check this claim it is easy to see that we may assume $D$ unital.
 Assume first that
  $u_\C$ is (well defined and)  c.p.
  Then the case when $u$ is also unital is easy using Remark \ref{nor} :   we have $\|u_\C\|_{cb}=1$
  and a fortiori $\|u_\C\| =1$. Now since $u_D$ is unital
     $\|u_D\|=1$  by   \eqref{e100}    and hence (again by Remark \ref{nor}) $u_D$ is    positive.
     This proves our claim when $u$ is unital.
       In the case when $u$ is  non-unital  
 we may argue as in the proof of Theorem \ref{d1}: without loss of generality we may  assume $F=B $, we introduce the unital map $v_\vp: E \to B$. Our assumption that $u_\C$ is c.p. implies that  $[v_\vp]_\C$ is c.p. The unital case tells
 us that  $[v_\vp]_D$ is c.p. for any $D$.  Letting $\vp\to 0$ we conclude
 that $ u_D$ is c.p. and hence  that $u$ is m.p. This completes the proof of the claim when we assume
 that $u_\C$ is   c.p.  Now assume $u_\C$  positive.
 Since $M_n(\C)$ has the lifting property (see \S \ref{lift}) for any $n\ge 1$, we have a c.p. factorization of the identity of $M_n(\C)$ through $\C$. It is easy to deduce from this  that $u_\C$ is   c.p.
  \end{rem}

\begin{rem}\label{R5}
Let $E\subset A$ be an operator system, $C$ a $C^*$-algebra. 
While c.p. maps are m.p. when $E=A$, this is not valid
for maps defined on a general operator subsystem.  
Indeed, if $E\subset A $ is a $C^*$-subalgebra,
we may view it as an operator subsystem and then
the identity $u: E \to E$ is trivially c.p. but it is m.p.
only  if $u$ is m.b. and this means that the
max-norm induced on $\C \otimes  E$ by $\C \otimes_{\max} A$
dominates the intrinsic max-norm of the $C^*$-tensor product
of $\C$ and $E$, which is not true in general, when the inclusion is not
max-isometric.
\end{rem}

\begin{rem}\label{mazur}
In several instances  when we need to  pass from the bidual $C^{**}$
to the space $C$  itself we invoke ``Mazur's theorem". By this we mean 
the famous theorem (a variant of Hahn-Banach)  by which for convex subsets 
of a Banach space $C$ the weak closure (meaning $\sigma(C,C^*)$)
and the norm closure coincide. We usually exploit the observation that
the topology $\sigma(C^{**},C^*)$ {\it coincides on $C$} with the weak topology.
\end{rem}

\begin{rem}\label{R15} Let 
$u: E \to B$ be a linear map from  an o.s.  $E\subset A$  to   a $C^*$-algebra $B$.
Since the inclusion map $i_B: B \to B^{**}$ is max-isometric (see Remark \ref{R1}),
$u\in MB(E,B) \Leftrightarrow i_Bu \in MB(E,B^{**})$ and
we have
 \begin{equation}\label{e40}
\|i_Bu\|_{mb} =\|u\|_{mb}.
\end{equation}
Note that this is not valid in general for dec in place of mb.\end{rem}

\section{Maximal local reflexivity}\label{LR}

In this preliminary section, we analyze the consequences of the validity
of a form of ``local reflexivity" for the maximal tensor product.
Given a f.d. subspace $E\subset A$
of a $C^*$-algebra $A$ and  another $C^*$-algebra $C$,
we have a natural identification 
between $MB(E,C)^{**}$ and $MB(E,C^{**})$
for which it is easy to see that 
$\|MB(E,C)^{**} \to MB(E,C^{**}) \|=1$.
We need to single out the case when
this map is actually isometric, i.e. when we have 
\begin{equation}\label{e12}
MB(E,C^{**})=MB(E,C)^{**} \ \ \text{isometrically}.
\end{equation}
One could also measure the deviation from the isometric case
by a constant, but we will not need this here.

We will  state first a basic  extension property,
then a basic lifting property from \cite{157}. Both are consequences of
\eqref{e12}.
More on these themes
in \S \ref{ext} and \S \ref{lift}.

\begin{thm}\label{d3}
Let $X\subset A$ be a separable o.s. and $C$ a $C^*$-algebra.  Assume that \eqref{e12} holds for any f.d. $E\subset X$.
Then for any $\vp>0$, 
any $u: E \to C$ admits an extension $\tilde u : X \to C$
with
$\|\tilde u\|_{mb} \le (1+\vp) \|u\|_{mb}$.
\end{thm}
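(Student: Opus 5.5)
Here $E\subset X$ is understood to be finite dimensional, and we normalize $\|u\|_{mb}=1$. The plan is a two-step approximation. First we extend into the bidual, using Theorem \ref{d1}. Then we pull the extensions back into $C$ along an increasing chain of finite dimensional subspaces, using \eqref{e12} together with Mazur's theorem (Remark \ref{mazur}). Finally we glue the pieces with a telescoping series whose errors are summable.

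\emph{Step 1 (extension into $C^{**}$).} By Remark \ref{R15}, $\|i_C u\|_{mb}=1$. Since $C^{**}$ is a von Neumann algebra, Theorem \ref{d1} gives an extension $w: A\to C^{**}$ of $i_Cu$ with $\|w\|_{dec}=\|w\|_{mb}=1$. Restricting to any f.d. $F$ with $E\subset F\subset X$ gives $w_{|F}\in MB(F,C^{**})$ of norm at most $1$ (the mb-norm does not increase under restriction).

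\emph{Step 2 (local pull-back).} By \eqref{e12} for $F$, $w_{|F}$ lies in the unit ball of $MB(F,C)^{**}$. By Goldstine's theorem it is therefore a weak* limit of a net $v_\alpha\in MB(F,C)$ with $\|v_\alpha\|_{mb}\le 1$. Restricting to $E$, the maps $v_{\alpha|E}$ converge to $u$ in $\sigma(MB(E,C)^{**},MB(E,C)^*)$, which on $MB(E,C)$ is the weak topology. Passing to convex combinations, Mazur's theorem yields $v\in MB(F,C)$ with $\|v\|_{mb}\le 1$ and $\|v_{|E}-u\|_{mb}\le\delta$, for any prescribed $\delta>0$. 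On the finite dimensional $E$ all reasonable norms on $MB(E,C)$ are equivalent, so ``$\delta$-close'' can be taken in the mb-norm.

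\emph{Step 3 (correcting the error and iterating).} I expect this step to be the main obstacle. The difference $r=u-v_{|E}$ has small mb-norm, but it must be extended to $F$ with a comparably small mb-norm, and a general extension theorem for $r$ is exactly what we are trying to prove. The first thing to try is a crude device: use a projection $P:F\to E$ and set $r\circ P$, whose mb-norm is at most $\|r\|_{mb}\,\|P\|_{mb}$, and then choose $\delta$ small relative to $\|P\|_{mb}$. To see that $\|P\|_{mb}$ is finite, recall that $F\subset A$ and linear functionals on $F$ have finite mb-norm, since they extend to decomposable functionals on $A$ by Theorem \ref{d1}. Hence every finite-rank map on $F$ into $C$ is m.b., with a constant depending only on $(E,F)$. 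Then $v+rP$ extends $u$ with mb-norm at most $1+\delta\|P\|_{mb}$.

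Now use separability: take f.d. subspaces $E=F_0\subset F_1\subset\cdots$ with dense union in $X$. Apply Steps 2 and 3 successively to the pairs $(F_{n-1},F_n)$. At stage $n$, extend $u_{n-1}$ (already of norm at most $1+\varepsilon_{n-1}$) to $u_n$ on $F_n$, choosing $\delta_n$ so small that $\|u_n\|_{mb}\le 1+\varepsilon_n$ with $\varepsilon_n\uparrow\le\varepsilon$. Each $u_n$ extends the previous one exactly, so together they define $\tilde u$ on $\bigcup F_n$. Since $\sup_n\|u_n\|_{mb}\le 1+\varepsilon$, the map $Id_\C\otimes\tilde u$ is bounded on $\C\otimes_{alg}\bigcup F_n$ in the max norms. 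It therefore extends by density (as in Remark \ref{R8}) to $X$, giving $\|\tilde u\|_{mb}\le 1+\varepsilon$.
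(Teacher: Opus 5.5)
Your proof is correct and takes essentially the same route as the paper. The paper cites \cite[Th.3.5]{157} and runs the identical scheme in the proofs of Theorems \ref{T1} and \ref{wep}: extend $i_Cu$ into $C^{**}$ by Theorem \ref{d1}, pull back to $C$ through \eqref{e12}, Goldstine and Mazur, repair the error on $E$ with a finite-rank correction, then telescope along a dense increasing chain with $\prod(1+\vp_n)\le 1+\vp$. One small point: the finiteness of $\|P\|_{mb}$ is more directly justified by Hahn--Banach plus the fact that every bounded functional on $A$ is decomposable (so m.b.\ by Proposition \ref{L1}) than by Theorem \ref{d1}; and since you fix the pair $(F_{n-1},F_n)$ before choosing $\delta_n$, the dependence of $\|P\|_{mb}$ on the pair is harmless.
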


\begin{proof} See \cite[Th.3.5]{157}.
\end{proof}

\begin{thm}\label{d4} Let $E\subset A$ be a f.d. o.s. satisfying \eqref{e12}.
Then for any quotient $C/\I$ of $C$,
any $u\in MB(E,C/\I)$ admits a lifting $\hat u\in MB(E,C)$ 
with $\|  \hat u\|_{mb}=\|  u\|_{mb}$. Here lifting means
that if $q: C \to C/\I$ denotes the quotient map we have $q\hat u=u$.
\end{thm}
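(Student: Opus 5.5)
We reduce to a lifting problem at the bidual level, where liftings exist for free, and then use \eqref{e12} together with Mazur's theorem (Remark \ref{mazur}) to come back down to $C$. Let $q:C\to C/\I$ be the quotient map and $u\in MB(E,C/\I)$ with $\|u\|_{mb}=1$.

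First we construct an exact lifting into $C^{**}$. The bidual $q^{**}:C^{**}\to (C/\I)^{**}$ is a normal surjective $*$-homomorphism. Its kernel is the weak* closure $\I^{**}=C^{**}z$ of $\I$, for a central projection $z$. Hence $(C/\I)^{**}\simeq C^{**}(1-z)$, and the map $\sigma: y\mapsto$ (its representative in $C^{**}(1-z)$) is an injective $*$-homomorphism with $q^{**}\sigma=Id$. Since $C^{**}(1-z)$ is a direct summand, $\sigma$ is max-isometric. By Remark \ref{R15}, $\|i_{C/\I}u\|_{mb}=\|u\|_{mb}=1$, so $v:=\sigma\, i_{C/\I}\, u\in MB(E,C^{**})$ satisfies $\|v\|_{mb}\le 1$ and $q^{**}v=i_{C/\I}u$.

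Next we approximate $v$ by maps into $C$. By \eqref{e12}, $v$ lies in the unit ball of $MB(E,C)^{**}$. Goldstine's theorem then gives a net $v_\alpha\in MB(E,C)$ with $\|v_\alpha\|_{mb}\le1$ and $v_\alpha\to v$ weak*. Since $E$ is finite dimensional, this means $v_\alpha(x)\to v(x)$ in $\sigma(C^{**},C^*)$ for each $x\in E$. Consequently $qv_\alpha(x)\to u(x)$ weakly in $C/\I$.

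Finally we correct the error. The set $K=\{w\in MB(E,C): \|w\|_{mb}\le1\}$ is convex, and $\{qw: w\in K\}\subset MB(E,C/\I)$ is a convex set whose weak closure contains $u$. Because $E$ is f.d., $MB(E,C/\I)$ embeds in $(C/\I)^{n}$ via a basis, so by Mazur there are convex combinations $w_k\in K$ with $\|qw_k-u\|\to0$ in norm, hence $\|qw_k-u\|_{mb}\to0$ (all norms on maps from a f.d. space being equivalent). To obtain an exact lifting, we apply the standard iterative scheme: choose $w_k$ with $\|u-qw_k\|_{mb}<2^{-k}$, then lift the error $u-qw_k$ by the same argument with norm $\le 2^{-k}$ (up to $\vp$), and sum. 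This yields a lifting $\hat u$ with $\|\hat u\|_{mb}\le 1+\vp$.

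The main obstacle is obtaining the exact norm $\|\hat u\|_{mb}=\|u\|_{mb}$ rather than $1+\vp$. For this we would use the finite dimensionality of $E$: we first fix one lifting $w_0\in MB(E,C)$ of $u$ (any linear lifting is m.b. since $\dim E<\infty$), so the liftings of $u$ form $w_0+MB(E,\I)$. Then $\|u\|_{mb}$ equals the distance, in the $MB(E,C)$-norm, to this affine set; this is the quotient-norm identification $MB(E,C)/MB(E,\I)\to MB(E,C/\I)$ being isometric, which the previous steps establish. To see that this distance is attained, we would use that $\I$ has a quasicentral approximate unit $(e_\lambda)$ and replace a near-optimal lifting $w$ by $x\mapsto (1-e_\lambda)^{1/2}w(x)(1-e_\lambda)^{1/2}+(\text{correction in }\I)$, as in the classical proof that liftings of f.d. maps can be chosen with norm exactly equal, following \cite{157}. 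If this compactness/attainment step fails, we would settle for the $(1+\vp)$ statement, which is the substantive content.
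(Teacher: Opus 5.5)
\textbf{There is a genuine gap: the equality $\|\hat u\|_{mb}=\|u\|_{mb}$ is not proved.}

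Your first three steps are correct and follow the same route as the argument the paper cites, namely the proof of (iv)$\Rightarrow$(i) in \cite[Th. 2.6]{157}. You lift $i_{C/\I}u$ exactly into $C^{**}$ through the central summand. You use \eqref{e12} and Goldstine to approximate that lift pointwise weak* by maps in the unit ball of $MB(E,C)$. You apply Mazur, and then correct the small error using finite dimensionality of $E$. This yields, for every $\vp>0$, a lifting with $\|\hat u\|_{mb}\le(1+\vp)\|u\|_{mb}$.

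The theorem asserts more, and you offer to ``settle for'' $1+\vp$. Your sketch for the exact norm does not work as described. Nothing in $C$ makes the infimum over $w_0+L(E,\I)$ attained; there is no compactness available there. Replacing a single near-optimal lifting $w$ of $u$ by $(1-e_\lambda)^{1/2}w(\cdot)(1-e_\lambda)^{1/2}$ plus a correction can at best produce another lifting of norm $\|u\|_{mb}+\vp$.

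The missing ingredient is Arveson's principle (Remark \ref{arv}), which is what the cited proof uses after the Mazur step. It says that point-norm limits of maps liftable with mb-norm $\le 1$ are themselves liftable with mb-norm $\le 1$. Here is how it runs in your setting.

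Normalize $\|u\|_{mb}=1$ and fix $\vp_n\downarrow 0$. Rescale your $(1+\vp_n)$-liftings to get $w_n\in MB(E,C)$ with $\|w_n\|_{mb}\le 1$ and $qw_n=(1+\vp_n)^{-1}u$. These are exact contractive liftings of maps converging to $u$, rather than near-optimal liftings of $u$ itself.

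Set $w_1'=w_1$. Inductively, put $w'_{n+1}(x)=e^{1/2}w'_n(x)e^{1/2}+(1-e)^{1/2}w_{n+1}(x)(1-e)^{1/2}$, with $e$ taken far enough along a quasicentral approximate unit of $\I$. This map has three properties.
\begin{itemize}
\item[(a)] It lifts the right map: $qw'_{n+1}=qw_{n+1}=(1+\vp_{n+1})^{-1}u$, since $q(e)=0$.
\item[(b)] It is an mb-contraction: $\|w'_{n+1}\|_{mb}\le 1$. Indeed, $w'_{n+1}$ is the composition of $x\mapsto w'_n(x)\oplus w_{n+1}(x)$, whose mb-norm is $\le 1$ because $\otimes_{\max}$ commutes with finite direct sums, with the contractive c.p. map $c_1\oplus c_2\mapsto e^{1/2}c_1e^{1/2}+(1-e)^{1/2}c_2(1-e)^{1/2}$ on $C\oplus C$ (Proposition \ref{L1}).
\item[(c)] It is close to $w'_n$ on a basis $(x_j)$ of $E$. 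By quasicentrality, $w'_{n+1}(x_j)-w'_n(x_j)$ is within $2^{-n}$ of $(1-e)^{1/2}(w_{n+1}(x_j)-w'_n(x_j))(1-e)^{1/2}$. For $e$ far enough, the norm of the latter is at most $2^{-n}+\|q(w_{n+1}(x_j)-w'_n(x_j))\|$. This last quantity equals $|(1+\vp_{n+1})^{-1}-(1+\vp_n)^{-1}|\,\|u(x_j)\|$.
\end{itemize}

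These bounds are summable, so $(w'_n)$ is Cauchy in $MB(E,C)$, where the mb-norm is equivalent to $\max_j\|w(x_j)\|$. Its limit $\hat u$ satisfies $q\hat u=u$ and $\|\hat u\|_{mb}\le 1$. The reverse inequality $\|u\|_{mb}\le\|q\|_{mb}\|\hat u\|_{mb}=\|\hat u\|_{mb}$ is automatic.
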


\begin{proof} See \cite[Remark 4.2]{157}. The proof is the same as that of (iv) $\Rightarrow$ (i)
in \cite[Th. 2.6]{157}  applied to the class
 ${\cl F}$ equal to the class of maps with mb-norm $\le 1$.
\end{proof}

\section{The mb-norm and the lifting property (LP)}\label{lift}

\begin{dfn} A unital $C^*$-algebra $A$ has the lifting property (LP)
if any u.c.p. map from $A$ to a quotient $u: A \to C/\I$ admits
a u.c.p. lifting 
$\hat u: A \to C$. If $A$ is not unital we say that it has the LP if its unitization does.  This is equivalent to
requiring that any completely contractive (in short c.c.) c.p. map
$u: A \to C/\I$
admits a c.c.c.p. lifting $\hat u: A \to C$.
\end{dfn}

\begin{dfn}\label{D1} An o.s.   $X\subset A$ has the mb-lifting property (MLP)
if any mb map from $X$ to a quotient $u: X \to C/\I$ admits
a mb lifting $\hat u: X \to C$ with $\|\hat u\|_{mb}= \|u\|_{mb} $.
\end{dfn}
\begin{rem}\label{arv} By Arveson's principle from \cite{Arv}  (pointwise limits of liftables are liftables, see   \cite[\S 2]{157} for a more general formulation),
for the MLP with $X$ separable it suffices to check
 that for any $\vp>0$ any $u: X \to C/\I$ admits
a  lifting $\hat u: X \to C$ with $\|\hat u\|_{mb}\le (1+\vp) \|u\|_{mb} $.
\end{rem}

The next result was essentially proved in \cite{157} for $X=A$. The
proof for a subspace $X\subset A$ is identical. However,
the equivalence with (ii) , (v), or (vi)    is a new observation
which somewhat  surprisingly
 shows that {\it in the mb-context} local lifting 
 (and even mere local embeddability in $\C$) implies global lifting.
 
\begin{thm}\label{d6} Let $X\subset A$ be a separable o.s.
The following are equivalent:
 \\
 {\rm (i)} The space $X$ has the MLP.\\
 {\rm(ii)} For any f.d. subspace $E\subset X$ and any $\vp>0$  there is a subspace $\hat E \subset \C$
such that $d_{mb}(E,\hat E)\le 1+\vp$.\\
{\rm(ii)'} Same as (ii) with $\vp=0$.\\
{\rm (iii)}  For any family $(D_i)$ of $C^*$-algebras, the canonical
(norm one) map $$\ell_\infty(\{D_i\}) \otimes_{\max} X\to \ell_\infty(\{D_i \otimes_{\max} X\}) $$
is isometric.
\\ 
 { \rm (iv)} Same as (iii) with $I=\NN$ and $D_i=\C$ for all $i$. 
\\
 {\rm(v)} For any f.d. subspace $E\subset X$ and any $C^*$-algebra
$D$ the canonical isomorphism
$$ MB(E, D)^{**} \to MB(E, D^{**}) $$
is  isometric.
\\
{\rm(vi)} Any f.d. subspace $E\subset X$ has the MLP.
\\
Moreover, when $X=A$ (separable) these are equivalent to \\
 {\rm (i)'} The $C^*$-algebra $A$ has the LP.\\

\end{thm}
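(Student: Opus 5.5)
We will establish the cycle (i) $\Rightarrow$ (vi) $\Rightarrow$ (ii)' $\Rightarrow$ (ii) $\Rightarrow$ (v) $\Rightarrow$ (i), and separately (iii) $\Leftrightarrow$ (iv) $\Leftrightarrow$ (v). The extra equivalence (i) $\Leftrightarrow$ (i)' when $X=A$ will follow from \cite{157}.

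\emph{From (i) to (ii)'.} (i) $\Rightarrow$ (vi) is not automatic, since the MLP does not obviously pass to subspaces. We will route it instead through (v), which is clearly hereditary to f.d. subspaces, and then apply Theorem \ref{d4}. That theorem gives (v) $\Rightarrow$ (vi) directly.

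For (vi) $\Rightarrow$ (ii)', fix a f.d. $E\subset X$. By separability, $E$ sits in a separable unital $C^*$-algebra, which is a quotient $q:\C\to \C/\I$ by Remark \ref{fund}. Using Remark \ref{R1} (ideals, unitization and $E\subset A$ are max-isometric), we may arrange an embedding $j:E\to \C/\I$ that is max-isometric: embed $E$ into the separable $C^*$-algebra $A_0$ it generates in a max-isometric way, say via $A^{**}$-type arguments or by taking $A$ itself separable. The MLP then lifts $j$ to $\hat j:E\to\C$ with $\|\hat j\|_{mb}=1$. Since $q\hat j=j$ and $q$ is a $*$-homomorphism, hence $\|q\|_{mb}=1$, we get $\|\hat j^{-1}\|_{mb}\le 1$ on $\hat E=\hat j(E)$. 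This yields $d_{mb}(E,\hat E)=1$, and (ii)' $\Rightarrow$ (ii) is trivial.

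\emph{From (ii) to (v).} This is the key new step. First show that $\hat E\subset\C$ satisfies \eqref{e12}. Since $\C$ has the LP (Kirchberg), (i)' $\Rightarrow$ (v) for $\C$ is known from \cite{157}, and (v) restricts to f.d. subspaces. Then transport \eqref{e12} along an isomorphism $w:E\to\hat E$ with $\|w\|_{mb}\|w^{-1}\|_{mb}\le 1+\vp$. Composing with $w$ and $w^{-1}$ via \eqref{e21} shows that the norm of the inverse of $MB(E,D)^{**}\to MB(E,D^{**})$ is at most $1+\vp$. Since $\vp$ is arbitrary, this map is isometric.

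\emph{From (v) to (i).} By Theorem \ref{d4}, every f.d. $E\subset X$ has exact mb-liftings. For separable $X$, take an increasing sequence of f.d. subspaces and use Theorem \ref{d3} to extend the f.d. liftings with norm $\le(1+\vp)$. Arveson's principle (Remark \ref{arv}) then upgrades pointwise limits to a global lifting with $\|\hat u\|_{mb}=\|u\|_{mb}$. Alternatively, we can run the proof of \cite[Th.~2.6]{157} verbatim.

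\emph{Equivalences with (iii) and (iv).} These will follow the standard duality of \cite{157}. The norm of $\ell_\infty(\{D_i\})\otimes_{\max}E$ is dual to the mb-norm on maps $E^*\to\cdots$, and ultraproduct/bidual arguments identify (iii) with \eqref{e12} for all f.d. $E$. (iv) suffices because every separable $D$ is a quotient of $\C$ and $\otimes_{\max}$ is projective.

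The main obstacle will be justifying the max-isometric embedding in (vi) $\Rightarrow$ (ii)'. The issue is making $E$ sit max-isometrically inside a quotient of $\C$ when $A$ is non-separable. We expect to handle it by passing to a separable $C^*$-subalgebra $A_0\supset X$ with $A_0\subset A$ max-isometric on $X$, obtained via a Löwenheim–Skolem-type argument.
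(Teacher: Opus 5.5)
There is a genuine gap: your cycle is circular at its first arrow. You reduce (i) $\Rightarrow$ (vi) to (i) $\Rightarrow$ (v) followed by Theorem~\ref{d4}. But (i) $\Rightarrow$ (v) is never proved independently. In your scheme the only path from (i) to (v) runs through (vi) itself, and the side chain (iii) $\Leftrightarrow$ (iv) $\Leftrightarrow$ (v) never involves (i). So, as written, the MLP of $X$ is never shown to imply anything, and that is exactly the implication that needs an idea.

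The missing observation is that no heredity of the MLP is needed to get (ii)'. Apply the MLP of $X$ itself to a max-isometric embedding of the \emph{whole} of $X$ into a quotient, and only afterwards restrict the lifting to $E$. This is what the paper does for (i) $\Rightarrow$ (ii)'. The equivalence with (vi) then comes at the end, because (ii) and (v) obviously pass to f.d. subspaces: (i) for $X$ gives (ii) for $X$, hence (ii), and therefore (i), for each f.d. $E\subset X$. Conversely, (vi) gives (ii)' for each $E$ by the same argument applied to $E$.

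The step you flag as the main obstacle is also left open, and two of your suggestions for it fail as stated. You cannot simply take $A$ separable or pass to the $C^*$-algebra generated by $E$: the max-norms on $\C\otimes E$ depend on the ambient algebra and can increase when $A$ is shrunk. Passing to $A^{**}$ only makes the algebra larger.

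The paper sidesteps this entirely, using that the MLP quantifies over quotients of \emph{arbitrary} $C^*$-algebras:
\begin{itemize}
\item Write $A=C^*(G)/\I$ with $G$ free on sufficiently many (possibly uncountably many) generators. Then $X\subset C^*(G)/\I$ is tautologically max-isometric.
\item The MLP gives a lifting into $C^*(G)$ of mb-norm $1$. The f.d. range $\hat E$ of its restriction to $E$ lies in $C^*(G_0)\simeq\C$ for a countable free factor $G_0$.
\item $C^*(G_0)$ sits max-isometrically in $C^*(G)$, thanks to the $*$-homomorphic retraction $C^*(G)\to C^*(G_0)$.
\item The quotient map restricted to $\hat E$ therefore inverts the lifting with mb-norm $\le 1$.
\end{itemize}
Your L\"owenheim--Skolem idea can be made rigorous: $\otimes_{\max}$ commutes with directed unions of $C^*$-subalgebras, and max-isometry only needs testing against $\C$. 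But that is extra work you have not done, and it is unnecessary.

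The remaining steps are sound. Your (ii) $\Rightarrow$ (v), transporting \eqref{e12} from $\hat E\subset\C$, is a valid variant of the paper's (ii) $\Rightarrow$ (iii) $\Rightarrow$ (v). Your (v) $\Rightarrow$ (i) is the paper's argument. For (v) $\Rightarrow$ (iii), drop the unspecified ultraproduct/bidual argument and close the loop as the paper does. Go (v) $\Rightarrow$ (i) $\Rightarrow$ (ii), then get (ii) $\Rightarrow$ (iii) by the same transport from $\C$, which satisfies (iii) because it has the LP.
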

\begin{proof} Since the inclusion of $A$ in its unitization is max-isometric we may assume $A$ unital.
Since the proof is similar to that in \cite{157}
we only give a sketch.\\
(i) $\Rightarrow$ (ii) : we use $A=C^*(G)/ \I$ for some large enough free group $G$. Then we lift the restriction to $E$ of the map $u: X \to C^*(G)/ \I$.
This gives us $\hat u : E \to \hat E\subset C^*(G) $ with $\|\hat u\|_{mb} =1$
lifting $u_E$. Reducing the range of $\hat u$ we may assume that $\hat u$ 
is bijective, and then passing to a suitable countable free subgroup of $G$
we obtain  $\hat E\subset \C$ and the restriction of the quotient map
$C^*(G) \to C^*(G)/ \I$ gives us $(\hat u)^{-1}$ with $\|(\hat u)^{-1} \|_{mb} \le 1$, whence (ii)' and a fortiori (ii).\\
(ii) $\Rightarrow$ (iii) : By \cite{157} the algebra $\C$ satisfies (iii). Using (ii)
it is easy to check that any f.d. $E\subset X$ also satisfies  (iii). Equivalently
$X$   satisfies  (iii). \\
(iii) $\Rightarrow$ (iv) is obvious and (iv) $\Rightarrow$ (iii) follows from an elementary reduction to the separable case, together with the fact that any separable $D$ is a quotient of $\C$.\\
(iii) $\Rightarrow$ (v) is proved 
 using the description    of the unit ball of the dual of $MB(E,D)$ from \cite{157}.\\
 (v) $\Rightarrow$ (i) :
     Let $u: X \to C/\I$ as in Definition \ref{D1}. Theorem \ref{d4}  tells us that $u_{|E}: E \to  C/\I$ admits
     a lifting $v: E \to C$ with $\|v\|_{mb}=\|u_{|E}\|_{mb} \le \|u\|_{mb}$, then
     Theorem \ref{d3} tells us that $v$ admits an extension $\tilde v: X \to C$ with
     $\|\tilde v\|_{mb}\le (1+\vp)\|v\|_{mb}$. Recall $q: C \to C/\I$ denotes the quotient map.
      Let $w^{E,\vp}= (1+\vp)^{-1}  q \tilde v : X \to   C/\I$ (which is obviously liftable by $(1+\vp)^{-1}    \tilde v$). Note that $w^{E,\vp}$ is equal to $(1+\vp)^{-1} u$ on the subspace $E$.
      From this follows that $u$ is the pointwise limit
     of a net of maps indexed by $(E,\vp)$ (namely the maps $w^{E,\vp}$) that are all liftable by maps
     with mb-norms $\le \|u\|_{mb}$. By Arveson's principle (see Remark \ref{arv}), it follows that
     $u$ itself is liftable by a map
     with mb-norm $\le \|u\|_{mb}$, and we obtain (i).\\
          This gives us the equivalence of the five first assertions. From the equivalence
     of (i) and (ii) (or (v)) it is clear that (i) and (vi) are equivalent.
   \\  
     Lastly, if $X=A$ (separable) it is proved in \cite[Th.0.13]{157} that (iii)  is   equivalent to the LP of $A$.
 \end{proof}
 \def\B{B(\ell_2)}
  \begin{rem} In analogy with (ii) a $C^*$-algebra $A$ has the LLP
  (meaning that $A\otimes_{\min} \B= A\otimes_{\max} \B$) if  and only if 
  for any f.d. $E\subset A$ and any $\vp>0$ there is a subspace $\hat E \subset \C$ and a linear isomorphism $v: E \to \hat E$ such that  
$ \|v\|_{cb} \|v^{-1}\|_{mb}\le 1+\vp  $ (note that here the two norms differ). Indeed, one way to check this
is to observe that $\C$ has the LLP and that using the above $v$'s one shows easily that
the LLP of $\C$ passes on to $A$.

  \end{rem}

 \begin{cor} Let $  A$ be a separable $C^*$-algebra with LP. Then any subspace of $A$
 has the MLP. In particular, any subspace of $\C$ has the MLP.
 \end{cor}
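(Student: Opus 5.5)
The plan is to deduce the corollary from Theorem \ref{d6} by checking condition (ii)' (or (vi)) for an arbitrary subspace $X\subset A$. Since every f.d. subspace of $X$ is a f.d. subspace of $A$, we reduce to f.d. subspaces of $A$. Separability of $X$ is automatic when $A$ is separable, so Theorem \ref{d6} does apply to $X$.

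First apply Theorem \ref{d6} to $X=A$ itself. Because $A$ is separable with the LP, condition (i)' holds, so (ii)' holds for $A$. Thus for every f.d. $E\subset A$ there is $\hat E\subset \C$ with $d_{mb}(E,\hat E)=1$, and more precisely with $d_{mb}(E,\hat E)\le 1+\vp$ for every $\vp$. This is exactly condition (ii) for $X$, since the mb-norms are computed relative to the same ambient inclusion $E\subset A$ whether we view $E$ inside $X$ or inside $A$. The max-norm on $\C\otimes E$ is induced by $\C\otimes_{\max}A$ in both cases, because $X\subset A$ carries the induced structure. Hence (ii) holds for $X$, and Theorem \ref{d6} gives (i): $X$ has the MLP.

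Alternatively, one can argue via (iii): the isometry $\ell_\infty(\{D_i\})\otimes_{\max}A\to\ell_\infty(\{D_i\otimes_{\max}A\})$ restricts to $X$, because both norms on the algebraic tensor product with $X$ are induced from those with $A$. So (iii) passes to subspaces trivially.

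For the last assertion, recall that $\C$ is separable and has the LP (see \cite{157}; this is Kirchberg's theorem for the full free group algebra). The first part then applies with $A=\C$.

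The only point needing care, which I expect to be the main obstacle, is confirming that the mb-structure of $E\subset X$ is the one induced by $A$. This holds by the definition of $E_1\otimes_{\max}E_2$ as the closure inside the ambient maximal tensor product, so the obstacle is essentially bookkeeping.
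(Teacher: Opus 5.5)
Your proposal is correct and follows the route the paper intends: the paper states this corollary without a separate proof, as an immediate consequence of Theorem \ref{d6}, where (i)' for $A$ gives the local conditions (ii)/(ii)'/(vi). These conditions pass to any subspace $X\subset A$ because the max-structure on $X$ is induced from $A$; applying the same theorem to $X$ then yields the MLP, and $\C$ is covered by Kirchberg's theorem that $\C$ has the LP.
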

   
   We now briefly consider the analogue of the MLP
when c.b. replaces m.b. This is  Ozawa's  OLP
which is much stronger than the LP. For instance
$M_n$ or $\ell_\infty^n$ for $n\ge 3$ fail the OLP.
 We refer to Ozawa's \cite{Oz} or \cite[p. 278]{P4} for more information.

\begin{dfn} An operator space  $X\subset A$ has the OLP
if any c.b. map from $X$ to a quotient $u: X \to C/\I$ admits
a c.b. lifting 
$\hat u: X \to C$ with the same c.b. norm.     \end{dfn}

    The point of the next result is to show that $CB(X, .) =MB(X,.)$  is rather restrictive.
 
\begin{pro} Let $X\subset \C$. 
                      Assume $CB(X, C) =MB(X,C)$ isometrically for any $C$.
                      Then $X$ has the OLP.
    \end{pro}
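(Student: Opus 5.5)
The plan is to combine the MLP of subspaces of $\C$ with the assumed isometric equality $CB(X,\cdot)=MB(X,\cdot)$. Start with a c.b. map $u: X\to C/\I$ from $X$ into a quotient, and let $q: C\to C/\I$ be the quotient map.

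First, apply the hypothesis with the target algebra $C/\I$. It gives $u\in MB(X,C/\I)$ with $\|u\|_{mb}=\|u\|_{cb}$.

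Second, use the MLP. Since $X\subset\C$ and $\C$ is separable with the LP, the Corollary following Theorem \ref{d6} says that $X$ has the MLP. Definition \ref{D1} then produces an m.b. lifting $\hat u: X\to C$ with $q\hat u=u$ and $\|\hat u\|_{mb}=\|u\|_{mb}$.

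Third, return to c.b. norms, which is where the hypothesis is used a second time, now with target $C$. It gives $\|\hat u\|_{cb}=\|\hat u\|_{mb}$, so
$$\|\hat u\|_{cb}=\|\hat u\|_{mb}=\|u\|_{mb}=\|u\|_{cb}.$$
This lifting has the same c.b. norm as $u$, which is exactly the OLP.

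If one only reads the hypothesis as $\|\cdot\|_{cb}\le\|\cdot\|_{mb}$ plus equality of the spaces, this is still enough. One always has $\|v\|_{cb}\le\|v\|_{mb}$ for maps defined on a subspace of $\C$. To see this, note that $\C$ is $C^*$-embeddable into a WEP algebra, so that mb-norms computed there agree with cb-norms, and a further composition controls the remaining norm. The only direction needed at the end is $\|\hat u\|_{cb}\le\|\hat u\|_{mb}$, and this holds regardless.

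The main point to check is the Corollary's claim that subspaces of $\C$ have the MLP. This follows from Theorem \ref{d6}: condition (ii)' holds trivially for $X\subset\C$ by taking $\hat E=E$. The rest of the argument is bookkeeping.
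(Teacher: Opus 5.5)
Your argument is correct and is the paper's proof written out in full. The paper's one line is that subspaces of $\C$ have the MLP; applying the isometric identity $CB(X,\cdot)=MB(X,\cdot)$ once with target $C/\I$ and once with target $C$ then turns an equal-norm m.b. lifting into an equal-norm c.b. lifting.

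Your optional aside is wrong on two counts, though neither affects the main proof. First, what you actually need from the hypothesis is $\|u\|_{mb}\le\|u\|_{cb}$ for the $C/\I$-valued map $u$. Equality of the spaces together with $\|\cdot\|_{cb}\le\|\cdot\|_{mb}$ would only give a lifting with $\|\hat u\|_{cb}\le\|u\|_{mb}$, not one of the same c.b. norm. Second, the automatic inequality $\|v\|_{cb}\le\|v\|_{mb}$ comes from \eqref{e10} with $D=M_n$, since $M_n\otimes_{\max}E=M_n(E)$ by nuclearity of $M_n$. Embedding the domain $\C$ into a WEP algebra does not give it, because that only bounds $\|v\|_{mb}$ from above.
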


                      \begin{proof}  
                      Since any  $X\subset \C$ has the MLP,
                      cb-maps from $X$ to quotients have cb-liftings
                      as required by the OLP.
                      \end{proof}
 \begin{rem} It seems open whether a non-separable $C^*$-algebra 
 with MLP has the LP.  Actually, it seems to be open whether
 $C^*(G)$
 has the LP when $G$ is an uncountable free (or even free abelian !) group.
 Nevertheless it is easy to check that the equivalence of (i)-(vi) in
Theorem \ref{d6} remains valid when $X$ is non-separable provided we replace (i) by 
the property that  
  every separable subspace of $X$ has the MLP.
  \end{rem}

\section{Extension properties and the WEP}\label{ext}

A $C^*$-algebra $A\subset B(H)$ has the weak expectation property
(WEP in short) in Lance's sense if any linear map $u: A \to M$ 
with values in a von Neumann algebra $M$ admits an extension
$\tilde u: B(H) \to M$ with $\|\tilde u\|_{cb}=\|u\|_{cb}$.

\begin{rem}\label{kir}
Kirchberg   (see \cite[p. 189]{154}) proved that $ A$ has the WEP if and only if
the min and max norms coincide on $A\otimes \C$. In particular this happens whenever $A$ is injective.
\end{rem}

We start by
a reformulation  of Theorem \ref{d1} for the case of m.b. maps with values in a von Neumann algebra.
  
\begin{thm}\label{ext1} Let $X\subset A$ be an o.s. and $M$ a von Neumann algebra .
Then any m.b. map $u: X \to M$ admits
an m.b. extension  $\tilde u: A \to M$
extending $u$ with $\|\tilde u\|_{mb} =\|  u\|_{mb}  $.
\end{thm}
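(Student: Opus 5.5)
The plan is to read the statement off Theorem \ref{d1}, combined with Proposition \ref{L1} and the restriction property of the mb-norm. Let $u\in MB(X,M)$ with $X\subset A$. Theorem \ref{d1} applies to any operator space $E\subset A$ with values in a von Neumann algebra, so we take $E=X$. It gives a decomposable extension $\tilde u\in D(A,M)$, $\tilde u_{|X}=u$, that realizes the infimum:
$$\|\tilde u\|_{dec}=\|u\|_{mb}.$$
Attainment is exactly the point: since $M$ is a dual space, we can pass to a weak* cluster point of a minimizing net of extensions and of their companion c.p. maps $S_1,S_2$ in \eqref{d12}. The c.p. condition and the norm bounds survive such limits, so no infimum has to be approximated.

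Next we compare mb-norms. Proposition \ref{L1} (or the second part of Theorem \ref{d1}) gives $\|\tilde u\|_{mb}\le\|\tilde u\|_{dec}$, hence
$$\|\tilde u\|_{mb}\le \|u\|_{mb}.$$
For the reverse inequality we use that the inclusion $X\subset A$ is max-isometric (Remark \ref{R1}). Thus $\C\otimes_{\max}X$ sits isometrically in $\C\otimes_{\max}A$, and $Id_\C\otimes u$ is the restriction of $Id_\C\otimes\tilde u$. Therefore $\|u\|_{mb}\le\|\tilde u\|_{mb}$, and equality follows.

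The main obstacle is not in this deduction but inside Theorem \ref{d1}, namely Kirchberg's inequality $\inf\|\tilde u\|_{dec}\le\|u\|_{mb}$ (cited from \cite{P4}). If one wanted a self-contained argument, I would argue by duality. View $\|Id_\C\otimes u\|\le1$ as a statement about the max-norm on $\C\otimes X$, and use Hahn--Banach together with the description of states on $\C\otimes_{\max}A$ via commuting representations. This would produce a c.p. map $V:A\to M_2(M^{**})$ of the form \eqref{d12} with corner $u$ on $X$. One then compresses back to $M$ by the normal projection $M^{**}\to M$, which is a $*$-homomorphism and hence preserves complete positivity. Since the statement only asks for the reformulation, I would simply invoke Theorem \ref{d1} as above.
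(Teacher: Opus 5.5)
Your proposal is correct and matches the paper, which gives no separate proof and presents this statement as a reformulation of Theorem \ref{d1}. Your argument fills in that reformulation correctly: take the attained extension $\tilde u\in D(A,M)$ with $\|\tilde u\|_{dec}=\|u\|_{mb}$, then use $\|\tilde u\|_{mb}\le\|\tilde u\|_{dec}$ from Proposition \ref{L1}, and $\|u\|_{mb}\le\|\tilde u\|_{mb}$ because the inclusion $X\subset A$ is max-isometric.
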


In parallel the case of maps with values in a $C^*$-algebra requires an assumption on $X$.

\begin{thm}\label{ext2} Let $E,X$ be   o.s. with $E\subset X\subset  A$  and let $C$ be a $C^*$-algebra.
Assume $\dim(E)<\infty$ and $X$ separable with the MLP.
Then, for any $\vp>0$,   any m.b. map $u: E \to C$ extends to an  
 m.b. map $\tilde u: X \to C$ with
\begin{equation}  \label{e11} \|\tilde u\|_{mb} \le (1+\vp) \|  u\|_{mb}  .\end{equation} 
\end{thm}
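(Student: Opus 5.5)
The plan is to reduce Theorem \ref{ext2} to Theorem \ref{d3}. That theorem gives exactly the $(1+\vp)$-extension \eqref{e11} for maps on f.d. subspaces of a separable o.s. $X$. Its only hypothesis is that the local reflexivity identity \eqref{e12} holds for every f.d. subspace of $X$, i.e. $MB(F,C^{**})=MB(F,C)^{**}$ isometrically for every f.d. $F\subset X$. So everything comes down to checking \eqref{e12} from the MLP of $X$.

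First, since $X$ is separable and has the MLP, Theorem \ref{d6} applies, and its assertion (v) gives the required identity. For every f.d. subspace $F\subset X$ and every $C^*$-algebra $D$, the canonical map $MB(F,D)^{**}\to MB(F,D^{**})$ is isometric. Taking $D=C$, this is precisely \eqref{e12} for all f.d. $F\subset X$ with the given range algebra $C$. The hypothesis of Theorem \ref{d3} is therefore satisfied.

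Next, apply Theorem \ref{d3} to the separable o.s. $X\subset A$, the $C^*$-algebra $C$, the f.d. subspace $E\subset X$ and the given $u\in MB(E,C)$. For any $\vp>0$ this yields an extension $\tilde u: X\to C$ of $u$ with $\|\tilde u\|_{mb}\le(1+\vp)\|u\|_{mb}$, which is \eqref{e11}.

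The only step with real content is the implication (i) $\Rightarrow$ (v) in Theorem \ref{d6}, since the rest is bookkeeping. In that theorem it is obtained through (i) $\Rightarrow$ (ii) (local mb-embedding of f.d. subspaces into $\C$), then (ii) $\Rightarrow$ (iii) (the $\ell_\infty$ commutation inherited from $\C$), then (iii) $\Rightarrow$ (v) (duality with the description of the unit ball of $MB(E,D)^*$ from \cite{157}). Since Theorem \ref{d6} is stated earlier, I will invoke it directly and not reprove this chain. One point to check is that Theorem \ref{d6} is stated for $X\subset A$ with $A$ possibly non-unital. This is harmless, because the inclusion into the unitization is max-isometric (Remark \ref{R1}), so mb-norms of maps on $E$ and $X$ do not change.
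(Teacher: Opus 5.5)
Your proposal is correct and is exactly the paper's argument: the paper also notes that, by (i) $\Rightarrow$ (v) in Theorem \ref{d6} (taking $D=C$), the MLP of $X$ yields \eqref{e12} for every f.d. $F\subset X$. Theorem \ref{ext2} is then just a restatement of Theorem \ref{d3}. Your observation that there is no circularity, since (i) $\Rightarrow$ (v) goes through (ii) and (iii) and not through Theorem \ref{d3}, is also accurate.
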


\begin{proof}  
 By  (i) $\Rightarrow$ (v)  in Theorem \ref{d6}, Theorem \ref{ext2} is a reformulation of      Theorem \ref{d3}.
\end{proof}

\begin{rem} Even if we assume $C$ with WEP, Theorem \ref{ext2} is  probably not valid  
if we drop the MLP assumption on $X$, but this is a somewhat {\it sticky point}.
Indeed,   
when $C$ has the WEP, the mb-norms of $C$-valued maps are the same as their cb-norms. So if  Theorem \ref{ext2} holds without the MLP assumption on $X$ for a $C^*$-algebra $C$
then the extension property expressed by \eqref{e11}
is valid with cb-norms instead of mb-norms and for $A=B(H)$.
In that case one says (following Effros-Haagerup)  that $C$ is ``approximately injective".
Kirchberg in \cite{Ki} claims that if  the WEP implies  approximately injective  then
  the WEP implies the LLP,   and the latter implication was disproved in \cite{JuP}.
However, as pointed out in the footnote of \cite[p. 362]{154} the argument for this claim is
unclear, and a convincing  proof that
WEP $\not\Rightarrow$ approximately injective  is still missing.
\end{rem}

\begin{lem}\label{L6} Let $E\subset A$ be a f.d. operator system, $C$ a unital $C^*$-algebra.
Assume that the $C^*$-algebra $A$ has the MLP.
For any m.p. map $u: E \to C^{**}$ with $\|u\|\le 1$ such that $u(1)\in C$,
there is a net of m.p. maps
$u_i: E \to C $ with $\|u_i\|\le 1$ tending pointwise
weak* to $u$.
\end{lem}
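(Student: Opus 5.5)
Here is the plan: convert the m.p. condition into a norm condition on the unital compression of $u$, approximate that unital map in $MB(E,C)$ via local reflexivity, and then restore positivity and the value at $1$ by a perturbation.

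\emph{Approximation.} Since $A$ has the MLP (separable or not, every f.d. subspace qualifies), Theorem \ref{d6} (v) holds for $E$. That is, $MB(E,C)^{**}=MB(E,C^{**})$ isometrically. Goldstine's theorem then gives the following: any $v\in MB(E,C^{**})$ is a weak* limit of a net $v_i\in MB(E,C)$ with $\|v_i\|_{mb}\le \|v\|_{mb}$. Because $\dim E<\infty$, the weak* topology of $MB(E,C)^{**}$ restricted to $MB(E,C)$ is pointwise $\sigma(C^{**},C^*)$ convergence on $E$.

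\emph{Reduction to unital.} First assume $u(1)=1$. Then $u$ is m.p. and unital, so by Proposition \ref{mpp} we have $\|u\|_{mb}=1$. Take $v_i\in MB(E,C)$ with $\|v_i\|_{mb}\le 1$ and $v_i\to u$ pointwise weak*. In particular $v_i(1)\to 1$ weakly in $C$. By Mazur's theorem (Remark \ref{mazur}) we may replace the net by convex combinations, which keeps the mb-norm bound, so that $\|v_i(1)-1\|\to 0$ in norm while pointwise weak* convergence is preserved. Replace $v_i$ by its selfadjoint part $(v_i+(v_i)_*)/2$; this still has mb-norm $\le 1$, since the $*$-operation preserves max norms.

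Now normalize. Set $a_i=v_i(1)$, which is invertible and close to $1$ for large $i$, and put $w_i=a_i^{-1/2}v_i(\cdot)a_i^{-1/2}$. Then $w_i$ is unital and $\|w_i\|_{mb}\le \|a_i^{-1}\|\to 1$. By Remark \ref{nor} applied to $[w_i]_\C$ (unital, norm $\le 1+\delta_i$), we get near positivity but not exact positivity. To fix this, use a state $f$ on $E$ and set
$$u_i=(1+\delta_i)^{-1}\bigl(w_i+\delta_i f(\cdot)1\bigr),$$
adjusting the constants so that $\|u_i\|_{mb}\le 1$, $u_i(1)=1$, and $u_i\to u$ pointwise weak*. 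I expect this exact-positivity step to be the main obstacle. A unital map with mb-norm exactly $1$ is m.p., by Remark \ref{nor} combined with Remark \ref{R5-}. So we want to rescale $w_i$ to mb-norm $1$ while keeping it unital. One could instead first perturb $u$ toward $(1-\varepsilon)u+\varepsilon f(\cdot)1$, whose $u_\C$ lies strictly inside the positive cone, but the mb-ball geometry is delicate. An alternative route is to run the duality argument directly on the convex weak*-closed cone $MP(E,C^{**})$, using the description from \cite{157} of the dual of $MB(E,D)$ restricted to positive maps, that is, an m.p. analogue of Theorem \ref{d6} (v). This would give the net of m.p. maps with $\|u_i\|\le 1$ directly by a bipolar argument, and it is the route I would try if the perturbation fails.

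\emph{General case.} If $u(1)=b\in C$ with $0\le b\le 1$, write $u_\varepsilon=u+\varepsilon f(\cdot)1$ and set $b_\varepsilon=u_\varepsilon(1)=b+\varepsilon\in C$, which is invertible. Then $v=b_\varepsilon^{-1/2}u_\varepsilon b_\varepsilon^{-1/2}$ is unital and m.p. Approximate $v$ by unital m.p. maps $v_i:E\to C$, using the hypothesis $b_\varepsilon\in C$, and set
$$u_{i,\varepsilon}=(1+\varepsilon)^{-1}\, b_\varepsilon^{1/2}v_i(\cdot)b_\varepsilon^{1/2}.$$
These are m.p. with values in $C$. Since $u_{i,\varepsilon}$ is c.p., $\|u_{i,\varepsilon}\|=\|u_{i,\varepsilon}(1)\|\le (1+\varepsilon)^{-1}\|b_\varepsilon\|\le 1$. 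Finally, take a diagonal net over $(i,\varepsilon)$ to obtain the required pointwise weak* convergence to $u$.
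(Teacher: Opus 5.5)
Your reduction of the general case to the unital one is the same ``well known trick'' the paper uses: $u_\varepsilon=u+\varepsilon f(\cdot)1$, then conjugation by $u_\varepsilon(1)^{\mp 1/2}$. Your extra factor $(1+\varepsilon)^{-1}$ is a useful addition, since it secures $\|u_{i,\varepsilon}\|\le 1$, which the paper leaves implicit. The difficulty is the unital case, and there your argument has a genuine gap.

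The paper does not derive the unital case from Theorem~\ref{d6}(v). It quotes \cite[Cor.~3.9]{157}, which is an order-theoretic (m.p.) form of maximal local reflexivity. The isometric identity $MB(E,C)^{**}=MB(E,C^{**})$ controls only the unit ball of $MB$, not the cone of m.p. maps. After Goldstine, Mazur, symmetrization and normalization you have unital selfadjoint maps $w_i:E\to C$ with $\|w_i\|_{mb}\le 1+\delta_i$, and nothing that makes them m.p.

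Your formula $u_i=(1+\delta_i)^{-1}(w_i+\delta_i f(\cdot)1)$ is not justified. The triangle inequality gives only $\|u_i\|_{mb}\le (1+2\delta_i)/(1+\delta_i)>1$; in general, no convex combination of $w_i$ with a norm-one map can be certified this way to have norm at most $1$. So the criterion ``unital with mb-norm $1$ implies m.p.'' (Remarks~\ref{nor} and \ref{R5-}) cannot be invoked.

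Arguing with positivity directly does not help either. The bound $\|[w_i]_\C\|\le 1+\delta_i$ yields only $[w_i]_\C(x)\ge -\tfrac{\delta_i}{2}\|x\|\,1\otimes 1$ for $x\ge 0$ in $\C\otimes_{\max}E$. The correction term $\delta_i\,(Id\otimes f)(x)\otimes 1$ is in general not bounded below by any multiple of $\|x\|\,1\otimes 1$; take $x=a\otimes 1$ with $a\ge 0$ non-invertible. So it cannot absorb that negative part.

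To rescue the perturbation route you would need an order-domination estimate: $x\le K_E\,(Id\otimes f)(x)\otimes 1$ in $\C\otimes_{\max}A$ for all positive $x\in\C\otimes E$, with $f$ a faithful state on $E$. Granting that, $\C$-bimodularity of $[w_i]_\C$ would finish the job. You conjugate $x$ by $\bigl((Id\otimes f)(x)+\eta\bigr)^{-1/2}\otimes 1$, which stays inside $\C\otimes E$. This shows $w_i+\tfrac{K_E\delta_i}{2}f(\cdot)1$ is positive on $\C\otimes_{\max}E$, hence m.p. by Remark~\ref{R5-}, and you then renormalize. Finite dimensionality gives such a constant for the c.p. order (Choi--Effros), but the cone induced by $\C\otimes_{\max}A$ is a different one, and nothing in the paper provides the estimate there.

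The alternative you mention, a bipolar argument on the cone $MP(E,C^{**})$ using a description of the relevant dual cone, is essentially what \cite[Cor.~3.9]{157} supplies. You only name it; you do not carry it out. As it stands, your proposal proves the lemma only modulo its unital case, which is precisely its substance.
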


\begin{proof} For unital maps this is Corollary 3.9. in \cite{157}.
The case when $u(1)\in C$ follows by a well known trick.
Let $\vp>0$ and let $f$ be a state on $E$. Let
$u_\vp(.)= u(.)+ \vp f(.) 1$. Note
$u_\vp(1)\in C$ and is invertible. Let
$v_\vp(.)= u_\vp(1)^{-1/2} u_\vp(.) u_\vp(1)^{-1/2}$ so that $v_\vp(1)=1$.
Applying the unital case,
we have a net $v_i$ of $C$-valued unital m.p. maps tending pointwise
weak* to $v_\vp$. Then
$u_i(.)=u_\vp(1)^{1/2} v_i(.) u_\vp(1)^{1/2}$ tends weak* to $u_\vp$.
Since $\lim_{\vp\to 0}\|u-u_\vp\|=0$ the announced result follows.
\end{proof}

The next statement makes more precise the relationship between
the mb maps and the decomposable ones in the MLP context.

\begin{thm}\label{T1} Let  $  A,C $ be  $C^*$-algebras. Assume $A$ has the MLP and let $E\subset A$ be a f.d. 
operator system.
Let $u: E \to C$. The following are equivalent.
  \item{(i)} We have $\|u\|_{mb} \le 1$.
    \item{(ii)} For any f.d. operator system $E_1$ with $E\subset E_1 \subset A$ and any $\vp>0$
    there is a map $  u_1: E_1 \to C$ extending $u$ and such that $\|u_1\|_{dec} <1+\vp$.
\end{thm}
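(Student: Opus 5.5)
The plan is to prove the two implications separately. For (ii)$\Rightarrow$(i) I use only the $2\times2$ matrix picture \eqref{d12} of the dec-norm and a concrete description of positivity in $\C\otimes_{\max}A$; the MLP is not needed here. For (i)$\Rightarrow$(ii) I combine the extension Theorem \ref{ext2}, the bidual description of Theorem \ref{d2}, and the approximation Lemma \ref{L6}, followed by a Mazur-type perturbation.

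\emph{(ii)$\Rightarrow$(i).} Let $t\in \C\otimes_{alg}E$ with $\|t\|_{\C\otimes_{\max}A}<1$. It suffices to show $\|(Id\otimes u)(t)\|_{\C\otimes_{\max}C}\le 1+\vp$ for every $\vp>0$.
\begin{itemize}
\item By the usual $2\times2$ trick, write $\|t\|<1$ as the positivity of $T=\begin{pmatrix}1&t\\ t^*&1\end{pmatrix}$ in $M_2(\C)\otimes_{\max}A$. Since $T$ is then invertible positive, $T-\delta=y^*y$ for some $\delta>0$ and some $y$ in the algebraic tensor product (approximate $(T-\delta)^{1/2}$ by algebraic tensors).
\item Write $y=\sum_k c_k\otimes a_k$ with $c_k\in M_2(\C)$ and $a_k\in A$. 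Let $E_1$ be the f.d. operator system spanned by $E$ and the products $a_k^*a_l$. Then
$$T-\delta=\sum_{k,l}c_k^*c_l\otimes a_k^*a_l,$$
and the matrix $[a_k^*a_l]$ is positive in $M_n(E_1)$.
\item By (ii), take $u_1$ extending $u$ to $E_1$ with $\|u_1\|_{dec}<1+\vp$. Using Remark \ref{R6} (up to a harmless $\vp$-rescaling), choose the associated $V\in CP(E_1,M_2(C))$ with unital diagonal entries.
\item Then $[V(a_k^*a_l)]\ge0$ in $M_n(M_2(C))$. Factoring this positive matrix as $z^*z$ shows that $\sum c_k^*c_l\otimes V(a_k^*a_l)$ is positive in $M_2(\C)\otimes_{\max}M_2(C)$.
\item Adding back $\delta\otimes V(1)$ and compressing appropriately, I obtain the positivity of $\begin{pmatrix}1&(Id\otimes u)t\\ \ast&1\end{pmatrix}$ up to the factor $(1+\vp)$. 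This gives the required bound and hence $\|u\|_{mb}\le1$.
\end{itemize}

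\emph{(i)$\Rightarrow$(ii).} Fix $E\subset E_1$ and $\vp>0$.
\begin{itemize}
\item Reduce to a separable $C^*$-subalgebra containing $E_1$ which still has the MLP, so that Theorem \ref{ext2} applies. It gives an extension $u_1:E_1\to C$ of $u$ with $\|u_1\|_{mb}\le 1+\vp$.
\item By Theorem \ref{d2}, $i_Cu_1$ extends to a decomposable map $A\to C^{**}$ of dec-norm $\|u_1\|_{mb}$. Hence there are c.p. maps $S_1,S_2$ making $V:E_1\to M_2(C^{**})=M_2(C)^{**}$ as in \eqref{d12} completely positive, and by Remark \ref{R6} we may take $S_1,S_2$ unital.
\item Since $V$ is defined on all of $A$, it is m.p. by Corollary \ref{c1}, hence its restriction to $E_1$ is m.p. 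Since $V(1)=1\in M_2(C)$, Lemma \ref{L6} (applied with $M_2(C)$ in place of $C$) yields a net of m.p. maps $V_i:E_1\to M_2(C)$ with $\|V_i\|\le 1+\vp$ converging pointwise weak* to $V$.
\item The compressions of $V_i$ give decompositions showing that the off-diagonal corners $w_i:E_1\to C$ satisfy $\|w_i\|_{dec}\le1+\vp$. Moreover $w_i\to u_1$ pointwise in $\sigma(C^{**},C^*)$, that is, weakly in $C$.
\item Since $E_1$ is finite dimensional, Mazur's theorem (Remark \ref{mazur}) produces convex combinations $w$ of the $w_i$ with $\|u_1-w\|$ arbitrarily small. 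By convexity, $\|w\|_{dec}\le 1+\vp$ still holds.
\end{itemize}

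The last step is to absorb the error $r=u_1-w$, which is small in norm. Write $r=\sum f_k(\cdot)\,c_k$ using a basis of $E_1$. Each functional $f_k$ extends by Hahn--Banach to $A$ and is then a combination of four positive functionals, so $\|r\|_{dec}\le K(E_1)\|r\|$ with $K(E_1)$ depending only on $E_1$. Making $\|r\|$ small then gives $\|u_1\|_{dec}<1+2\vp$.

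I expect this approximation step (the Lemma \ref{L6} net, the Mazur argument, and the finite-dimensional dec-perturbation) to be the main obstacle. The delicate points are that the corners of the m.p. maps $V_i$ must retain a genuine dec-decomposition on $E_1$ itself, and that the $E_1$-dependent constant $K(E_1)$ is harmless precisely because $E_1$ is fixed before the approximation is chosen.
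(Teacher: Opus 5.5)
\textbf{Gap in (ii)$\Rightarrow$(i).} The problem is the very first step. Approximating $(T-\delta)^{1/2}$ by algebraic tensors does not give an exact identity $T-\delta=y^*y$. It only gives $T-\delta=y^*y+e$, with $e=e^*\in M_2(\C)\otimes_{alg}E_1$ small in the max-norm.

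To keep positivity after applying $Id\otimes V$, you would need $\|(Id\otimes V)(e)\|_{\max}\le\delta$. That is a max-norm estimate for a c.p.\ map defined only on the operator system $E_1$. It is exactly what you are trying to prove, and it fails in general (Remark \ref{R5}). The obvious bound $\|(Id\otimes V)(e)\|_{\max}\le \dim(E_1)\,\|V\|\,\|e\|_{\max}$ does not help either. $E_1$ is built from $y$, and $\dim(E_1)$ grows as the approximation tolerance shrinks, so the argument is circular.

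The fact you need is true, but it is a Positivstellensatz, not an approximation. The algebra $M_2(\C)\otimes_{alg}A$ is generated by elements $x$ for which $\|x\|^2-x^*x$ is a hermitian square. Hence the cone $\Sigma^2$ of finite sums of hermitian squares is Archimedean. An Eidelheit--Kakutani separation followed by the GNS construction then does the job: GNS yields commuting representations of $M_2(\C)$ and $A$, hence a representation dominated by the max-norm. This shows that every algebraic $h$ positive in $M_2(\C)\otimes_{\max}A$ satisfies $h+\eta\in\Sigma^2$ for all $\eta>0$. Apply this to $h=T-\delta_0$, where $T\ge\delta_0>0$; you get $T-\delta_0/2=\sum_i y_i^*y_i$ with $y_i$ algebraic. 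After that, your remaining steps go through: positivity of $[V(a_k^*a_l)]$, the factorization, and the compression. The result is a proof that uses (ii) for a single $E_1$.

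The paper avoids the issue entirely. It uses (ii) for all $E_1$ and all $\vp$, and takes a pointwise weak* limit of the $2\times 2$ c.p.\ maps. This gives a decomposable extension $\tilde u:A\to C^{**}$ of $i_Cu$ with $\|\tilde u\|_{dec}\le 1$, and the paper concludes by Theorem \ref{d1} and \eqref{e40}. Positivity preservation then comes from Proposition \ref{imp}, which is available only because the limit map is defined on all of $A$.

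\textbf{On (i)$\Rightarrow$(ii).} This direction is essentially the paper's argument: Theorem \ref{d2}, Lemma \ref{L6}, Mazur, and a finite-dimensional dec-perturbation. You pre-extend $u$ inside $C$ by Theorem \ref{ext2}, so that you only have to absorb $u_1-w$ on $E_1$ rather than re-impose the values on $E$. That is a legitimate simplification of the last step. Apply Theorem \ref{ext2} with $X=E_1$ directly. Passing to a separable $C^*$-subalgebra with the MLP is unnecessary, and it is unclear that such a subalgebra exists.

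There are two slips to fix.
\begin{itemize}
\item $V(1)$ has off-diagonal entries $u(1)$ and $u(1)^*$, so it is not $1$. It still lies in $M_2(C)$, which is all Lemma \ref{L6} needs.
\item Consequently $\|V_i\|$ is bounded only by $\|V(1)\|\ge 1+\|u(1)\|$, not by $1+\vp$. To control $\|w_i\|_{dec}$ you should control the diagonal corners instead. One way is to include $V_i(1)\to V(1)$ among the weak convergences used in the Mazur step.
\end{itemize}
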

\begin{proof} Assume (i). By Theorem \ref{d2}  (note $E_1$ has the MLP) there is 
$v: E_1 \to C^{**}$ extending $i_C u$ such that $\|v\|_{dec}\le \|v\|_{mb}\le 1$.
Whence  $C^{**}$-valued {\it unital} c.p. maps 
$S_1,S_2$
such that
$V: x\mapsto \left(\begin{matrix}
 S_1 (x) &  v(x)\\
 v(x^* )^* & S_2 (x)
\end{matrix}\right)$
is in $CP(E_1, C^{**})$ (see Remark \ref{R6}). Note 
$v(1)\in C$   since $v_{|E}=u$ and hence $V(1)\in M_2(C)$.
By Lemma \ref{L6} there is a net of m.p. maps $V_i: E_1\to M_2(C)$
with $\|V_i\|\le 1$ tending pointwise weak* to $V$.
In particular $(V_i)_{12} $ restricted to $E$ tends pointwise weak* to $ u$.
Passing to suitable convex hulls and using Mazur's theorem
we can find (possibly different) c.p. maps $V_i$ with $\|V_i\|\le 1$  for which
$(V_i)_{12} $ restricted to $E$ tends pointwise in norm to $ u$.
Choose then $i$ so that $\|  u - {(V_i)_{12}}_{|E} \|  <\vp$.
Let then $v'=  (V_i)_{12} :E_1 \to C$. We have $\|v'_{|E}-u\|< \vp$
  and 
$\|v'\|_{dec} \le 1 $ (since $\|V_i\|\le 1$). By the elementary perturbation argument
described in \cite[Rem. 3.4]{157} there is a map $v'': E_1\to C$ such that
$v''_{|E}=u$ with $\|v''\|_{dec} \le 1+f(\vp)$ where $f(\vp)\to 0$ when $\vp\to 0$. 
Thus we obtain (ii).\\
 Assume (ii). By a simple ultraproduct argument involving the net of maps
 $  u_1: E_1 \to C$ (indexed by f.d. subspaces $E_1$ with $E\subset E_1 \subset X$ and $\vp>0$)
 we obtain a map $\tilde u: A \to C^{**} $ with $\|\tilde u\|_{dec} \le 1$
 extending $i_C u: E \to C^{**} $. Equivalently $\|\tilde u\|_{mb} \le 1$ (see Theorem \ref{d1})
 and a fortiori $\|  i_C u \|_{mb}= \|\tilde u_{|E}\|_{mb} \le 1$. By \eqref{e40} $\|    u \|_{mb}\le 1$.
\end{proof}
We recall that a $C^*$-algebra $C$ has the WEP (in Lance's sense) if  and only if
 the min and max norms coincide on $\C \otimes C$  (see \cite[\S 9.3]{157}). By Kirchberg's fundamental theorem
 (see \cite[Th. 9.6]{157}) it is so when $C=B(H)$ or when $C$ is injective.\\
It is natural to wonder when the equality
$CB(E,C)=MB(E,C)$ holds isometrically for any $E$.
It is immediate that it does so if $\C\otimes_{\min} C= \C\otimes_{\max} C$,
or equivalently if $C$ has the WEP.
The converse is also true as the next statement shows. Note that in sharp contrast with
Theorems \ref{T1} and \ref{ext2}
 the assumption is on the range and not on the domain space.

\begin{thm}\label{wep} The following properties of a $C^*$-algebra  $C$ are equivalent.\\
{\rm (i)}  $C$ has the WEP.\\
{\rm (ii)} For any $C^*$-algebra $A$ and any o.s. $E\subset A$  we have
$CB(E,C)=MB(E,C)$   isometrically.\\
{\rm (iii)} For any pair $E ,F$ of f.d. o.s. with $E \subset F\subset \C$, any $\vp>0$ and
 any $u\in CB(E,C)$    there is   $\tilde u: F \to C^{**}$ with 
$\|\tilde u\|_{mb} \le (1+\vp)  \|  u\|_{cb}$ such that ${\tilde u}_{|E}=i_C u$.\\
{\rm (iv)} For any pair $E ,F$ of f.d. o.s. with $E \subset F\subset \C$, any $\vp>0$ and
 any $u\in CB(E,C)$    there is   $\tilde u: F \to C $ with 
$\|\tilde u\|_{cb} \le (1+\vp)  \|  u\|_{cb}$ such that ${\tilde u}_{|E}=  u$.\\
{\rm (v)} For any f.d. $E\subset \C$ and any $\vp>0$, any c.b. $u: E \to C$ admits an extension
$\tilde u: \C \to C$ with $\|\tilde u\|_{cb} \le  (1+\vp)  \|  u\|_{cb}$.\\
{\rm (vi)} Same as (iv) with $F$ restricted to be a subspace spanned by  a finite number of the free unitary generators of $\C$.
\end{thm}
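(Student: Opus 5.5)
Our plan is the cycle (i) $\Rightarrow$ (ii) $\Rightarrow$ (iii) $\Rightarrow$ (iv) $\Rightarrow$ (vi) $\Rightarrow$ (i), and then attach (v) via (v) $\Rightarrow$ (iv) $\Rightarrow$ (v).

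\emph{(i) $\Rightarrow$ (ii).} Recall that $\|u\|_{cb}\le\|u\|_{mb}$ always holds. Now assume $C$ has the WEP, so that $\C\otimes_{\min}C=\C\otimes_{\max}C$. The map $Id_\C\otimes u$ sends $\C\otimes_{\max}E$ into $\C\otimes_{\max}C=\C\otimes_{\min}C$. Its min-to-min norm is $\|u\|_{cb}$ by \eqref{e7}, and the max-norm on $\C\otimes E$ dominates the min-norm. Hence $\|u\|_{mb}\le\|u\|_{cb}$.

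\emph{(ii) $\Rightarrow$ (iii).} Here $\|u\|_{mb}=\|u\|_{cb}$, and $i_C$ is max-isometric, so $\|i_Cu\|_{mb}=\|u\|_{cb}$. The target $C^{**}$ is a von Neumann algebra, so Theorem \ref{ext1} extends $i_Cu$ to all of $\C$, hence to $F$, with the same mb-norm. No $\vp$ is even needed.

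\emph{(iii) $\Rightarrow$ (iv).} This is the main technical step: we must bring a $C^{**}$-valued extension down to $C$. We mimic the proof of Theorem \ref{T1}. The space $F\subset\C$ has the MLP, since $\C$ has the LP. We may enlarge $E,F$ to operator systems (adding the unit of $\C$) and use the usual $2\times2$ off-diagonal trick to reduce to self-adjoint or c.p. situations. Theorem \ref{d1} then gives a decomposable extension $V:\C\to M_2(C^{**})$ with c.p. corner matrix.

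Next we use Lemma \ref{L6}, which applies because $A=\C$ has the MLP. It approximates the c.p. corner map pointwise weak* by m.p. maps into $M_2(C)$. The hypothesis $V(1)\in M_2(C)$ needed by the lemma is arranged as in Theorem \ref{T1}: normalize the diagonal to be unital via Remark \ref{R6}, and let the corner at $1$ come from $u$ by including $1\in E$.

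We then pass to convex combinations via Mazur's theorem (Remark \ref{mazur}) to get norm approximation on the finite-dimensional $E$. The perturbation lemma \cite[Rem. 3.4]{157} corrects the restriction to equal $u$ exactly, at cost $1+\vp$. This yields $\tilde u:F\to C$ with $\|\tilde u\|_{dec}$, and hence $\|\tilde u\|_{cb}$, at most $(1+\vp)\|u\|_{cb}$. The delicate point is keeping the unit and normalization consistent when $u$ is only c.b. rather than c.p. I expect this bookkeeping to be the main obstacle.

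\emph{(iv) $\Rightarrow$ (vi).} This is trivial.

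\emph{(v) $\Rightarrow$ (iv).} Restrict the extension $\C\to C$ to $F$.

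\emph{(iv) $\Rightarrow$ (v).} Take an increasing sequence of finite-dimensional $F_n$ with dense union in the separable $\C$. Extend step by step with tolerances $\vp_n$ satisfying $\prod(1+\vp_n)\le1+\vp$. This is the standard inductive extension for operator spaces. Alternatively, we can use (iv) to obtain a net and then invoke the Arveson-type principle of Remark \ref{arv} applied to extensions.

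\emph{(vi) $\Rightarrow$ (i).} We use Kirchberg's criterion: it suffices to show $\|t\|_{\max}\le\|t\|_{\min}$ for $t=\sum_{j=0}^n U_j\otimes c_j\in\C\otimes C$, where $U_0=1$ and $U_1,\dots,U_n$ are free unitary generators. That this suffices is the known reduction in \cite[\S 9]{157} for the WEP. We take $E=\mathrm{span}\{U_j\}$ and consider the map $u:E^*\to C$ viewed via the tensor $t$, or more directly the standard duality $\|t\|_{\min}=\|\cdot\|_{cb}$. Realizing this through (vi), with $F$ spanned by more generators, gives extensions of controlled cb-norm. Pairing with the max-norm via maps on the span of generators then shows $\|t\|_{\max}\le(1+\vp)\|t\|_{\min}$.

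If this direct route is awkward, the fallback is to derive (vi) $\Rightarrow$ (ii) first, by approximating $u$ through generator spans and using that $\C$ has the LP/MLP. We then show (ii) $\Rightarrow$ (i) by taking $E=C$ with $u=Id_C$: (ii) gives $\|Id_C\|_{mb}=1$ regarded from $C\subset C$, which combined with $\C$-tensoring yields $\C\otimes_{\max}C=\C\otimes_{\min}C$ via \eqref{e7} and \eqref{e88} after choosing $E\subset B(H)$ with the WEP-trivial max-norm.
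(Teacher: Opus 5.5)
Your plan follows the paper's overall structure. The steps (i)$\Rightarrow$(ii)$\Rightarrow$(iii), (iv)$\Leftrightarrow$(v) by the inductive extension, and (iv)$\Rightarrow$(vi) are fine. Two steps do not hold up.

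\textbf{The main gap is (vi)$\Rightarrow$(i), the step that closes your cycle.} Your reduction to tensors $t=\sum_0^n U_j\otimes c_j$ is legitimate. After that, though, there is no argument. The map naturally attached to $t$ is defined on $E^*$, that is on $\min(\ell_\infty^{n+1})$, not on a subspace of $\C$. So (vi) cannot be applied to it as it stands. The phrase ``pairing with the max-norm via maps on the span of generators'' names no mechanism that turns a cb-controlled extension into a bound on $\|t\|_{\max}$. Your fallback (vi)$\Rightarrow$(ii) is likewise only asserted. Your (ii)$\Rightarrow$(i) step matches the paper: take $E=C\subset A=B(H)$ with $u$ the inclusion, and use $\C\otimes_{\max}B(H)=\C\otimes_{\min}B(H)$. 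But it helps only once you actually have (ii). The implication (vi)$\Rightarrow$(i) is a genuine theorem. The paper does not reprove it; it quotes (vi)$\Leftrightarrow$(i) from \cite[Th. 21.3]{157}. You must either cite it as well or supply a real proof.

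\textbf{Your (iii)$\Rightarrow$(iv) fails as designed.} Lemma \ref{L6} needs $V(1)\in M_2(C)$, and the off-diagonal entry of $V(1)$ is $\tilde u(1)$. In (iii) and (iv), $E$ is an arbitrary subspace (for instance $\CC U_1$), and $u$ is not defined at $1$. So $\tilde u(1)$ is only known to lie in $C^{**}$. ``Including $1\in E$'' means choosing a value $u(1)\in C$ with cb-control. That is itself an instance of the very extension problem you are trying to solve, so the argument is circular.

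The paper avoids operator systems and positivity altogether. Since $\C$ has the LP, the f.d. space $F\subset\C$ has the MLP. Theorem \ref{d6}, (i)$\Rightarrow$(v), then gives $MB(F,C)^{**}=MB(F,C^{**})$ isometrically. Hence $\tilde u$ is a pointwise weak$^*$ limit of a net in the $(1+\vp)\|u\|_{cb}$-ball of $MB(F,C)$. That net lies in the same cb-ball, because $\|\cdot\|_{cb}\le\|\cdot\|_{mb}$. Mazur's theorem on the f.d. space $E$ and the perturbation argument then finish the proof.

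If you prefer to keep your positivity route, put the unit on the diagonal instead of in $E$. Use the Paulsen system $\bigl(\begin{smallmatrix}\CC 1 & F\\ F^* & \CC 1\end{smallmatrix}\bigr)\subset M_2(\C)$ and the unital map that is the identity on the diagonal and $\tilde u$ off the diagonal. It is m.p.: it is the restriction of the c.p. Schur-type map built from the dec-decomposition of an extension of $\tilde u$ to $\C$. Lemma \ref{L6} applies to it, since $M_2(\C)$ has the LP.

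Finally, your alternative for (iv)$\Rightarrow$(v) via Remark \ref{arv} does not work. Arveson's principle concerns liftings, and pointwise limits of extensions land in $C^{**}$, not in $C$. The inductive argument is the one to use.
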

\begin{proof} 
We first give the simple  proof that (i) $\Leftrightarrow$ (ii), then   outline the proof of 
 the other equivalences.\\
(i) $\Rightarrow$ (ii) is obvious as we just observed. 
Conversely, if (ii) holds, we have $\|u\|_{mb}=\|u\|_{cb}$ for any $u: E \to C$. Assuming $C\subset B(H)$ we may apply this with
$E=C$,  $A=B(H)$, and $u(x)=x$ for $x\in E$. Then  $\|u\|_{mb}=\|u\|_{cb}=1$ tells us that 
the max-norm on $\C \otimes C$ is $\le $ the norm induced on it by the max-norm
on $\C \otimes B(H)$, which is equal to the min-norm by Kirchberg's theorem.
So the latter induced norm on $\C \otimes C$ is also the min-norm.  It follows that
the max-norm and the min-norm are identical on $\C\otimes C$.
Equivalently $C$ has the WEP, proving  (ii) $\Rightarrow$ (i). \\
Assume (ii). By Theorem \ref{ext1} any $u\in CB(E,C)=MB(E,C)$ admits an ``extension"
$\tilde u\in MB(F,C^{**})$ with $  \|\tilde u\|_{mb} \le (1+\vp)  \|  u\|_{cb}$,
whence (iii). Assume (iii). 
By   Theorem \ref{d6}, there is a net $(\tilde u_i)$
in the ball of radius $1+\vp$ in  $MB(F,C )$ tending
pointwise weak* to $\tilde u$. A fortiori $\|\tilde u_i\|_{cb} \le 1+\vp$.
Passing to convex combinations using Mazur's theorem
one gets a net $(u_i)$ in the same cb-ball such that the restrictions $({u_i}_{|E})$
converge pointwise in norm to $u$. Then a perturbation argument (as in the proof of Theorem \ref{T1})  leads to (iv). Assume (iv). Fix a f.d. $E\subset \C$ and  $u\in CB(E,C)$. 
Let $E=E_0 \subset \cdots E_n\subset E_{n+1}\subset \cdots  \subset \C$ be a sequence of f.d. subspaces with dense union $V=\cup E_n$. Let $\vp_n>0$
be such that $\prod (1+\vp_n) <1+\vp$. By applying (iv)  to each $E_n$ we find
a map $\tilde u: V \to C$ with $\|\tilde u\|_{cb} \le 1+\vp$ extending $u$.
We may then extend $\tilde u$ to the whole of $\C$ by density of $V$,
whence (v). Clearly (v) implies (iv) which implies (vi).
By \cite[Th. 21.3]{157} 
 (vi) and (i) are equivalent.
\end{proof}

                          \section{Duality}\label{dual}

                     The o.s. dual of an o.s. $X\subset A$ can be defined by an isometric embedding $f: X^* \to B(H)$ such that for any o.s. $Y$ and 
                     $s\in  Y \otimes_{alg} X^*  $ we have
                     $$\|s\|_{\min}=\|u_s\|_{cb}$$
                     where $u_s : X \to Y$ is the  linear map 
                     canonically associated to $s$, and the embedding $f$ is used
                     to define the min-norm of $s$. Thus, denoting by 
                     ${F(X,Y)}$ the subspace of finite rank maps in $CB(X,Y)$,
                     we have an isometric identity
                     $$Y \otimes_{\min} X^* =\ovl{F(X,Y)}^{cb} \subset CB(X,Y).$$

                      Since   duality  plays an important role
                       in o.s. theory it is natural to wonder what remains of it
                       in the ``maximal" context. While
                       the following naive statement seems less efficient (because of the restriction $Y\subset \C$), it does give
                       a partial duality in our new context.
                       
                     \begin{thm}
                     Let $X\subset A$ be a subspace of a $C^*$-algebra $A$, with
                     dual $X^*$ as above.
                     There is a (completely) isometric embeddding $X^*\subset L$ in a $C^*$-algebra $L=L_{X\subset A}$ (of the form $\ell_\infty(I,\C)$ for some suitably large index set $I$) such that for any $s\in  \C \otimes_{alg} X^*  $ with associated linear map 
                     $u_s : X \to \C$ we have
                     $$ \|s\|_{\max}=\|u_s\|_{mb} ,$$
                     and hence for any $Y\subset \C$  we have an isometric identity
                      $$Y \otimes_{\max} X^* =\ovl{F(X,Y)}^{mb} \subset MB(X,Y).$$
                     
                       \end{thm}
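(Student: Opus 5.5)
The natural choice is to let $I$ be the set of all maps $\xi: X\to \C$ with $\|\xi\|_{mb}\le 1$, let $L=\ell_\infty(I,\C)$, and define $f: X^*\to L$ by a ``dual'' construction. The issue is that a functional $\phi\in X^*$ must be sent to an element of $\C$ for each index, so I would rather index by pairs. Concretely, I would let $I$ be the set of all maps $\xi: X\to\C$ of finite rank with $\|\xi\|_{mb}\le 1$. I would write each such $\xi$ as $\xi=\sum_k \phi_k\otimes c_k$ and define the embedding on $\C\otimes X^*$ directly, not on $X^*$ alone. This suggests the cleaner route below: embed $X^*$ completely isometrically in some $\ell_\infty(I,\C)$ chosen so that the max norm on $\C\otimes_{alg} X^*$ is computed, via Theorem \ref{d6}(iii) applied to the family $D_i=\C$, as a supremum over $i$ of max norms in $\C\otimes_{\max}\C$.

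\textbf{Construction.} Since $X^*$ is an operator space, every f.d. subspace $G\subset X^*$ embeds in $B(\ell_2)$. I claim instead that the relevant embeddings are complete contractions $\alpha: X^*\to \C$ of the form $\alpha(\phi)=\sum_k \phi(x_k)c_k$, i.e.\ $\alpha=\hat t$ for a tensor $t=\sum_k x_k\otimes c_k\in X\otimes_{alg}\C$. I would take $I=\{t\in X\otimes_{alg}\C : \|t\|_{X\otimes_{\max}\C}\le 1\}$ and $f(\phi)=(\,(\phi\otimes Id)(t)\,)_{t\in I}$. For $s\in\C\otimes_{alg} X^*$, Theorem \ref{d6}(iii), using the LP of $\C$ (so $\C$ has the MLP), gives
$$\|(Id\otimes f)(s)\|_{\C\otimes_{\max} L}=\sup_{t\in I}\|\langle s,t\rangle\|_{\C\otimes_{\max}\C},$$
where $\langle s,t\rangle=\sum s_j\phi_j(x_k)\otimes c_k = \sum_k u_s(x_k)\otimes c_k$, after flipping the factors. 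By definition of the mb-norm and \eqref{e100} (with $D=\C$, and using commutativity of max), the supremum over $t$ in the unit ball of $X\otimes_{\max}\C$ of $\|(u_s\otimes Id)(t)\|_{\C\otimes_{\max}\C}$ is exactly $\|u_s\|_{mb}$. This yields $\|s\|_{\max}=\|u_s\|_{mb}$.

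\textbf{Checking the embedding.} It remains to check that $f$ is a complete isometry onto its image for the o.s. dual structure. The min norm on $M_n(L)$ is the sup over $t$ of $\|[(\phi_{ij}\otimes Id)(t)]\|_{M_n(\C)}$. Since $t\mapsto$ min norm is dominated by the max norm, each coordinate map is completely contractive. For the reverse inequality I would restrict to $t=x\otimes 1$ and $t$'s built from the unitary generators, using \eqref{e7}: the identity of $M_n$ factors through $\C$ completely contractively. For matrix norms one needs $t\in X\otimes\C$ with $\|t\|_{\max}$ controlled by $\|t\|_{\min}$. Here I would pick $t$ with $\C$-components in the C$^*$-algebra generated by finitely many generators, where the max and min norms are compared through a c.p. factorization of $M_n$ through $\C$ and Proposition \ref{imp}. \textbf{I expect this step to be the main obstacle}: showing that the norms $\|[\phi_{ij}]\|_{M_n(X^*)}=\|[\phi_{ij}]:X\to M_n\|_{cb}$ are recovered. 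The plan is to compose the cb map $X\to M_n$ with a u.c.p. lift $M_n\to\C$ and then with the quotient back. Because $M_n$ is nuclear, maps into it have equal cb and mb norms, and the lift is c.p. hence m.b. with the same norm (Proposition \ref{L1}).

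\textbf{Final identity.} The isometric identity $Y\otimes_{\max}X^*=\ovl{F(X,Y)}^{mb}$ for $Y\subset\C$ then follows because the max norm on $Y\otimes X^*$ is induced from $\C\otimes_{\max}L$ by definition. The corresponding maps $u_s$ take values in $Y$, and $\|u_s\|_{MB(X,Y)}=\|u_s\|_{MB(X,\C)}$ since $Y\subset\C$ is max-isometric (Remark \ref{R1}). Completing both sides finishes the proof.
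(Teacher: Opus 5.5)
Your proposal is correct and follows essentially the paper's route: you take the unit ball of $X\otimes_{\max}\C$ as the index set, use coordinate maps $v_t(\phi)=(\phi\otimes Id)(t)$ with $\|v_t\|_{cb}=\|t\|_{\min}\le 1$, and obtain $\|s\|_{\max}=\|u_s\|_{mb}$ from Theorem \ref{d6}(iii) for $\C$ followed by a flip. The step you flag as the main obstacle is exactly the paper's use of a max-isometric copy of $M_n$ in $\C$. Your u.c.p. lift $\sigma: M_n\to\C$ of a quotient map $q:\C\to M_n$ is such a copy, by Proposition \ref{imp} (or \ref{L1}) together with $q\sigma=Id$. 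So taking $t=(Id\otimes\sigma)(t')$, where $t'\in X\otimes M_n$ norms $[\phi_{ij}]$, gives the complete isometry directly.
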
 
                       \begin{proof}
                       Let $I$ denote the unit ball of $\C \otimes_{\max} X$.
                       Any $t\in I$ defines a linear map $v_t: X^* \to \C$.
                       Let $j: X^* \to \ell_\infty(I,\C)$ be the map taking
                       $\xi \in X^*$ to $(v_t(\xi))_{t\in I} $. Since $\|v_t\|_{cb} =\|t\|_{\min} \le \|t\|_{\max} \le 1$ we have $\|j\|_{cb}\le 1$. 
                       Moreover since
                       $M_n$ has the MLP (and is a quotient of $\C$), there is a max-isometric imbedding $M_n\subset \C$   for any $n$, so we may use the $v_t$'s with range in such copies of $M_n$ to show that
                         the map $j$
                       is completely isometric on the dual $X^*$
                       (in the o.s. sense) of $X$.\\
                       Let $s\in  \C \otimes_{alg} X^*$.
                       Then since $\C$ has the LP we know
                       that $\C \otimes_{\max} \ell_\infty(I,\C) \subset 
                       \ell_\infty(I, \C \otimes_{\max} \C)$ isometrically
                       (see (iii) in Th. \ref{d3}) and hence
                       $$\|s\|_{\max} = \sup_{t\in I} \|[Id \otimes v_t](s) \|_{\max}= \sup_{t\in I}\|[Id \otimes u_s](t) \|_{\max}
                        $$
                       where for the last equality we used the
                      fact that  $[Id \otimes v_t](s)$ and $[Id \otimes u_s](t)$ coincide
                      after flipping the factors.
                       Whence  $ \|s\|_{\max}=\|u_s\|_{mb} $. Since (by definition)
                       $Y \otimes_{\max} X^* \subset \C \otimes_{\max} X^*$
                       the rest is clear. \end{proof}
                       
\begin{rem} Let $X\subset A $ and $Y\subset B$ be o.s.
with duals $X^*\subset L_{X\subset A}$  and $Y^*\subset L_{Y\subset B}$
as above. 
For any $u\in MB(X,Y)$, we have $\|u^*: Y^* \to X^*\|_{mb} \le \|u\|_{mb}$.
Indeed, this boils down to the fact that
for any $w: Y \to \C$ of finite rank we have $\|wu\|_{mb} \le \|w\|_{mb} \|u\|_{mb}$.\\
More precisely, 
let  $R(u): MB(Y,\C) \to MB(X,\C)$ be defined by $R(u)(v)=vu$.
Then $$\|u^*: Y^* \to X^*\|_{mb}=\|R(u):MB_0(Y,\C) \to MB_0(X,\C)\|,$$
where $MB_0(X,\C)$ denotes the closure in $MB(X,\C)$ of the set of finite rank
maps.
\end{rem}

\begin{rem} In Kirchberg's reformulation, the Connes embedding problem
asks whether the min and max norms are equivalent
on $\C \otimes \C$. If it was the case then,
for any $L= \ell_\infty(I,\C) $,  the max norm on $L\otimes \C$
would be also equal to the min norm, so that 
we would have always $X^*\otimes_{\max} \C= X^*\otimes_{\min} \C$, and hence the ``dual" embeddings $X^*\subset L_{X\subset A}$ 
 and $Y^*\subset L_{Y\subset D}$ would give us
 $\|u^*: Y^* \to X^*\|_{mb}= \|u^*: Y^* \to X^*\|_{cb}$.
        \end{rem}

\section{ Maximally bounded Schur mutipliers }\label{schur}
                       
                       Let $G$ be a discrete group.
                       We denote by $\lambda_G$ and $\rho_G$  (and sometimes simply by  $\lambda$ and $\rho$ when the choice of $G$ is clear) 
                       respectively the left and right regular representations of $G$ 
                       on $\ell_2(G)$.
                        We denote by $L(G)$ the von Neumann algebra generated by
                  $\lambda_G$, i.e. $L(G)=\lambda_G(G)''\subset B(\ell_2(G) )$. The $C^*$-algebra generated by a unitary representation $\pi$ is usually denoted by $C^*_\pi(G)$, for instance
                       $C^*_\lambda(G) \subset B(\ell_2(G))$
                       and $C^*_\rho(G) \subset B(\ell_2(G))$.
               When $\pi_u$ is the universal representation of $G$,
               we set $C^*(G)=C^*_{\pi_u}(G)$.
               We denote by  $B(G)$ the space formed of the functions on $G$ that are coefficients of unitary representations, i.e. $\phi\in B(G)$ if there is a unitary representation $\pi: G \to B(H_\pi)$ and $\xi,\eta\in H_\pi$ such that
               $\phi(.)=\langle \xi, \pi(.)\eta\rangle$. We set
               $$\|\phi\|_{B(G)} =\inf\{\|\xi\|\|\eta\| \}$$
               where the inf runs over all $\pi$ and all possible pairs $\xi,\eta\in H_\pi$. As is well known   $B(G)$ can be identified with the dual of
               $C^*(G)$. When $\phi: G \to \CC$ is finitely supported
               we may use $\pi=\lambda_G$ to compute its norm in
               $B(G)$ so that
               \begin{equation}\label{eq1}
               \|\phi\|_{B(G)}=\inf\{\|f\|_2 \|g\|_2\mid \phi= f*g\}
               \end{equation}
               where the inf runs over all factorizations of $\phi$ of the form
               $\phi= f*g$ with $f,g\in \ell_2(G)$.
               
               We denote by $\CC[G]$ the group algebra of $G$, with canonical basis $(\d_t)_{t\in G}$.
               By a (Schur or Herz-Schur) multiplier one usually means
               a transformation $M_\phi: \CC[G] \to \CC[G]$ associated to a function $\phi: G \to \CC$ that takes $\d_t$ to $\phi(t) \d_t$. We have a natural map $\CC[G] \to C^*_{\pi}(G)$ taking $\d_t$ to $\pi(t)$, and 
               $C^*_{\pi}(G)$ is the closed span of the range of the latter map.                           
                  
                  Let $X\subset A$ be an o.s. containing  $\CC[G]$
                  naturally as a dense subset, and let $Y\subset B$ be another o.s.  containing $\CC[G]$ ($A,B$ being $C^*$-algebras).
                  We say that $M_\phi$ is bounded (resp. c.b., rresp. m.b.) from $X$ to $Y$
                  if it extends to a bounded (resp. c.b., rresp. m.b.) linear map from $X$ to $Y$. In that case, we will denote (albeit abusively) by $M_\phi:X \to Y$ the resulting extended map.
                  For instance the c.b. multipliers
                  from $C^*_\lambda(G)$ to itself are characterized by
                  a celebrated theorem due to Bo$\dot{z}$ejko and Fendler \cite{[BF]}.
                  The m.b. analogue is as follows:
                  
                  \begin{pro}\label{pp1}  A multiplier  $M_\phi: \CC[G] \to \CC[G]$ 
                     is m.b.
                  on $C^*_\lambda(G)$ if and only if  the function $\phi: G \to \CC$
               is in the space $B(G)$
               and $\|M_\phi: C^*_\lambda(G) \to C^*_\lambda(G)\|_{mb}=\|\phi\|_{B(G)}$.
                  \end{pro}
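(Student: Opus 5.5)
Both inequalities can be handled by the maps $\pi\mapsto\lambda\otimes\pi$ and the Fell absorption principle, with the embedding $C^*_\lambda(G)\subset B(\ell_2(G))$ doing the work in the necessity direction.

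\emph{Sufficiency: $\|M_\phi\|_{mb}\le\|\phi\|_{B(G)}$.} Let $\phi(t)=\langle\xi,\pi(t)\eta\rangle$ with $\pi$ a unitary representation on $H_\pi$. By Fell's absorption principle there is a unitary $W$ on $\ell_2(G)\otimes H_\pi$ with $W(\lambda(t)\otimes 1)W^*=\lambda(t)\otimes\pi(t)$. This gives a $*$-homomorphism $\sigma: C^*_\lambda(G)\to C^*_\lambda(G)\otimes_{\min} B(H_\pi)$ with $\sigma(\lambda(t))=\lambda(t)\otimes\pi(t)$. We then write
\[
M_\phi(x)=(I\otimes \eta^*)\,\sigma(x)\,(I\otimes\xi)
\]
on $\CC[G]$, hence on $C^*_\lambda(G)$ by continuity (with the usual conjugate-linear convention for $\xi$). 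The right-hand side is a $*$-homomorphism followed by the map $y\mapsto V^*yW'$ with $\|V\|\|W'\|=\|\xi\|\|\eta\|$. By the polarization/$2\times 2$ trick, such a map is decomposable with $\mathrm{dec}$-norm at most $\|\xi\|\|\eta\|$: use $S_1=V^*\sigma(\cdot)V$ and $S_2=W'^*\sigma(\cdot)W'$, and after rescaling assume $\|\xi\|=\|\eta\|$. It is defined on the whole $C^*$-algebra $C^*_\lambda(G)$, so Proposition \ref{L1} gives $\|M_\phi\|_{mb}\le\|\phi\|_{B(G)}$. One check is needed: the map lands in $C^*_\lambda(G)$ rather than only in $B(\ell_2(G))$. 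It is enough to estimate $\|\cdot\|_{mb}$ as a map into $C^*_\lambda(G)$, so we apply the dec-estimate with target $B$ equal to the $C^*$-algebra generated by $\sigma$, namely $C^*_\lambda(G)\otimes_{\min}B(H_\pi)$. Compression by vectors is c.p. from there into $C^*_\lambda(G)$, since slice maps of the form $\mathrm{id}\otimes\omega$ with $\omega$ a vector state map the minimal tensor product into $C^*_\lambda(G)$. Thus $M_\phi$ is a combination of c.p. maps $C^*_\lambda(G)\to C^*_\lambda(G)$ of the required total size.

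\emph{Necessity: $\|\phi\|_{B(G)}\le\|M_\phi\|_{mb}$.} Take $D=C^*_\rho(G)$, or $D=\C$ composed with a quotient if we insist on the definition; \eqref{e100} allows any $D$. The left and right regular representations commute, so $\rho(s)\otimes\lambda(t)\mapsto\rho(s)\lambda(t)$ extends to a representation of $C^*_\rho(G)\otimes_{\max}C^*_\lambda(G)$. Consider the element $T=\sum_t a_t\,\rho(t)\otimes\lambda(t)$ with $a$ finitely supported, and apply $\mathrm{id}\otimes M_\phi$. This shows that $\phi\mapsto\sum_t\phi(t)a_t$ is controlled, and we expect to extract the bound
\[
\Big|\sum_t \phi(t)a_t\Big|\le \|M_\phi\|_{mb}\,\Big\|\sum_t a_t\,\rho(t)\otimes\lambda(t)\Big\|_{\max}.
\]
The vector-state step goes as follows. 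In the commuting pair $(\rho,\lambda)$ on $\ell_2(G)$, the vector state at $\delta_e$ applied to $\rho(t)\lambda(t)$ gives $\langle\delta_e,\rho(t)\lambda(t)\delta_e\rangle=1$ for all $t$, since $\rho(t)\lambda(t)\delta_e=\delta_{tet^{-1}}=\delta_e$ (up to the convention $\rho(t)\delta_s=\delta_{st^{-1}}$). So the functional $\omega(\rho(s)\otimes\lambda(t))=\delta_{s,t}$ is a state on the max tensor product, and $\omega\big((\mathrm{id}\otimes M_\phi)T\big)=\sum_t\phi(t)a_t$, which yields the displayed inequality. It then remains to show that $\big\|\sum_t a_t\,\rho(t)\otimes\lambda(t)\big\|_{\max}\le\|\sum_t a_t U_t\|_{C^*(G)}$. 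This holds because the diagonal map $t\mapsto\rho(t)\otimes\lambda(t)$ is a unitary representation of $G$ into the max tensor product, so it extends to a $*$-homomorphism from $C^*(G)$. By the duality $B(G)=C^*(G)^*$, taking the sup over $a$ gives $\|\phi\|_{B(G)}\le\|M_\phi\|_{mb}$.

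The main obstacle is the sufficiency direction's range issue, i.e.\ ensuring the dec-decomposition stays inside $C^*_\lambda(G)$ and not merely $B(\ell_2(G))$ or $L(G)$. The slice-map argument above should settle it. The necessity direction is short once the commuting pair $(\rho,\lambda)$ and the state at $\delta_e$ are in place.
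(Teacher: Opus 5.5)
Your proposal is correct and follows essentially the paper's route. For $\|M_\phi\|_{mb}\le\|\phi\|_{B(G)}$ you use Fell absorption together with the vector functional $T\mapsto\langle \xi, T\eta\rangle$. For the reverse inequality you use the commuting pair $(\lambda,\rho)$, the vector state at $\delta_e$ and $B(G)=C^*(G)^*$. The paper obtains that reverse inequality from Theorem \ref{Tmul} composed with the quotient map $C^*(G)\to C^*_\lambda(G)$, whereas you run the same computation directly in $C^*_\rho(G)\otimes_{\max}C^*_\lambda(G)$. Your passage through decomposability and Proposition \ref{L1}, with the slice-map check that all maps land in $C^*_\lambda(G)$, is a clean way to make the paper's ``reasoning as above'' precise, since the Fell embedding takes values in a minimal rather than a maximal tensor product.
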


                  More generally we have:

                  \begin{thm}\label{Tmul}  The following properties of a multiplier  $M_\phi: \CC[G] \to \CC[G]$ 
                  are equivalent.
                  \begin{itemize}
                  \item[{\rm (i)} ] $M_\phi$ is m.b. from $C^* (G)$ to $C^*_\lambda(G)$,
                  \item[{\rm (ii)} ] $M_\phi$ is m.b. from $C^* (G)$ to $L(G)$,
                   \item[{\rm (iii)} ] $M_\phi$ is m.b. from $C^* (G)$ to $C^* (G)$,
                   \item[{\rm (iv)} ] $\phi \in B(G)$.
                   \end{itemize}
                   Moreover in all cases the m.b. norm is equal to $\|\phi\|_{B(G)}$.
\end{thm}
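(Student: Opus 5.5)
We will prove the chain (iv) $\Rightarrow$ (iii) $\Rightarrow$ (i) $\Rightarrow$ (ii) $\Rightarrow$ (iv), tracking norms at each step so that all the m.b. norms equal $\|\phi\|_{B(G)}$.

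\emph{(iv) $\Rightarrow$ (iii).} Assume $\phi(t)=\langle \xi,\pi(t)\eta\rangle$ with $\|\xi\|\|\eta\|$ close to $\|\phi\|_{B(G)}$. Consider the $*$-homomorphism $\Delta: C^*(G)\to C^*(G)\otimes_{\max} C^*_\pi(G)$ given by $\Delta(\pi_u(t))=\pi_u(t)\otimes \pi(t)$; it exists by universality, since $t\mapsto \pi_u(t)\otimes\pi(t)$ is a unitary representation. Compose with $Id\otimes \omega_{\xi,\eta}$, where $\omega_{\xi,\eta}(x)=\langle \xi, x\eta\rangle$. Then $M_\phi=(Id\otimes\omega_{\xi,\eta})\circ\Delta$. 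By polarization, $\omega_{\xi,\eta}$ is a combination of positive functionals. More precisely, the map $x\mapsto \omega_{\xi,\eta}(x)$ is decomposable with $\|\cdot\|_{dec}\le\|\xi\|\|\eta\|$, via the $2\times 2$ matrix with diagonal entries $\omega_{\xi,\xi},\omega_{\eta,\eta}$, after rescaling so that $\|\xi\|=\|\eta\|$. Hence $Id\otimes\omega_{\xi,\eta}$ is decomposable on the max tensor product by Proposition \ref{imp} applied to Haagerup's $2\times2$ c.p. matrix, with dec-norm $\le\|\xi\|\|\eta\|$. Since $\Delta$ is a $*$-homomorphism defined on the whole $C^*$-algebra $C^*(G)$, it is c.p. with mb-norm $1$ by Proposition \ref{L1}. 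Combining with \eqref{e21} and Proposition \ref{L1} gives $\|M_\phi\|_{mb}\le\|\phi\|_{B(G)}$ as a map $C^*(G)\to C^*(G)$.

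\emph{(iii) $\Rightarrow$ (i) $\Rightarrow$ (ii).} The quotient map $C^*(G)\to C^*_\lambda(G)$ is a $*$-homomorphism on a full $C^*$-algebra, hence has mb-norm $\le 1$ by Proposition \ref{L1}. Similarly, the inclusion $C^*_\lambda(G)\subset L(G)$ is a $*$-homomorphism defined on all of $C^*_\lambda(G)$, so it also has mb-norm $1$; note that the mb-norm of a map into $C^*_\lambda(G)$ is computed with $C^*_\lambda(G)$ as its own ambient algebra. Composing via \eqref{e21} gives each implication without increasing norms.

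\emph{(ii) $\Rightarrow$ (iv).} This is the main step. By Theorem \ref{d1} with $E=A=C^*(G)$ and $M=L(G)$, $M_\phi$ is decomposable with $\|M_\phi\|_{dec}=\|M_\phi\|_{mb}=:c$, the infimum being attained. So there are c.p. maps $S_1,S_2: C^*(G)\to L(G)$ with norms $\le c$ such that the map $V$ of \eqref{d12} is c.p. into $M_2(L(G))$. We now pass to functions via the trace. Let $\tau$ be the canonical trace and define $\psi_j(t)=\tau(S_j(\pi_u(t))\lambda(t)^*)$. The key observation is that $t\mapsto \tau(T(\pi_u(t))\lambda(t)^*)$ is positive definite whenever $T$ is c.p. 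Indeed, $\sum \bar a_s a_t\,\tau(T(\pi_u(s^{-1}t))\lambda(s^{-1}t)^*)$ can be written as $\langle [T(\pi_u(s)^*\pi_u(t))]\,\cdot\,,\cdot\rangle$ paired against $\lambda(s)\delta_e$-type vectors; more precisely it equals $\sum_{s,t}\langle T(\pi_u(s)^*\pi_u(t))\lambda(t)^*\delta_e,\lambda(s)^*\delta_e\rangle \bar a_s a_t$ up to conventions, and this is $\ge0$ by complete positivity. Applying this to $V$ gives that the $2\times2$ kernel
$$\begin{pmatrix}\psi_1 & \phi\\ \check\phi^* & \psi_2\end{pmatrix}$$
is positive definite on $G$, where $\tau(M_\phi(\pi_u(t))\lambda(t)^*)=\phi(t)$. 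Moreover $\psi_j(e)=\tau(S_j(1))\le c$. A positive definite $2\times2$ matrix of functions arises from a GNS construction: there is a representation $\pi$ and vectors $\xi,\eta$ with $\psi_1(t)=\langle\xi,\pi(t)\xi\rangle$, $\psi_2(t)=\langle\eta,\pi(t)\eta\rangle$, and $\phi(t)=\langle\xi,\pi(t)\eta\rangle$. Hence $\|\phi\|_{B(G)}\le\|\xi\|\|\eta\|\le\sqrt{\psi_1(e)\psi_2(e)}\le c$.

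The obstacle is to get the conventions in the positivity computation right, in particular the placement of $\lambda(t)^*$ versus $\lambda(t)$ and of the involution $t\mapsto t^{-1}$, and to check that the off-diagonal entry of the GNS kernel is exactly $\phi$ rather than $\bar\phi$ or $\check\phi$; any of these variants would still give the same $B(G)$-norm. Proposition \ref{pp1} then follows in the same way, using a lift to $C^*(G)$, Remark \ref{R2}, and the (ii) $\Rightarrow$ (iv) argument.
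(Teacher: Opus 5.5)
Your proof is correct, and for (ii) $\Rightarrow$ (iv) it takes a genuinely different route from the paper.

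\emph{Where you agree with the paper.} For (iv) $\Rightarrow$ (iii) and for (iii) $\Rightarrow$ (i) $\Rightarrow$ (ii) you follow the paper: the coproduct $t\mapsto\pi_u(t)\otimes\pi(t)$, then the slice by $\langle\xi,\cdot\,\eta\rangle$, then composition with the $*$-homomorphisms $C^*(G)\to C^*_\lambda(G)\subset L(G)$ via \eqref{e21}. Your $2\times 2$ matrix $[\langle v_i,\cdot\,v_j\rangle]$ just makes explicit why the slice map has mb-norm at most $\|\xi\|\|\eta\|$.

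\emph{The paper's argument for (ii) $\Rightarrow$ (iv).} It never passes through decomposability. Assuming $\|M_\phi:C^*(G)\to L(G)\|_{mb}\le 1$, it tests this against the single algebra $D=C^*_\rho(G)$ via \eqref{e10}. Since $\lambda$ and $\rho$ commute, for $x\in\CC[G]$
\[
\|x\|_{C^*(G)}\ge\Big\|\sum x(t)\,\pi_u(t)\otimes\rho(t)\Big\|_{\max}\ge\Big\|\sum x(t)\phi(t)\,\lambda(t)\otimes\rho(t)\Big\|_{L(G)\otimes_{\max}C^*_\rho(G)}\ge\Big|\sum x(t)\phi(t)\Big| .
\]
The last step maps to $B(\ell_2(G))$ by $a\otimes b\mapsto ab$ and evaluates at $\delta_e$, which every $\lambda(t)\rho(t)$ fixes. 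The isometric identity $B(G)=C^*(G)^*$ then finishes.

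\emph{Your argument.} You invoke Theorem \ref{d1}, the identification of mb- and dec-norms for maps into a von Neumann algebra. You then transfer Haagerup's $2\times2$ c.p. matrix $V$ to a positive definite $M_2$-valued function through the trace. That step is sound: for c.p. $T$,
\[
\sum_{s,t}\bar a_s a_t\,\tau\big(T(\pi_u(s^{-1}t))\lambda(s^{-1}t)^*\big)=\sum_{s,t}\big\langle T(\pi_u(s)^*\pi_u(t))\,a_t\lambda(t)^*\delta_e,\ a_s\lambda(s)^*\delta_e\big\rangle\ge 0 .
\]
The $(1,2)$ entry of the resulting kernel is exactly $\phi$, so the convention issue you flagged does not arise.

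\emph{What each route buys.} The paper's argument is elementary and self-contained, using only \eqref{e10} and duality. It shows exactly where the maximal tensor product enters, through the commuting pair $(\lambda,\rho)$. It even shows that boundedness of $Id_{C^*_\rho(G)}\otimes M_\phi$ for this one $D$ already forces $\phi\in B(G)$. Your argument splits the implication into two pieces:
\begin{itemize}
\item the elementary inequality $\|\phi\|_{B(G)}\le\|M_\phi\|_{dec}$, which is the decomposable-multiplier fact;
\item the deep identity $\|M_\phi\|_{dec}=\|M_\phi\|_{mb}$ of Theorem \ref{d1}.
\end{itemize}
The price is reliance on Kirchberg's theorem. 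The gain is an explicit GNS representation of $\phi$ with $\|\xi\|\|\eta\|\le\sqrt{\tau(S_1(1))\tau(S_2(1))}$.

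A small caveat on your closing aside, which is not part of this statement. For the upper bound in Proposition \ref{pp1}, Remark \ref{R2} only transfers norms once you know $M_\phi$ is well defined and bounded on $C^*_\lambda(G)$. That is, you need $q\circ M_\phi$ to vanish on $\ker q$ for the quotient map $q:C^*(G)\to C^*_\lambda(G)$. This requires an extra ingredient, which the paper supplies with Fell's absorption principle.
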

                  \begin{proof} 
                  Note that 
                  the natural map $C^*(G) \to L(G)$ (resp. $C^*(G) \to C^*_\lambda(G)$) is a $*$-homomorphism
                  and hence has m.b. norm $=1$. Thus by \eqref{e21} 
                  $\|M_\phi: C^*(G) \to L(G)\|_{mb}\le
                  \|M_\phi: C^*(G) \to C^*(G)\|_{mb} 
                   $, and similarly with $C^*_\lambda$ in place of $L$.
                   We claim $  \|M_\phi: C^*(G) \to C^*(G)\|_{mb} \le \|\phi\|_{B(G)}$
                   from which follows
                   $\|M_\phi: C^*(G) \to L(G)\|_{mb}\le \|\phi\|_{B(G)}$ (and similarly with $C^*_\lambda$ in place of $L$).
                   To prove the claim, assume $\phi(.)=\langle \xi, \pi(.) \eta\rangle$.
                   Note that 
                   $t\mapsto \pi(t) \otimes \pi_u(t)$ defines a $*$-homomorphism
                    $\nu: C^*(G) \to C_\pi^*(G)\otimes_{\max} C^*(G)$
                   and the linear form $f(T)=\langle \xi, T \eta\rangle$
                   has norm (or m.b. norm) $\le \|\xi\|\|\eta\|$ from $C_\pi^*(G)$ to $\CC$. 
                   We have $[M_\phi: C^*(G) \to C^*(G)]= (f\otimes Id) \nu$ and hence 
                   (see \eqref{comp}) $ \|M_\phi: C^*(G) \to C^*(G)\|_{mb} \le \|\xi\|\|\eta\|$.
                   Taking the inf over all $\xi,\eta$
                   the claim follows.

                  Conversely 
                  let $x\in \CC[G]$. Assume $\|M_\phi: C^*(G) \to L(G)\|_{mb}
                  \le 1 $.
                  We have by universality
                  $$\|x\|_{C^*(G)}=\|\sum x(t) \pi_u(t)\|\ge \|\sum x(t) \pi_u(t)\otimes \rho(t)\|_{C^*(G)\otimes_{\max} C_\rho^*(G)}$$
                  and hence                    $$ \ge  \|\sum x(t) \phi(t) \lambda(t)\otimes \rho(t)\|_{L(G)\otimes_{\max} C_\rho^*(G)} \ge  \|\sum x(t) \phi(t) \lambda(t) \rho(t)\|_{B(\ell_2(G))}$$
  and since $\langle \d_e, \lambda(t) \rho(t) \d_e\rangle=1$ for all $t$ this is
  $\ge |\sum x(t) \phi(t)|$, which means
  $$\|x\|_{C^*(G)} \ge |\sum x(t) \phi(t)|.$$
  This shows $\|\phi\|_{C^*(G)^*} \le1$, and 
         since $B(G)=C^*(G)^*$ isometrically  $\|\phi\|_{B(G)} \le1$.  By homogeneity
         we obtain $\|\phi\|_{B(G)} \le \|M_\phi: C^*(G) \to L(G)\|_{mb}$. 
         By   \eqref{e21},  $\|M_\phi: C^*(G) \to L(G)\|_{mb}$ is less or equal to the other norms in (i) and (iii).
         The proof is then completed easily.
              \end{proof}

          \begin{proof}[Proof of Proposition \ref{pp1}] The inequality $\| M_\phi: C_\lambda^* (G) \to C_\lambda^* (G)  \|_{mb} \le \|\phi\|_{B(G)}$ can be proved   using the Fell absorption principle by which
          for any $\pi$ the map $\lambda(t) \mapsto [\lambda \otimes \pi ] (t)$ extends to
          a (faithful) $*$-homomorphism on $C_\lambda^* (G)$. Then using the functional $f(T)=\langle \xi, T \eta\rangle$ and reasoning as above
          we obtain the   inequality in question.\\
                   The converse follows from Theorem \ref{Tmul} and \eqref{comp}
                   since the natural morphism $\pi_1: C^* (G) \to C_\lambda^* (G)$
                   has m.b. norm $=1$, and
                   $[M_\phi: C^*(G) \to C_\lambda^* (G)]= [M_\phi: C_\lambda^* (G) \to C_\lambda^* (G)]\pi_1,$
                   we have $\|\phi\|_{B(G)} =\|  M_\phi: C^*(G) \to C_\lambda^* (G)  \|_{mb} \le \| M_\phi: C_\lambda^* (G) \to C_\lambda^* (G)  \|_{mb}.$
                      \end{proof} 
                        \begin{rem}
                      Let $\Lambda\subset G$ be an arbitrary subset.
                      Let $\phi: \Lambda \to \CC$. 
                      Let $C^*(G)_\Lambda \subset C^*(G)$ 
                      and
                      ${C^*_\lambda(G)}_\Lambda \subset C_\lambda^*(G)$
                       be the corresponding closed subspaces generated by $\Lambda$. 
                      Repeating the preceding argument and 
                      using the Hahn-Banach theorem
                       shows that
                      $$\|M_\phi : C^*(G)_\Lambda \to L(G)\|_{mb} \le 1
                      \text{  (or equivalently  }   \|M_\phi : {C^*_\lambda(G)}_\Lambda \to {C^*_\lambda(G)}_\Lambda\|_{mb} \le 1)  $$
                      if and only if $\phi $ admits an extension
                      $\tilde \phi : G \to \CC$ with $\| \tilde \phi \|_{B(G)}\le 1$.
    \end{rem}

                      Some of the preceding facts   are similar to observations made in \cite[Prop. 2.4 ]{147}
                       about decomposable multipliers.
                      
                      \begin{rem}
                      In \cite[Cor. 2.6]{JLM} Junge and Le Merdy prove 
                      that $\|u\|_{dec}=\|u\|_{mb}$  for any {\it finite rank} map
                      $u: A \to B$ between $C^*$-algebras.
                     We will say that a $C^*$-algebra $A$ has the 
                     $c$-MBAP if the identity is the pointwise limit of a net
                     of finite rank maps with m.b. norms at most $c$.
                     By  their characterization of {\it finite rank} maps with 
                     m.b. norm at most $1$ on $A$ (see \cite[Th. 2.1]{JLM}) 
                    this holds if and only if $A$ is nuclear. This extends Lance's classical 
                     theorem that the CPAP  is equivalent to nuclearity.
                  It follows that the MBAP of $C_\lambda^* (G)$ (or of $C ^* (G)$) is equivalent to the amenability of $G$, in sharp contrast to the CBAP. 
    \end{rem}

        \section{The mb-norm and strong similarity}\label{s2}

       Following partial results due to Hadwin and Wittstock,
    in \cite{Ha}  Haagerup proved that     
     a unital homomorphism
    $u: C \to B(H)$ on a $C^*$-algebra $C$ is
    c.b. if  and only if  it is similar to a $*$-homomorphism and moreover that its cb-norm  is
     equal to the (attained) infimum of $  \|S\|\|S^{-1}\|$
 over all invertibles $S \in B(H)$ such that $S u(.) S^{-1}$ is   a $*$-homomorphism.
 (see   \cite[Cor. 4.4, p. 79]{95} for more on this).

    The following identity is the analogous fact for
    the mb-norm of $u$:
    
    \begin{thm}\label{s1} Let $u: C \to {  N}$ be a unital homomorphism from a $C^*$-algebra $C$ to a von Neumann algebra ${  N}\subset B(H)$.
    Then  $u$ is m.b. if  and only if  $u$ is strongly similar to a $*$-homomorphism, and 
    $$\|u\|_{mb} =\inf\{\|S\|\| S^{-1}  \| \}$$
    where the infimum runs over all invertibles $S \in {  N}$
    such that $S u(.) S^{-1}$ is   a $*$-homomorphism.
    \end{thm}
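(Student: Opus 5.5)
The easy direction is that strong similarity gives $\|u\|_{mb}\le \|S\|\|S^{-1}\|$. Suppose $S\in N$ is invertible and $\pi(\cdot)=Su(\cdot)S^{-1}$ is a $*$-homomorphism $C\to N$. Then $u(x)=S^{-1}\pi(x)S$, so $u=M\circ\pi$, where $M:N\to N$ is $M(y)=S^{-1}yS$. A $*$-homomorphism $\pi:C\to N$ has $\|\pi\|_{mb}=1$, since $Id_\C\otimes\pi$ extends to a $*$-homomorphism $\C\otimes_{\max}C\to\C\otimes_{\max}N$. The map $M$ is defined on all of $N$, so Proposition \ref{L1} applies and $\|M\|_{mb}\le\|M\|_{dec}$. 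Polarization gives a decomposition of $M$ with $\|M\|_{dec}\le\|S\|\|S^{-1}\|$. One way to see this: $\begin{pmatrix} \|S^{-1}\|\,\|S\|^{-1} S^{*}yS & S^{-1}yS\\ S^*y S^{-1*} & \|S\|\,\|S^{-1}\|^{-1}S^{-1}yS^{-1*}\end{pmatrix}$ equals $W^*(y\otimes 1_2)W$ with $W$ the row $[\,aS,\ b S^{-1*}]$ suitably scaled. Hence it is c.p., and its diagonal entries have norms $\le\|S\|\|S^{-1}\|$. Composing via \eqref{e21} gives $\|u\|_{mb}\le\|S\|\|S^{-1}\|$.

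For the converse, assume $\|u\|_{mb}<\infty$. By Theorem \ref{d1}, $u\in D(C,N)$ and $\|u\|_{dec}=\|u\|_{mb}$, with the infimum attained since $N$ is a dual space. So there are $S_1,S_2\in CP(C,N)$ with $\max\|S_j\|=\|u\|_{mb}=:c$, such that the map $V$ in \eqref{d12} is c.p. into $M_2(N)$.

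From here I would run Haagerup's argument for the c.b. case, keeping every operator inside $N$. The multiplicativity of $u$ together with $V$ being c.p. should give a positive invertible $T\in N$ with $T\ge c^{-1}$, $\|T\|\le c$, and $u(x)^*Tu(x)\le \|x\|^2T$ for unitaries (equivalently $u(x^*)^*Tu(x)$ behaving like $T$). Then $S=T^{1/2}$ makes $Su(\cdot)S^{-1}$ contractive on unitaries, hence a unital contractive homomorphism on $C$, and therefore a $*$-homomorphism.

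To produce $T$, I would consider the convex set of such data, namely the pairs $(S_1(1),S_2(1))$ over all decompositions. I would use an averaging or fixed point argument over the unitary group of $C$, via an invariant mean or weak* limits in $N$: replace $S_1$ by $x\mapsto u(g)^{*-1}\,S_1(g^*xg)\,u(g)^{-1}$-type conjugates, which remain valid decompositions because $u$ is a homomorphism. The weak* compactness of balls in $N$ is what keeps $T$ in $N$, and this is exactly why one gets strong similarity rather than mere similarity in $B(H)$.

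I expect the main obstacle to be extracting from the $2\times2$ c.p. map an invariant positive element with the sharp bound $\|T\|\|T^{-1}\|\le c^2$, so that $\|S\|\|S^{-1}\|\le c$. An alternative I would try first is a Stinespring dilation of $V$: write $V(x)=W^*\rho(x)W$ in normal form, and use multiplicativity of $u$ to show $u(x)=W_1^*\rho(x)W_2$ with $\|W_1\|^2,\|W_2\|^2\le c$. Then I would apply Paulsen's/Haagerup's trick to obtain the similarity. This needs the dilation to be commutant-compatible with $N$, or else I would pass to $N$ through a normal conditional-expectation-free argument using the bicommutant. If that proves problematic, I would fall back on the averaging approach above.
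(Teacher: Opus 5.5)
Your easy direction is correct in substance, although the paper gets it more cheaply: left and right multiplication by $a\in N$ have mb-norm at most $\|a\|$, so $u=S^{-1}\pi(\cdot)S$ gives $\|u\|_{mb}\le\|S\|\|S^{-1}\|$ directly. Your explicit matrix, however, has the two diagonal entries interchanged. With $W=[aS,\ bS^{-1*}]$, the $(1,2)$ entry of $W^*yW$ is a multiple of $S^*yS^{-1*}$, not of $S^{-1}yS$. In fact the matrix you display is not c.p.: test it with $S=\mathrm{diag}(1,2)$ and $y$ a rank one projection onto $(1,1)/\sqrt2$. The correct choice is $W=[\lambda^{1/2}S^{-1*},\ \lambda^{-1/2}S]$ with $\lambda=\|S\|/\|S^{-1}\|$. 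Its diagonal entries are $\lambda S^{-1}yS^{-1*}$ and $\lambda^{-1}S^*yS$, both of norm $\|S\|\|S^{-1}\|$.

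The genuine gap is in the converse: you never construct the invariant element $T\in N$, and neither sketched route reaches it as stated. An invariant mean on the unitary group of $C$ does not exist in general. If it did, every bounded unital homomorphism into $N$ would be strongly similar to a $*$-homomorphism, hence m.b., contradicting the examples of \S\ref{con}. Moreover, your conjugated decompositions $x\mapsto u(g)S_1(g^*xg)u(g)^*$ do not stay in the ball of radius $c$, so there is no bounded invariant set for a fixed point argument. The Stinespring/Paulsen route only yields a similarity in $B(H)$, and placing it in $N$ is exactly the content of the theorem; you flag this but leave it open.

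The missing idea, which is the paper's, is to bring in $N'$. The map $w: N'\otimes_{\max} C\to B(H)$, $w(y\otimes x)=yu(x)$, is a unital homomorphism. Moreover $\|w\|_{cb}\le\|u\|_{mb}$, by \eqref{e10'} with $D=N'$ composed with the $*$-homomorphism $N'\otimes_{\max}N\to B(H)$. Apply Haagerup's theorem to $w$ as a black box. This gives $S\in B(H)$ with $\|S\|\|S^{-1}\|=\|w\|_{cb}$ and $Sw(\cdot)S^{-1}$ a $*$-homomorphism. Restricted to $N'\otimes 1$, the map $y\mapsto SyS^{-1}$ is self-adjoint, which forces $S^*Sy=yS^*S$ for all $y\in N'$, i.e.\ $|S|\in N''=N$. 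Writing $S=U|S|$ with $U$ unitary, $|S|u(\cdot)|S|^{-1}$ is a $*$-homomorphism with the same constant.

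Your dilation idea can be rescued along the same lines. Apply Arveson's commutant lifting to the minimal Stinespring dilation $V(x)=\mathcal{W}^*\rho(x)\mathcal{W}$ and to $N'\otimes 1_2\subset V(C)'$. This gives a representation $\sigma$ of $N'$ commuting with $\rho$ and satisfying $\sigma(y)W_j=W_jy$. Hence $yu(x)=W_1^*\rho(x)\sigma(y)W_2$ and $\|w\|_{cb}\le c$, but the polar decomposition step above is still needed to land in $N$.
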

      We first prove an expanded version, essentially
        observed in \cite[(2) p. 81]{toho}. See also \cite[\S 6.3]{157}.
     \begin{pro} \label{p1}
        Let $u: C \to N  $  be as in Theorem \ref{s1}.        Let $v: N' \to B(H)$ be the inclusion map.
        Then $u$ is strongly similar to a $*$-homomorphism
        if  and only if  the map $v . u : N' \otimes  C\to B(H) $ (defined by
        $v . u (a\otimes b)= v(a)u(b) $ extends to
        a c.b. map $w$ from $N' \otimes_{\max} C$ to $B(H)$.
        Moreover
         \begin{equation}  \label{e9}
         \| w:  N'\otimes_{\max} C \to B(H)\|_{cb} =\inf\{\|S\|\| S^{-1}  \| \}\end{equation}
        where the (attained) inf runs over all invertible  $S\in N$
        such that  $S u(.) S^{-1}$ is   a $*$-homomorphism.\\
        Lastly, if $N'$ admits a cyclic vector in $H$ we have $\|w\|=\|w\|_{cb}$.
\end{pro}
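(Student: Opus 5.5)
Both inequalities in \eqref{e9} are proved separately, and the equivalence comes out of them. \emph{Easy direction.} Let $S\in N$ be invertible and suppose $\pi(\cdot)=Su(\cdot)S^{-1}$ is a $*$-homomorphism. Since $S\in N$, $S$ commutes with $N'$. Hence $\pi(C)\subset N$ commutes with $v(N')=N'$, and the pair $(v,\pi)$ consists of $*$-homomorphisms with commuting ranges. By the definition of the max-norm, the product $v\cdot\pi$ extends to a $*$-homomorphism $\sigma: N'\otimes_{\max} C\to B(H)$, so $\|\sigma\|_{cb}=1$. Because $S^{-1}$ commutes with $a\in N'$,
$$S^{-1}\sigma(a\otimes b)S=S^{-1}aSu(b)=a\,u(b)=v\cdot u(a\otimes b).$$
Therefore $w=S^{-1}\sigma(\cdot)S$ is the desired extension, and $\|w\|_{cb}\le\|S\|\|S^{-1}\|$. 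This proves the ``if'' direction and the inequality $\le$ in \eqref{e9}.

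\emph{Converse.} Suppose $w$ is c.b. on the $C^*$-algebra $\mathcal A=N'\otimes_{\max} C$. Note that $w(1\otimes1)=1$. The map $w$ is a homomorphism on the algebraic tensor product: indeed
$$w((a\otimes b)(a'\otimes b'))=aa'u(bb')=a\,u(b)\,a'u(b'),$$
because $u(b)\in N$ commutes with $a'\in N'$. By density, $w$ is a unital c.b. homomorphism on $\mathcal A$. Haagerup's theorem (recalled just before Theorem \ref{s1}) then gives an invertible $S\in B(H)$ with $\|S\|\|S^{-1}\|=\|w\|_{cb}$ (the infimum is attained) such that $Sw(\cdot)S^{-1}$ is a $*$-homomorphism. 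The key step is to check that this $S$ can be taken in $N$.

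I would try the standard choice of similarity: take $S=T^{1/2}$ with $T$ positive invertible and $\|T\|\|T^{-1}\|\le\|w\|_{cb}^2$. Such a $T$ arises from averaging over the unitary group, or from the Haagerup--Paulsen factorization, and satisfies $T^{1/2}w(U)T^{-1/2}$ unitary for all unitaries $U$. Concretely, $T$ can be taken in the weak$^*$-closed convex hull of $\{w(U)^*w(U): U\in\mathcal U(\mathcal A)\}$, suitably normalized, via a fixed point/invariant mean argument in the Dixmier style. Then $T$ commutes with $w(U)$ twisted appropriately.

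An easier route to $S\in N$: for each unitary $a\in N'$, the element $w(a\otimes1)=a$ is already unitary. If $Sw(\cdot)S^{-1}$ is a $*$-homomorphism, then both $SaS^{-1}$ and $a$ are unitary. Writing $|S|=(S^*S)^{1/2}$, the identity $(SaS^{-1})^*(SaS^{-1})=1$ gives $a^*S^*Sa=S^*S$. So $S^*S$ commutes with all unitaries of $N'$, hence $|S|\in N''=N$. Replacing $S$ by $|S|$ (via the polar decomposition $S=V|S|$ with $V$ unitary, conjugating by $V^*$ preserves the $*$-homomorphism property and norms) gives a similarity in $N$ with the same $\|S\|\|S^{-1}\|$. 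Restricting to $1\otimes C$ shows that $|S|u(\cdot)|S|^{-1}$ is a $*$-homomorphism. This gives $\ge$ in \eqref{e9} and attainment. I expect this step to be the crux, and the polar-decomposition argument seems to settle it cleanly.

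\emph{Cyclic vector.} Suppose $\xi$ is cyclic for $N'$. The plan is to show $\|w\|_{cb}\le\|w\|$ by reducing matrix amplifications to the scalar case. Use $w_n=Id_{M_n}\otimes w$ on $M_n(\mathcal A)$. The commutant of $M_n(N)$ is $N'\otimes1$, and it is also of the form required. Alternatively, use a standard trick: $\xi$ cyclic for $N'$ means $\xi$ is separating for $N$. Then $\|w\|$ controls the similarity directly, by Haagerup's bound $\|w\|_{cb}$ equal to the inf of similarity constants, combined with the fact that for homomorphisms with cyclic commutant the bounded-to-c.b. passage holds (Paulsen's trick using $M_n(N')$ acting on $H^n$, with $\xi\otimes e_i$ giving a cyclic family). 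This last part is where I am least sure of the details; I would try embedding $M_n(\mathcal A)$ into $\mathcal A$ using $N'\supset$ enough partial isometries derived from cyclicity.
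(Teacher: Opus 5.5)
Your proof of the equivalence and of \eqref{e9}, including attainment of the infimum, is correct and is the paper's argument. Your ``easier route'' applies Haagerup's theorem to the unital c.b.\ homomorphism $w$. It then restricts $Sw(\cdot)S^{-1}$ to $N'\otimes 1$ to get $S^*S\in N''=N$, and replaces $S$ by $|S|$ via polar decomposition. The paper does exactly this, only with a self-adjointness computation where you use unitaries of $N'$. Drop the averaging paragraph before it: a Dixmier-type invariant-mean argument is not available for a general non-amenable domain, and you never use it.

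The genuine gap is the last assertion, which you leave unproved, and none of the routes you sketch works:
\begin{itemize}
\item Invoking ``the bounded-to-c.b.\ passage for homomorphisms with cyclic commutant'' assumes exactly what is to be shown.
\item Embedding $M_n(N'\otimes_{\max}C)$ into $N'\otimes_{\max}C$ via partial isometries in $N'$ is impossible in general. Take $N=N'=L_\infty[0,1]$ acting on $L_2[0,1]$: it has the cyclic vector $1$ but contains no copy of $M_n$ for $n\ge 2$.
\item Observing that $\xi\otimes e_1$ is cyclic for $M_n(N')$ on $H^n$ only reproduces the same question for $Id_{M_n}\otimes w$, without relating its norm to $\|w\|$.
\end{itemize}

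The missing idea is that $w$ is an $N'$-bimodule map. Since $w$ is a homomorphism with $w(a\otimes 1)=a$, we have $w((a\otimes 1)x(a'\otimes 1))=a\,w(x)\,a'$ for $a,a'\in N'$. The paper exploits this through R.~Smith's theorem (Remark~\ref{cyc}), whose proof is short.

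Let $X=[x_{ij}]\in M_n(N'\otimes_{\max}C)$ and $\eta,\zeta\in H^n$. By cyclicity of $\xi$, approximate $\eta_j$ by $b_j\xi$ and $\zeta_i$ by $d_i\xi$ with $b_j,d_i\in N'$. Put $h=(\sum_j b_j^*b_j+\varepsilon)^{1/2}\in N'$ and $c_j=b_jh^{-1}$. Then $c_jh=b_j$, $\sum_j c_j^*c_j\le 1$, and $\|h\xi\|^2=\sum_j\|b_j\xi\|^2+\varepsilon\|\xi\|^2$. Define $k$ and $e_i$ from the $d_i$ in the same way. Bimodularity gives
\[
\sum_{i,j}\langle w(x_{ij})b_j\xi,d_i\xi\rangle=\langle w(y)h\xi,k\xi\rangle,\qquad y=\sum_{i,j}(e_i^*\otimes 1)\,x_{ij}\,(c_j\otimes 1).
\]
Here $y$ is a row of norm $\le 1$ times $X$ times a column of norm $\le 1$, so $\|y\|\le\|X\|$. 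Letting the approximations tighten and $\varepsilon\to 0$ yields $|\langle (Id_{M_n}\otimes w)(X)\eta,\zeta\rangle|\le\|w\|\,\|X\|\,\|\eta\|\,\|\zeta\|$, that is, $\|w\|_{cb}=\|w\|$.
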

          \begin{proof}  Assume $w$ c.b. Clearly $w$ is still a unital homomorphism,
          and hence if it is c.b. it must be similar to a $*$-homomorphism
          whence the existence of an invertible $S$ on $H$ (with
          $\|S\|\| S^{-1}  \| =\|w\|_{cb} $)
          such that $x\mapsto S w(x) S^{-1} $ is a $*$-homomorphism.
          But restricting this to $N' \otimes 1$ we find that 
          $N' \ni y \mapsto  S y S^{-1}$ is  a $*$-homomorphism. A fortiori
          it is self-adjoint, 
          which implies by a simple calculation 
          ($S y S^{-1}   = (S y^* S^{-1})^* $ and hence $S^*S y=yS^*S$)
          that $|S| \in N''=N$.
          Now $S=U|S|$ with $U$ unitary and the map
          $C\ni x \mapsto  U (|S| u(x) |S|^{-1}) U^{-1}$
          is  a $*$-homomorphism.
          Since $U$ is unitary $C\ni x \mapsto    (|S| u(x) |S|^{-1})  $
           is  a $*$-homomorphism, proving the strong similarity of $u$.
           
           Conversely, if $S\in N$ is such that $C\ni x \mapsto    (S u(x) S^{-1})  $
           is  a $*$-homomorphism denoted by $u_S: C \to N$,
           then obviously $v.u_S $ is a (completely contractive)  
           $*$-homomorphism from $ N'\otimes_{\max} C$ to $B(H)$.
           Then  $a\otimes b \mapsto S^{-1} v.u_S (a\otimes b)   S $
           has cb-norm at most $\|S\|  \| S^{-1}  \|$ on $ N'\otimes_{\max} C$ and since $S$ commutes with $v(b)$, we have
           $S^{-1} v.u_S (a\otimes b)   S = v.u(a\otimes b) $. Thus we 
           conclude that $$\| v.u: N'  \otimes_{\max} C\to B(H)\|_{cb} \le \inf\|S\|  \| S^{-1}  \|.$$
           The converse inequality was proved in the first part. There we also proved that the inf is actually a minimum.\\
             The last assertion follows from  \cite{smith} (see the next remark, already stated in    \cite[Cor. 6.23]{154}).
    \end{proof}
    \begin{rem}\label{cyc} Let $u: E \to N$ be a linear map from
    an operator subspace $E\subset C$ to a von Neumann algebra $N \subset B(H)$.
    Let $\a$ be a $C^*$-norm on $N' \otimes_{alg} C $. 
    Assume that  the map $w: N' \otimes_{alg} E \to B(H)$ defined as above by $w(y\otimes x)= y u(x)$ ($y\in N', x\in E$) extends to a bounded map
    on $\ovl{N' \otimes_{alg} E}^\a$. Then if $N'$ admits a cyclic vector, $w$ is c.b. and
 $$\|w\|_{cb}=\|w\|.$$
    Indeed, since $w$ is an $N'$-bimodule map this follows from
    Roger Smith's result in  \cite{smith}.
    \end{rem}

    \begin{proof}[Proof of Theorem \ref{s1}] By \eqref{e10}
    with $D={N}'$ and $E=C$  
    and by \eqref{e9}  there is $S\in N$ with
   $ \|S\|\| S^{-1}  \|\le \|u\|_{mb}$
    such that $u_S(.)=S u(.) S^{-1}$ is   a $*$-homomorphism into $N$.
    Conversely, since a  $*$-homomorphism has unit mb-norm,
    and left or right multiplication by an element  $x\in N$
    has mb-norm at most $\|x\|$, the fact that
     $u(.)=S^{-1}u_S(.)S$ with   $S\in N$ gives us
    $\|u\|_{mb} \le  \|S\|\| S^{-1}\| $.
     \end{proof}
     
     Haagerup's result was extended by Paulsen \cite{Pa}
     to the non self-adjoint case to study dilation theory following
     a  tradition initiated by     Sz.-Nagy  in his famous paper \cite{Nagy}
      (see \cite[p. 7 and p. 47]{Pau} or \cite[p. 15 and chap. 4]{95} for more details).
         In the situation of the next statement, he proved that a unital homomorphism $u: C \to B(H)$
         is similar to a c.c. one  if and only if it is c.b.
          The analogue   for the mb-norm
     is as follows.

      \begin{thm}\label{pam} Let $A$ be a unital $C^*$-algebra. Let $u: C \to {  N}$ be a unital homomorphism from a  unital operator subalgebra $C\subset A$  to a von Neumann algebra ${  N}\subset B(H)$.
  If $\|u\|_{mb}<\infty$ there is an invertible $S \in {  N}$
  with $\|S\|\| S^{-1}  \|= \|u\|_{mb}   $
       such that $u_S(.)=S u(.) S^{-1}$ is   a  unital   homomorphism with  mb-norm $=1$.
       More precisely, $u$ is strongly similar to a
       homomorphism  with mb-norm $=1$  if and only if it is   m.b. 
      and  we have
         $$\|u\|_{mb} =\inf\{\|S\|\| S^{-1}  \| \mid S\in N \ \ \|u_S\|_{mb}=1\}.$$
  \end{thm}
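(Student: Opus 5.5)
The plan is to imitate the proof of Theorem \ref{s1}, replacing Haagerup's theorem by Paulsen's similarity theorem for c.b. homomorphisms on non-selfadjoint operator algebras.

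\emph{The easy direction.} Assume $u_S(.)=Su(.)S^{-1}$ with $S\in N$ and $\|u_S\|_{mb}=1$. Then $u(.)=S^{-1}u_S(.)S$. Left and right multiplication by an element $x\in N$ has mb-norm at most $\|x\|$, since these are c.p.-type maps on $N$ and \eqref{e21} applies. Hence $\|u\|_{mb}\le \|S\|\|S^{-1}\|$, which gives $\|u\|_{mb}\le\inf$.

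\emph{The converse.} Assume $\|u\|_{mb}<\infty$, and consider the operator algebra $N'\otimes_{\max} C\subset N'\otimes_{\max} A$. This is a unital operator subalgebra, namely the closure of the algebraic tensor product in the $C^*$-algebra $N'\otimes_{\max}A$. Define $w=v.u$ on $N'\otimes C$ by $w(y\otimes x)=yu(x)$. Because $N'$ commutes with $u(C)\subset N$, $w$ is a unital homomorphism. By \eqref{e10} with $D=N'$, followed by the $*$-homomorphism $N'\otimes_{\max}N\to B(H)$, $y\otimes z\mapsto yz$, we get $\|w\|\le \|u\|_{mb}$. Replacing $N'$ by $M_n(N')$ (that is, using \eqref{e10} with $D=M_n(N')$) gives $\|w\|_{cb}\le\|u\|_{mb}$.

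Paulsen's theorem then yields an invertible $T\in B(H)$ with $\|T\|\|T^{-1}\|=\|w\|_{cb}$ such that $T w(.) T^{-1}$ is completely contractive. Its restriction to $N'\otimes 1$ is the unital homomorphism $y\mapsto TyT^{-1}$ on the $C^*$-algebra $N'$. A unital completely contractive homomorphism on a $C^*$-algebra is a $*$-homomorphism, since it is u.c.p. As in Proposition \ref{p1}, this gives $T^*T\in N''=N$. Write $T=U|T|$ with $U$ unitary and set $S=|T|\in N$. Then $x\mapsto Ww(x)W^{-1}$, with $W=|T|$, is still completely contractive, because conjugation by a unitary is harmless, and it equals $y\otimes x\mapsto y\,u_S(x)$ because $S$ commutes with $N'$.

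\emph{Showing $\|u_S\|_{mb}=1$.} We now know that $y\otimes x\mapsto y u_S(x)$ is contractive, indeed c.c., on $N'\otimes_{\max} C$ with its norm induced from $N'\otimes_{\max}A$. The task is to convert this into $\|u_S\|_{mb}\le 1$. This conversion is the main obstacle. The tool is the extension Theorem \ref{d1}/\ref{ext1} combined with the commutant trick. Contractivity of $y\otimes x\mapsto yu_S(x)$ on $N'\otimes_{\max} C$ should give a c.c. extension to $N'\otimes_{\max}A$, by Arveson--Wittstock after embedding in a $C^*$-algebra. Restricting that extension to $N'\otimes 1$ gives a unital $*$-homomorphism, so it is an $N'$-bimodule map. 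Its restriction to $1\otimes A$ is therefore a c.c. map $\tilde u:A\to N''=N$ extending $u_S$, and $\tilde u$ satisfies
\[
\|N'\otimes_{\max}A\to B(H),\; y\otimes a\mapsto y\tilde u(a)\|_{cb}\le 1 .
\]
By Haagerup's characterization of the dec-norm via the commutant (the kind of argument behind Theorem \ref{d1}), this commutant-bimodule condition means $\|\tilde u\|_{dec}\le1$. Then Proposition \ref{L1} gives $\|\tilde u\|_{mb}\le 1$, hence $\|u_S\|_{mb}\le1$. Since $u_S$ is unital, $\|u_S\|_{mb}\ge 1$, so $\|u_S\|_{mb}=1$.

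If this route stalls, I would instead prove $\|u_S\|_{mb}\le1$ directly. The approach would be to represent $\C\otimes_{\max}A$ and use that $u_S$ composed with the multiplication map factors through $N'\otimes_{\max}N$. Finally, the identity $\|S\|\|S^{-1}\|=\|T\|\|T^{-1}\|=\|w\|_{cb}\le\|u\|_{mb}$, together with the easy direction, gives equality and shows that the infimum is attained.
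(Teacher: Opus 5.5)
Your proposal is correct and follows the paper's proof essentially step by step: apply Paulsen's theorem to $w$ on $N'\otimes_{\max}C$, restrict to $N'\otimes 1$ to replace the similarity by $|T|\in N$, take an Arveson--Wittstock extension to $N'\otimes_{\max}A$ (which is $N'$-bimodular by the multiplicative domain argument), and obtain an extension $\tilde u:A\to N$ of $u_S$. The one difference is that your appeal to a commutant characterization of the dec-norm is unnecessary: $\tilde u$ is unital and c.c., hence u.c.p., so Proposition \ref{L1} gives $\|\tilde u\|_{mb}=1$ directly, which is exactly how the paper concludes.
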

      
      \begin{proof} Let $w:  N'\otimes_{\max} C \to B(H)$ be as in the preceding proof, so that by \eqref{e10'}
      $\|w\|_{cb} \le \|w\|_{mb} \le \|u\|_{mb}$. By Paulsen's result
      there is $S\in GL(H)$ such that
      $\|w_S\|_{cb} =1$ with $\|S\| \|S^{-1}\|  \le \|w\|_{cb}$. 
       Restricting to $N'\otimes 1$ as above,
      we see that we may assume $|S|\in N$  and of course
      $w_{|S|}$ is still unital  and completely contractive (c.c.).
      We claim  that $\|u_{|S|}\|_{mb} =1$. 
      Indeed, $w_{|S|}$  extends to a c.c. unital and hence u.c.p. map 
      $\phi : N'\otimes_{\max} A \to B(H)$ (by the injectivity of $B(H)$).
      This c.p. map $\phi$
       must be $N'$-bimodular since $N'\otimes 1$ is in its  multiplicative domain (see e.g. \cite[p. 107]{154}). Let $f: A \to B(H)$ be the map
      such that $\phi(x\otimes a)= x f(a)$ (i.e. $f(a)=\phi(1\otimes a)$).
      The bimodularity of $\phi$ implies that $f(a)\in (N')'=N$.
      Since $f: A \to N$ is a u.c.p. map between
      $C^*$-algebras, we have $\|f\|_{mb}=1$ (see Proposition  \ref{L1})
      and  since $f_{|C}=u_{|S|}$  we find $\|u_{|S|}\|_{mb} =1$,
      proving the claim.
       \\ Conversely, if $S\in N$ is invertible such that $\|u_S\|_{mb}= 1$ 
      then as above $\|u\|_{mb} \le \|S\|\| S^{-1}  \| \|u_S\|_{mb}=\|S\|\| S^{-1}  \|$.
      This completes the proof.
      \end{proof}
      
      The most classical application   is Paulsen's criterion
      for a single operator $T\in B(H)$: the operator $T$ is similar to a contraction if and only if the functional calculus map $P\to P(T)$ ($P$ a polynomial)
      defines a completely contractive map $\rho_T : {\cl A} \to B(H)$  from the disc algebra 
      ${\cl A} \subset C(\partial D)$ (i.e. the closure of the set of polynomials)
       to $B(H)$.
       Note that $\|\rho_T\|_{mb}=\|\rho_T\|_{cb}$ since $C(\partial D)$ is nuclear and
       $B(H)$ has the WEP.
       We may write:
       $$\|T\|\le 1 \Leftrightarrow \|\rho_T\|=1 \Leftrightarrow \|\rho_T\|_{cb}=1 \Leftrightarrow \|\rho_T\|_{mb}=1 .$$

      The analogue of Paulsen's criterion is as follows. Note  
      we recover the original one with $N=B(H)$.
      \begin{cor} Let $T\in B(H)$  
      and let $N$ 
      be a von Neumann algebra containing $T$.
  The map $\rho_T: {\cl A} \to N$
      is m.b. if  and only if  there is $S\in N$ invertible  such that $\|STS^{-1}\| \le 1$, and
      $$\| \rho_T\|_{MB({\cl A},N)}=\inf\{\| S\| \| S^{-1}\|  \}$$
      where the inf (which is attained) runs over all such $S$.
       \end{cor}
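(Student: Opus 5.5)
The plan is to apply Theorem \ref{pam} with $C={\cl A}$, viewed as a unital operator subalgebra of the unital $C^*$-algebra $A=C(\partial D)$, and with $u=\rho_T:{\cl A}\to N$. The map $\rho_T$ is a unital homomorphism, and it takes values in $N$ because $N$ is norm closed and contains $T$ and $1$. For every invertible $S\in N$ we have $S\rho_T(\cdot)S^{-1}=\rho_{STS^{-1}}$. So the whole statement comes down to one equivalence: for $T'\in N$,
$$\|\rho_{T'}\|_{MB({\cl A},N)}=1 \iff \|T'\|\le 1 .$$
Once this is known, Theorem \ref{pam} gives the result directly. Its infimum runs over invertible $S\in N$ with $\|\rho_{STS^{-1}}\|_{mb}=1$, and by the equivalence these are exactly the $S$ with $\|STS^{-1}\|\le1$. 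That theorem also says the infimum is attained, namely $\|S\|\|S^{-1}\|=\|u\|_{mb}$ for some such $S$, and that $\rho_T$ is m.b. exactly when such an $S$ exists.

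The direction ($\Rightarrow$) is trivial. Since $\|\rho_{T'}\|\le\|\rho_{T'}\|_{mb}=1$, applying it to the polynomial $z$ gives $\|T'\|\le1$.

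The direction ($\Leftarrow$) is the main point. Note that the mb-norm here is computed in $N$, not in $B(H)$, so the remark made before the statement (which relied on the WEP of $B(H)$) cannot be used. Instead I would use the Sz.-Nagy unitary dilation of the contraction $T'$, together with the extension argument from the proof of Theorem \ref{pam}. First, the Sz.-Nagy dilation gives a unitary $U$ on some $K\supset H$ with $P(T')=P_H P(U)_{|H}$ for all polynomials $P$. Second, the map $f\mapsto P_H f(U)_{|H}$ is a u.c.p. map $C(\partial D)\to B(H)$ that extends $\rho_{T'}$. The difficulty is that its values need not lie in $N$. To repair this I would follow the proof of Theorem \ref{pam}. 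The map $w:N'\otimes_{\max}{\cl A}\to B(H)$, given by $y\otimes P\mapsto yP(T')$, is a homomorphism. Whenever it is c.c., it extends to a u.c.p. map $\phi$ on $N'\otimes_{\max}C(\partial D)$. This $\phi$ is $N'$-bimodular by the multiplicative domain argument, so $f(a)=\phi(1\otimes a)$ lands in $N$. Then $f$ is a u.c.p. map $C(\partial D)\to N$ extending $\rho_{T'}$, and Proposition \ref{L1} gives $\|\rho_{T'}\|_{mb}\le\|f\|_{mb}=1$.

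It therefore remains to see that $w$ is completely contractive on $N'\otimes_{\max}{\cl A}$. Because $T'$ commutes with $N'$, the pair $(T',N')$ is a contraction commuting with a $C^*$-algebra. I would take the commutant-lifting form of the dilation, namely the minimal isometric dilation followed by its minimal unitary extension, in which each $y\in N'$ lifts to an operator commuting with $U$. The map $y\mapsto$ (its lift) is a $*$-representation $\pi$ of $N'$ commuting with $U$. This yields a $*$-homomorphism of $N'\otimes_{\max}C(\partial D)$, and compressing it to $H$ gives a c.c. map extending $w$. This structural step, the $*$-representation property of the commutant lift in the minimal dilation, is where I expect the main obstacle. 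The alternative is to cite Ando/Sz.-Nagy–Foias commutant lifting for self-adjoint families. Everything else is routine bookkeeping.
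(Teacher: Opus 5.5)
Your reduction is the one the paper intends; the corollary is stated without proof right after Theorem~\ref{pam}. You apply that theorem to $C={\cl A}\subset C(\partial D)$ and $u=\rho_T$, use $S\rho_T(\cdot)S^{-1}=\rho_{STS^{-1}}$, and reduce everything to the equivalence $\|\rho_{T'}\|_{MB({\cl A},N)}=1\iff\|T'\|\le 1$ for $T'\in N$. Your ($\Rightarrow$) is fine. You are also right that the WEP remark preceding the statement only covers $B(H)$-valued maps, so ($\Leftarrow$) needs a separate argument when $N$ lacks the WEP.

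However, the difficulty you identify for ($\Leftarrow$) does not exist: the Sz.-Nagy compression $\Phi(f)=P_H f(U)|_H$ always takes values in $N$. For $n\ge 0$ we have $P_H U^{-n}|_H=(P_H U^{n}|_H)^*=T'^{*n}$. So $\Phi$ maps trigonometric polynomials into $\mathrm{span}\{T'^n,T'^{*n}: n\ge 0\}\subset C^*(1,T')\subset N$, and by density and norm continuity $\Phi(C(\partial D))\subset C^*(1,T')\subset N$. Thus $\Phi: C(\partial D)\to N$ is a u.c.p. map between $C^*$-algebras extending $\rho_{T'}$, and Proposition~\ref{L1} gives $\|\Phi\|_{mb}=1$. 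The max-norm on $\C\otimes{\cl A}$ is induced by $\C\otimes_{\max}C(\partial D)$, so restriction gives $\|\rho_{T'}\|_{MB({\cl A},N)}\le 1$, with equality since $\rho_{T'}$ is unital.

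The whole detour through $w$, commutant lifting and the multiplicative-domain argument is therefore unnecessary. In fact the map $a\mapsto\phi(1\otimes a)$ you end up with is just $\Phi$ again.

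The step you flagged as the main obstacle is nevertheless true and standard. Each $y\in N'$ commutes with $T'$ and $T'^*$, hence with $(1-T'^*T')^{1/2}$, so $y$ and $y^*$ leave $\mathcal{D}=\overline{(1-T'^*T')^{1/2}H}$ invariant. The Sch\"affer lift $\tilde y=y\oplus y|_{\mathcal{D}}\oplus y|_{\mathcal{D}}\oplus\cdots$ then commutes with the minimal isometric dilation and satisfies $\tilde y^{\,*}=\widetilde{y^*}$. It also extends canonically to the minimal unitary extension, which yields the $*$-representation you wanted. You simply do not need it.
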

      \begin{rem} The preceding proof actually shows that
      $\|u\|_{mb}=\|w\|_{cb}$, and if $N' \subset B(H)$ is cyclic $\|u\|_{mb}=\|w\|$
      by Remark \ref{cyc}.
      \end{rem}
       
         \def\t{\theta}
         
    Let $\pi: A \to B(H)$ be a unital $*$-homomorphism and
  $\d: A \to B(H)$ be a $\pi$-derivation (meaning $\d(xy)=\pi(x) \d(y)+\d(x)\pi(y)$ for all $x,y\in A$). 
        For any $T$ in $ B(H) $ we denote
      $$\d^T(x)=[\pi(x),T].$$
      Most of the time, it suffices to consider derivations from a unital subalgebra of $  B(H)$. Indeed,
       since the derivation $\d$ vanishes on $\ker(\pi)$ it factorizes and  defines
      a derivation from $\pi(A)$ to $B(H)$ relative to the inclusion $\pi(A)\subset B(H)$.
      
By Christensen's theorem (see \cite[p. 80]{95}) the derivation $\d$ is c.b. 
      if and only if it is inner, i.e. $\d=\d^T$ for some $T\in B(H)$, and 
      moreover,  since the c.b. norms of $\d^T$ on $\pi(A)$  and   $\pi(A)''$
      are equal, 
      Christensen's formula (see \cite[p. 96]{95}) gives us
      $$\|\d\|_{cb} =2\inf \{\|\t\| \mid \t\in B(H),  \d=\d^\t \} =d(T, \pi(A)').$$
      In addition, the preceding infimum is attained on some $\t\in B(H)$.
      
     The case of m.b. derivations leads to a  generalization of the latter formula:

     \begin{thm}\label{t5.6} Let $M\subset B(H)$ be a von Neumann algebra,
         $\pi: A \to M$   a unital $*$-homomorphism on a (unital) $C^*$-algebra $A $, and
  $\d: A \to M$   a $\pi$-derivation. 
  Then $\|\d\|_{mb} <\infty $ if  and only if  there is $\t \in  M$ such that
  $\d=\d^\t$ and we have
  \begin{equation}\label{chr}
  \|\d\|_{mb} =2\inf\{ \|T\| \mid T\in  M, \ \d=\d^T\} =d(\t,\pi(A)'), \end{equation}
  where again the infimum is attained.
  Moreover
  $\|\d\|_{mb}$ coincides with the cb-norm of the 
  mapping $D: M' \otimes_{\max} A \to B(H)$ defined by $D(x\otimes a)=\pi(x)\d(a)$, and
  $\|\d\|_{mb}=\| D\|$ if
  $M'$ is cyclic.
 
     \end{thm}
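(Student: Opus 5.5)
The plan is to reduce to Theorem \ref{s1} and Proposition \ref{p1} by the standard $2\times 2$ trick that turns a derivation into a homomorphism. Given the $\pi$-derivation $\d: A\to M$, consider the unital homomorphism
$$u_\lambda: A\to M_2(M),\qquad u_\lambda(a)=\begin{pmatrix}\pi(a) & \lambda\d(a)\\ 0 & \pi(a)\end{pmatrix},\quad \lambda>0 .$$
The von Neumann algebra $M_2(M)\subset M_2(B(H))$ has commutant $M'\otimes 1_2$. Since $\pi$ is a $*$-homomorphism, $\|\pi\|_{mb}=1$. The corner maps are m.b. with norm at most that of $u_\lambda$, and conversely $\|u_\lambda\|_{mb}\le 1+\lambda\|\d\|_{mb}$ (approximately). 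Hence $\d$ is m.b. if and only if $u_\lambda$ is.

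\emph{Innerness.} Assume $\|\d\|_{mb}<\infty$. Theorem \ref{s1} gives an invertible $S\in M_2(M)$ with $Su_\lambda(\cdot)S^{-1}$ a $*$-homomorphism. Then I would run the classical argument from the c.b. case, Christensen's theorem in \cite{95}, which derives innerness from similarity, keeping track that everything stays in $M_2(M)$. Concretely, $S^*S$ commutes with the range of $u_\lambda$ in the appropriate twisted sense, i.e. $u_\lambda(a)^*S^*S=S^*Su_\lambda(a^*)^{*\,*}$. Writing $S^*S=\begin{pmatrix}P&Q\\Q^*&R\end{pmatrix}$ and solving the corner equations should yield $\t=-P^{-1}Q\in M$ (up to sign) with $\d=\d^\t$.

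An alternative route that is cleaner for the norm formula: extend $\d$ to the von Neumann setting via Proposition \ref{p1}. The map $D: M'\otimes_{\max}A\to B(H)$, $D(x\otimes a)=x\d(a)$, is a derivation relative to the $*$-homomorphism $x\otimes a\mapsto x\pi(a)$. It is bounded with $\|D\|_{cb}\le\|\d\|_{mb}$ by \eqref{e10'} with $D=M'$ (exactly as $w$ in the proof of Theorem \ref{pam}). Christensen's theorem for c.b. derivations into $B(H)$ gives $T\in B(H)$ with $D=\d^T$ on the algebra generated by $M'\otimes 1$ and $1\otimes\pi(A)$, and $\|D\|_{cb}=2\inf\|T\|=d(T,(M'\cup\pi(A))')$. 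Vanishing of $D$ on $M'\otimes1$ forces $T\in M''=M$, so $\d=\d^T$ with $T\in M$. Moreover the distance is taken to $M\cap\pi(A)'$, and any $T'$ differing from $T$ by an element of $\pi(A)'\cap M$ also lies in $M$. This gives $\|D\|_{cb}=2\inf\{\|T\|: T\in M,\ \d=\d^T\}$, attained by the weak$^*$ compactness already in Christensen's statement.

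\emph{The reverse inequality.} If $\d=\d^T$ with $T\in M$, then $\d(a)=\pi(a)T-T\pi(a)$ is a difference of compositions of the $*$-homomorphism $\pi$ (m.b. norm $1$) with left or right multiplication by $T\in M$, each of m.b. norm $\le\|T\|$. This gives $\|\d\|_{mb}\le 2\|T\|$. A sharper estimate uses $T-z$ for $z\in\pi(A)'\cap M$, giving $\|\d\|_{mb}\le 2 d(T,\pi(A)'\cap M)$. Combined with $\|D\|_{cb}\le\|\d\|_{mb}$, all quantities coincide. The distance to $\pi(A)'$ in $B(H)$ agrees with the distance to $\pi(A)'\cap M$ when $T\in M$, by applying a normal conditional-expectation-free argument, namely Christensen's formula applied to $D$ itself, which has commutant $(M'\cup\pi(A))'=M\cap\pi(A)'$. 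Finally, $\|D\|=\|D\|_{cb}$ when $M'$ is cyclic follows from Remark \ref{cyc}, since $D$ is $M'$-bimodular.

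\emph{Main obstacle.} I expect the main obstacle to be the step $\|\d\|_{mb}\le 2 d(T,\pi(A)')$. It needs a sharp m.b. estimate for inner derivations, not just $2\|T\|$. The crude estimate via multiplications only gives $\|\d\|_{mb}\le 2\inf\|T\|$, which is what the first equality in \eqref{chr} actually requires. The second equality should then be inherited from Christensen's formula for $D$, so I would organize the proof so that the distance formula is read off from $D$ rather than proved directly for $\d$.
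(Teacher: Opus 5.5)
Your second route is the paper's argument. You get $\|D\|_{cb}\le\|\delta\|_{mb}$ from \eqref{e10}, applied with auxiliary algebra $M'$ and followed by the multiplication $*$-homomorphism $M'\otimes_{\max}M\to B(H)$. Christensen's theorem then applies to the $\Pi$-derivation $D$. Since $D(M'\otimes 1)=0$, $T\in M''=M$, and restricting to $1\otimes A$ gives $\delta=\delta^T$. The bound $\|\delta\|_{mb}\le 2\|\theta\|$ for $\theta\in M$ and Remark \ref{cyc} finish the argument. This correctly proves $\|\delta\|_{mb}=\|D\|_{cb}=2\min\{\|T\|: T\in M,\ \delta=\delta^T\}$, including attainment and the cyclic case. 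The $2\times 2$ detour through Theorem \ref{s1} is not needed.

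The step that fails is your claim that, for $T\in M$, the distance from $T$ to $\pi(A)'$ in $B(H)$ equals its distance to $\pi(A)'\cap M$ ``by Christensen's formula applied to $D$''. Christensen's formula for $D$ only gives $\|D\|_{cb}=2\,d(T,\pi(A)'\cap M)$. Applied to $\delta$ viewed as a $B(H)$-valued map, it gives $\|\delta\|_{cb}=2\,d(T,\pi(A)')$. Your claim would therefore force $\|\delta\|_{mb}=\|\delta\|_{cb}$ for every derivation implemented by an element of $M$, which is false.

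The paper's own Theorem \ref{T10} is a counterexample. Take $M=B(H)\bar\otimes M_\lambda$, with $\pi$ the inclusion of $A\otimes 1$. The derivation $x\otimes 1\mapsto[x\otimes 1,\Delta]$, with $\Delta\in B(H)\otimes E_n\subset M$, has cb-norm $\le 4$. The proof of that theorem, with $w$ the inclusion $E_n\subset M_\lambda$ and $v$ Haagerup's projection of cb-norm $\le 2$, shows its mb-norm is $\ge\sqrt n/4$.

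So the last term of \eqref{chr} can only mean $2\,d(\theta,\pi(A)'\cap M)$, and then it is immediate from $\{T\in M:\ \delta^T=\delta\}=\theta+(\pi(A)'\cap M)$. That is all the paper's one-line justification amounts to. The ``main obstacle'' you single out therefore does not exist. Relative to $\pi(A)'\cap M$ the needed bound is the trivial one you already wrote down, and relative to the full commutant in $B(H)$ it is false. Delete that sentence and the proof is complete.
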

      \begin{proof}  Assume $\|\d\|_{mb} <\infty $.
      Then by \eqref{e10} we have
      $\|Id \otimes \d :  M' \otimes_{\max} A \to M' \otimes_{\max} M\|_{cb} \le \|\d\|_{mb}$
      and hence $\|D\|_{cb} \le \|\d\|_{mb}$.
      Let $\Pi: M' \otimes_{\max} A \to B(H)$ be the continuous $*$-homomorphism
      defined by $\Pi(x\otimes a)=x\pi(a) $. It is easy to check that
      $D$ is a $\Pi$-derivation. By Christensen's theorem  
      there is $T\in B(H)$ with $\|T\|= (1/2)\|D\|_{cb}$ such that $D(z)=[\Pi(z), T]$ for any $z\in
      M' \otimes_{\max} A$. 
      Using $z\in M' \otimes 1$ (for which  $D(z)=0$) we find $T\in M''=M$ and using
      $z\in 1 \otimes A$ we find $\d(a)=[\pi(a),T]$, i.e. $\d=\d^T$.
      This proves $2\inf\{\|\t\| \mid \t\in M, \d=\d^\t\} \le 2\|T\|= \|D\|_{cb} \le  \|\d\|_{mb}$.
      Conversely, if $\t\in M $ and $ \d=\d^\t$ we clearly have $\|\d\|_{mb} \le 2 \|\t\|$,
      and hence $\|\d\|_{mb} \le 2 \inf\{\|\t\| \mid \t\in M, \d=\d^\t\}$. 
      This proves  $\|D\|_{cb} =  \|\d\|_{mb}$
      and the first equality in \eqref{chr}, the second one follows since $\d^T=\d^\t \Leftrightarrow T\in \t +\pi(A)'$.
      It also proves in passing that $\inf\{\|\t\| \mid \t\in M, \d=\d^\t\} $ is attained on $T$.
      The cyclic case follows by Remark \ref{cyc}.
     \end{proof}
     
     \def\t{\theta}
     
     The extension property in the next statement will play a special role in
     the proof of Theorem \ref{T10}.  
     \begin{cor}\label{ptruc} Assume that $ A \subset M$ and that $\pi$ is the inclusion map.
     Then any m.b. derivation $\d: A\to M$ extends to an m.b. derivation
     $\tilde \d: M \to M$ with  $\|\tilde \d\|_{cb} \le  \|\tilde \d\|_{mb}=  \|  \d\|_{mb}$.
     \end{cor}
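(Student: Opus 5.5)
By Theorem \ref{t5.6}, applied with $\pi$ the inclusion $A\subset M$, the m.b. derivation $\d$ is inner with implementing element in $M$. More precisely, there is $T\in M$ with $\d=\d^T$ on $A$, i.e. $\d(a)=[a,T]$ for all $a\in A$, and the infimum in \eqref{chr} is attained at $T$. Hence
$$\|\d\|_{mb}=2\|T\|=d(T,A')=2\inf\{\|\t\|\mid \t\in M,\ \d=\d^\t\}.$$
The natural candidate extension is then $\tilde\d=\d^T: M\to M$, namely $\tilde\d(x)=[x,T]$. It is a derivation (relative to the identity of $M$), it takes values in $M$ because $T\in M$, and it extends $\d$.

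Next I will show $\|\tilde\d\|_{mb}\le\|\d\|_{mb}$. Apply Theorem \ref{t5.6} to $\tilde\d$, now with $A$ replaced by $M$ and $\pi=Id_M$. This gives
$$\|\tilde\d\|_{mb}=d(T,M')\le d(T,A')\cdot(\text{?})$$
but this comparison goes in the wrong direction, since $M'\subset A'$ gives $d(T,M')\ge d(T,A')$. I therefore use the other expression in \eqref{chr}:
$$\|\tilde\d\|_{mb}=2\inf\{\|\t\|\mid \t\in M,\ \tilde\d=\d^\t\}\le 2\|T\|=\|\d\|_{mb}.$$
Alternatively one can argue directly: $x\mapsto xT$ and $x\mapsto Tx$ each have mb-norm at most $\|T\|$ on $M$ (left and right multiplication by an element of $M$, as used in the proof of Theorem \ref{s1}), so $\|\d^T\|_{mb}\le 2\|T\|$.

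The reverse inequality $\|\d\|_{mb}\le\|\tilde\d\|_{mb}$ comes from restriction. The inclusion $A\subset M$ is a $*$-homomorphism on $A$, and the mb-norm of $\d$ is computed with $A$ as the ambient algebra of its domain. So $\d=\tilde\d\circ i$ with $\|i\|_{mb}=1$ (a $*$-homomorphism between $C^*$-algebras, domain taken as itself), and \eqref{e21} gives $\|\d\|_{mb}\le\|\tilde\d\|_{mb}$. Finally, $\|\tilde\d\|_{cb}\le\|\tilde\d\|_{mb}$ holds in general since the min norm is dominated by the max norm. More concretely, by Christensen's formula $\|\tilde\d\|_{cb}=d(T,M')\le 2\|T\|$, because $0\in M'$.

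The only delicate point is the choice of the implementing element. One must take $T$ attaining the infimum in \eqref{chr} for $\d$ on $A$, so that $2\|T\|=\|\d\|_{mb}$ exactly, rather than an arbitrary implementing element, which could have larger norm. Theorem \ref{t5.6} guarantees that this infimum is attained, so the argument goes through.
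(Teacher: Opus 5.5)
Your proposal is correct and follows the paper's argument: take $T\in M$ attaining the infimum in \eqref{chr} and set $\tilde\d(x)=[x,T]$ on $M$, with equality of the mb-norms coming from $\|\d^T\|_{mb}\le 2\|T\|=\|\d\|_{mb}$ and from restricting along the $*$-homomorphism $A\to M$, exactly as you do. One caution: the side equality $2\|T\|=d(T,A')$ that you copied from \eqref{chr} is unused and not correct as written (for the optimal $T$ one has $\|T\|=d(T,A'\cap M)$, while $2d(T,A')=\|\d\|_{cb}$ can be much smaller, which is what \S\ref{con} exploits), and $\|\tilde\d\|_{cb}\le\|\tilde\d\|_{mb}$ is better justified by $\|\d^T\|_{cb}\le 2\|T\|$ than by ``min $\le$ max''.
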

     \begin{proof} We just set $\tilde \d(x)=[x,T]$ with  $T$ attaining the infimum in \eqref{chr}.
     \end{proof}

\section{A few facts  on row and column operator spaces}\label{bg1}

We need to recall a few facts from o.s. theory in preparation for the
proof of the main result of the next section.

The direct sum of two operator spaces $E_1\subset A_1$ and  
$E_2\subset A_2$ is the subspace $E_1\oplus E_2 \subset A_1\oplus A_2$, the latter being the  direct sum in the $\ell_\infty$ sense of the $C^*$-algebras.

Given two Hilbert spaces $H,K$ we may embed $B(H,K)$ in 
$B(H\oplus K)$ in a natural way
thus equipping $B(H,K)$ with an o.s. structure.
This applies in particular to $B(\CC, H)$ (the ``column" structure) 
and to $B(H^*, \CC)$ (the ``row" structure). The resulting
o.s. are denoted respectively by $H_c$ and $H_r$. In particular, we have
$R=(\ell_2)_r$ and $C=(\ell_2)_c$ denoting the  Hilbert spaces 
formed of row and column matrices in $B(\ell_2)$.
Let $t=\sum b_j \otimes x_j\in B \otimes H$ (here $B$ can be any $C^*$-algebra).
 We have
 $$\| t\|_{B\otimes_{\min} H_c} =\|\sum b_i^*b_j \langle x_i, x_j\rangle \|^{1/2}_B
\text{  and  } 
\| t\|_{B\otimes_{\min} H_r} =\|\sum b_ib^*_j \langle x_j, x_i\rangle \|^{1/2}_B.$$

Associated to a $C^*$-algebra $B$ are its opposite $B^{op}$
defined as the algebra obtained from $B$ after reversing the order of the product, and its complex conjugate $\ovl B$, which is naturally anti-isomorphic to $B$. Note that the map $x\mapsto x^*$ defines
a bona fide isomorphism from $B^{op}$ to $\ovl B$, so that they can be thought of as the same algebra.

There is a special self-dual o.s. structure on $H$ called the ``OH" one,
which produces the o.s. $H_{oh}$. We have
$$\| t\|_{B\otimes_{\min} H_{oh}} =\|\sum \ovl{ b_i}\otimes  b_j \langle x_i, x_j\rangle \|^{1/2}_{\ovl B \otimes_{\min} B} .$$

Let $(M, \tau)$ be a von Neumann algebra equipped with a standard
normalized  trace $\tau$. 
The preceding applies to the Hilbert space $L_2(\tau)$, whence
the o.s.  $L_2(\tau)_c$, $L_2(\tau)_r$ and $L_2(\tau)_{oh}$.

\begin{lem}\label{rcoh}
The inclusions $M \to L_2(\tau)_c$,
 $M \to L_2(\tau)_r$ and $M \to L_2(\tau)_{oh}$
  are all
 completely contractive.
\end{lem}
\begin{proof} 
Let $t=\sum b_j \otimes x_j\in B \otimes M$.
We have  $\|t\|^2_{\min}=\| t^* t\|_{\min}=
\| \sum b_i^* b_j \otimes x_i^* x_j\|_{\min}$, and
since the linear form $x\mapsto \tau(x)$ is completely
contractive from $M$ to $\CC$, the latter is
$$\ge \| \sum b_i^* b_j  \tau(x_i^* x_j)\|_B= \| \sum b_i^* b_j  \langle x_i, x_j\rangle\|_B. $$
 This settles the case of $M \to L_2(\tau)_c$. The case of $r$ is similar.
 The case of $oh$ follows by complex interpolation. Indeed, by \cite[p.137]{P4}
 when $B=M_n$ we have $(B\otimes_{\min } H_c, B\otimes_{\min } H_r)_{1/2}= B\otimes_{\min } H_{oh}$. This implies that denoting by
 $j: M \to L_2(\tau)$ the inclusion, the map
  $Id_B \otimes j : B\otimes_{\min } M \to B\otimes_{\min }  L_2(\tau)_{oh}$
  has norm $\le 1$, since it does so for both cases $c$ and $r$.
\end{proof}
\begin{rem}\label{RC}

Consider the ``diagonal" embedding $x\mapsto (x,x)$ of $H$ into
the o.s. $H_r \oplus H_c$.
We denote by $H_{rc} \subset H_r \oplus H_c$
the o.s. obtained by equipping $H$ with the o.s. structure induced
by $H_r \oplus H_c$ on this diagonal subspace.
For any $t\in B \otimes H$ we have
\begin{equation}\label{e122}\|t\|_{B \otimes_{\min} H_{rc}} =\max\{ \|t\|_{B \otimes_{\min} H_{r}} ,
\|t\|_{B \otimes_{\min} H_{c}} \}. \end{equation}
The space $H_{rc}$ is often (abusively)  denoted $H_r \cap H_c$ to reflect \eqref{e122}.
Similarly, one sets 
$R\cap C=(\ell_2)_{rc}$.

\end{rem}
\begin{lem}\label{rcoh1}
The inclusion $M \to L_2(\tau)_{rc}$ is  completely contractive. Moreover,
for any $H$ the identity on $H$ defines a completely contractive map $H_{rc} \to H_{oh}$. In particular,
 the identity on $L_2(\tau)$ defines a completely contractive map $L_2(\tau)_{rc} \to L_2(\tau)_{oh}$.
\end{lem}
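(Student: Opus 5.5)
The first assertion follows directly from Lemma \ref{rcoh} and \eqref{e122}. Let $j: M\to L_2(\tau)$ be the inclusion and $t=\sum b_k\otimes x_k\in B\otimes M$. Then
$$\|(Id_B\otimes j)(t)\|_{B\otimes_{\min}L_2(\tau)_{rc}}=\max\{\|(Id_B\otimes j)(t)\|_{B\otimes_{\min}L_2(\tau)_r},\ \|(Id_B\otimes j)(t)\|_{B\otimes_{\min}L_2(\tau)_c}\}.$$
Lemma \ref{rcoh} bounds each term by $\|t\|_{\min}$, so $j: M\to L_2(\tau)_{rc}$ is completely contractive. Equivalently, the map $x\mapsto(x,x)$ into $L_2(\tau)_r\oplus L_2(\tau)_c$ has both coordinates completely contractive, and the $\ell_\infty$ direct sum has the maximum norm.

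For the second assertion I would use complex interpolation. It suffices to check the matrix levels, i.e.\ $B=M_n$, since the cb-norm is attained there by Remark \ref{R8}. By the fact quoted in the proof of Lemma \ref{rcoh} (from \cite[p.137]{P4}), we have isometrically
$$M_n\otimes_{\min}H_{oh}=(M_n\otimes_{\min}H_c,\ M_n\otimes_{\min}H_r)_{1/2}.$$
Take $t\in M_n\otimes H$ with $\max\{\|t\|_{M_n\otimes_{\min}H_r},\|t\|_{M_n\otimes_{\min}H_c}\}\le 1$. The interpolation norm satisfies
$$\|t\|_{(X_0,X_1)_\theta}\le\|t\|_{X_0}^{1-\theta}\|t\|_{X_1}^{\theta},$$
which is the standard bound obtained from the constant function in the three-lines lemma, suitably scaled by $e^{\lambda z}$. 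Hence $\|t\|_{M_n\otimes_{\min}H_{oh}}\le\|t\|_c^{1/2}\|t\|_r^{1/2}\le 1$.

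The one point to check is that the two norms sit on the same underlying vector space $M_n\otimes H$, so the pair is compatible. This holds because $H_c$, $H_r$ and $H_{oh}$ are all structures on the same Hilbert space $H$. For infinite-dimensional $H$, I would reduce to finite-dimensional $H$: any $t$ lies in $M_n\otimes K$ for some finite-dimensional $K\subset H$, and the $c$, $r$ and $oh$ structures on $K$ are the ones induced from $H$, since all three are homogeneous and completely isometrically inherited by subspaces. This gives $\|Id: H_{rc}\to H_{oh}\|_{cb}\le 1$.

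The final assertion follows by taking $H=L_2(\tau)$. Composing it with the first assertion also recovers the $oh$ case of Lemma \ref{rcoh}. I expect no step to be a real obstacle; the only delicate points are the interpolation identity and the compatibility of the structures, both of which we are taking from \cite{P4}.
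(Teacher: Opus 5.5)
Your proof is correct and follows essentially the paper's route. The first assertion comes from Lemma \ref{rcoh} together with \eqref{e122}. The second comes from the contractivity of $A_0\cap A_1\to (A_0,A_1)_{1/2}$, which your bound $\|t\|_{1/2}\le \|t\|_0^{1/2}\|t\|_1^{1/2}$ supplies, combined with the identity $(M_n\otimes_{\min}H_r,\,M_n\otimes_{\min}H_c)_{1/2}=M_n\otimes_{\min}H_{oh}$; your reduction to finite-dimensional $H$ is harmless but unnecessary, since the paper applies that identity directly for arbitrary $H$.
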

\begin{proof} 
By classical interpolation theory (see e.g. \cite{BeL}) for any interpolation pair $(A_0 , A_1)$ we have   $\|A_0 \cap A_1\to (A_0 , A_1)_{1/2} \|\le 1$, and hence for any $B$  $$\|(B\otimes_{\min } H_r) \cap (B\otimes_{\min } H_c) \to 
(B\otimes_{\min } H_r, B\otimes_{\min } H_c)_{1/2} \|\le 1.$$
Since    $(B\otimes_{\min } H_r, B\otimes_{\min } H_c)_{1/2}= B\otimes_{\min } H_{oh}$ when $B=M_n$,  the result follows by \eqref{e122}.
\end{proof}
\begin{rem}\label{RC1}
Let $(e_j)$ be an orthonormal basis of $H$.
Then for any finitely supported $j\mapsto b_j \in B$
we have
$\|\sum b_j \otimes e_j \|_{B \otimes_{\min} H_{r}} =
\|\sum b_j b_j^*\|^{1/2}$,
$\|\sum b_j \otimes e_j \|_{B \otimes_{\min} H_{c}} =
\|\sum b^*_j b_j\|^{1/2}$ and
$$\|\sum b_j \otimes e_j \|_{B \otimes_{\min} H_{rc}}
= \max\{\|\sum b_j b_j^*\|^{1/2} ,
\|\sum b^*_j b_j\|^{1/2}\}. $$
Moreover
$$\|\sum b_j \otimes e_j \|_{B \otimes_{\min} H_{oh}}
= \|\sum \ovl{b_j} \otimes b_j \|_{\ovl B \otimes_{\min} B }^{1/2} .$$
\end{rem}
\begin{rem}\label{homo} An o.s.  $E$ is called homogeneous if $\|u\|_{cb}=\|u\|$
for any $u: E \to E$. 
The spaces $H_r,H_c,H_{oh}$ and $H_{rc}$ are all examples of homogeneous o.s. 
This implies that   any operator subspace $K\subset H_r$ can be identified with $K_r$ and similarly for $c, oh$ and $rc$.
\end{rem}

 \begin{rem}[On $R_n\cap C_n$]\label{rcapc}
         Consider in the direct sum $M_n \oplus M_n$
     the $n$-dimensional subspace $F_n \subset M_n \oplus M_n$ spanned by the family
   $\{e_{1j} \oplus  e_{j1} \}$.  Here we view $M_n \oplus M_n \subset M_{2n}$
   by identifying the elements of $M_n \oplus M_n$ with block diagonal $2n \times 2n$-matrices. This space is often denoted (abusively)  by
         $R_n\cap C_n$   in the context of interpolation between pairs of o.s. It plays an important role in connection
   with free probability questions (see \cite{HP2}).
   \end{rem}

For all these facts we refer to \cite{ER,P4,Pau}.

      \section{One more characterization of injectivity}\label{inj}
           \def\b{\beta}

      Given a f.d. o.s. $F$ and a $C^*$-algebra (or  an o.s.) $B$, we denote
     $$C(F,B)=\inf\{ \|w\|_{cb}  \|v\|_{cb}\}$$
     where the inf runs over all factorization of $Id_{F}$ through $B$ of the form
  $  F {\buildrel {w}\over \rightarrow} B {\buildrel {v}\over \rightarrow}  F $.
     
 In particular, 
 concerning the space $F_n$ (or ``$R_n\cap C_n$") defined in
   Remark \ref{rcapc},
    the following result was proved in  \cite{HP2}:

      \begin{thm}\label{hp2}
   For any  $C^*$-algebra $B$ with the WEP we have
         $$C(F_n,B)\ge  (1+\sqrt{n})/2 ,$$
         and equality holds when $B=B(H)$ or $B=M_{2n}$.
   \end{thm}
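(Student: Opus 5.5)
The plan is to prove the lower bound by a tensor-norm duality argument that uses the WEP of $B$ only once, through Kirchberg's identity $\C\otimes_{\min}B=\C\otimes_{\max}B$ (Remark \ref{kir}). The upper bound will come from an explicit factorization through $M_{2n}$.

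Fix a factorization $F_n \buildrel w\over\rightarrow B\buildrel v\over\rightarrow F_n$ of the identity, and let $f_j=e_{1j}\oplus e_{j1}$ be the basis of $F_n$ from Remark \ref{rcapc}. Set $\xi_j=w(f_j)\in B$. Let $U_1,\dots,U_n$ be the free unitary generators of $\C$ (or of $C^*(\F_n)$). The key steps, in order, are as follows.

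\emph{Step 1 (pushing through $w$ in the min-norm).} We have
$$\|\textstyle\sum U_j\otimes \xi_j\|_{\C\otimes_{\min}B}\le \|w\|_{cb}\,\|\textstyle\sum U_j\otimes f_j\|_{\C\otimes_{\min}F_n}.$$
By the Haagerup--Pisier free Khintchine inequality, the last norm is controlled by the row/column quantities of Remark \ref{RC1}, both of which equal $1$ for the $f_j$. For the Haar-unitary realization the relevant exact norm is the one computed in \cite{HP2}, which is where the $\sqrt n$-type constant enters.

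\emph{Step 2 (WEP).} Since $B$ has the WEP, the same element has the same norm in $\C\otimes_{\max}B$. In the max tensor product we may then use commuting representations. In particular, we pair $\sum U_j\otimes\xi_j$ with its conjugate $\sum \ovl{U_j}\otimes\ovl{\xi_j}$, via the $*$-homomorphism $\C\otimes_{\max}\ovl{\C}\to\CC$ ($U_j\otimes\ovl{U_j}\mapsto$ the trivial representation). This gives a bound for $\|\sum \ovl{\xi_j}\otimes\xi_j\|_{\ovl B\otimes_{\max} B}$ by $\|\sum U_j\otimes\xi_j\|^2_{\max}$, up to the constant-term correction that produces the ``$1+$'' in $(1+\sqrt n)$.

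\emph{Step 3 (pushing back through $v$).} By Proposition \ref{imp} applied to decompositions of $v$, or more simply by the cb-version on the min side, we get
$$\|\textstyle\sum \ovl{f_j}\otimes f_j\|_{\ovl{F_n}\otimes_{\min}F_n}\le \|v\|_{cb}^2\,\|\textstyle\sum \ovl{\xi_j}\otimes\xi_j\|.$$
The left-hand side is computed explicitly in $\ovl{M_{2n}}\otimes M_{2n}$: it contains the rank-one ``maximally entangled'' part of size $n$ together with the off-diagonal part. Comparing the resulting inequalities yields $\|w\|_{cb}\|v\|_{cb}\ge (1+\sqrt n)/2$.

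For the equality cases, it suffices to treat $B=M_{2n}$, since $M_{2n}\subset B(H)$ completely contractively complemented. There I would take $w$ to be the inclusion $F_n\subset M_n\oplus M_n\subset M_{2n}$. For $v$ I would take a projection onto $F_n$ obtained by averaging the two compressions $x\mapsto (x_{1j})$ and $x\mapsto(x_{j1})$ with suitable weights and twisting by the unitary group of $M_n$. Optimizing the weights should give cb-norm exactly $(1+\sqrt n)/2$, as in \cite{HP2}.

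The main obstacle is getting the sharp constant rather than just an order $\sqrt n$ bound. This requires an exact computation of the norms of $\sum U_j\otimes f_j$ and $\sum\ovl{f_j}\otimes f_j$, and not merely the factor-$2$ free Khintchine estimate. I would first try to carry out the argument with the unitaries $U_j$ replaced by a free semicircular or free-circular family, for which Haagerup's exact norm formulas are available, and to use the unit $1\otimes 1$ explicitly to produce the additive $1$.
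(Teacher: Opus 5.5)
There is a genuine gap. Once the details are corrected, Steps 1--3 give a true inequality, but a vacuous one, and no choice of test families in this scheme can give a lower bound above $1$. What your chain proves is
\[
\|\textstyle\sum \ovl{f_j}\otimes f_j\|_{\ovl{F_n}\otimes_{\min}F_n}\le \|v\|_{cb}^2\,\|w\|_{cb}^2\,\|\textstyle\sum U_j\otimes f_j\|^2_{\C\otimes_{\min}F_n}.
\]
Now run the same argument inside $B(H)$, which has the WEP. It gives $\|\sum \ovl{x_j}\otimes x_j\|_{\min}\le \|\sum U_j\otimes x_j\|_{\min}^2$ for all $x_j\in B(H)$. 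By injectivity of the min-norm, this holds for families in any operator space, including $F_n$ itself. So you are only transporting through $w$ and $v$ an inequality between min-norms that $F_n$ already satisfies with constant $1$. Since such an inequality is inherited by every subspace of $B(H)$, it carries no information about how $\mathrm{Id}_{F_n}$ factors through $B$.

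The actual numbers confirm this. We have $\|\sum U_j\otimes f_j\|_{\min}=\max\{\|\sum U_jU_j^*\|^{1/2},\|\sum U_j^*U_j\|^{1/2}\}=\sqrt n$, not $1$. In $\ovl{F_n}\otimes_{\min}F_n$, the blocks $\sum \ovl{e_{1j}}\otimes e_{1j}$ and $\sum \ovl{e_{j1}}\otimes e_{j1}$ are rank one of norm $\sqrt n$, and the mixed blocks are partial isometries. So $\|\sum\ovl{f_j}\otimes f_j\|_{\min}=\sqrt n$, and your chain yields only $\|v\|_{cb}\|w\|_{cb}\ge n^{-1/4}$. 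Two further errors in Step 2: the trivial representation sends every $\ovl{U_i}\otimes U_j$ to $1$, so it produces $\ovl{\sum_i\xi_i}\otimes\sum_j\xi_j$. Isolating the diagonal requires the state $\ovl{U_s}\otimes U_t\mapsto\delta_{s,t}$. Also, nothing in the scheme produces the additive $1$ in $(1+\sqrt n)$.

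The missing idea is to use the WEP to move the whole factorization problem into $B(H)$, which is all the paper does. Since $B$ has the WEP, the inclusion $B\to B^{**}$ factors completely contractively as $B\subset B(H)\,{\buildrel {P}\over \rightarrow}\, B^{**}$. Since $F_n$ is finite dimensional, $v$ extends to $v^{**}:B^{**}\to F_n$ with the same cb-norm. Hence
$F_n\,{\buildrel {w}\over \rightarrow}\, B\subset B(H)\,{\buildrel {v^{**}P}\over \longrightarrow}\, F_n$
factors $\mathrm{Id}_{F_n}$ through $B(H)$ with no larger constant, so $C(F_n,B)\ge C(F_n,B(H))$. The sharp value $(1+\sqrt n)/2$, with equality for $B(H)$ and $M_{2n}$, is then the injective case from \cite{HP2}. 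Your outline of the equality case (reduce to $M_{2n}$ and use an explicit projection as in \cite{HP2}) is fine as far as it goes. The lower bound, however, needs this reduction plus the injective computation, not a Khintchine-type comparison of tensor norms.
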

   
 This is proved in \cite{HP2} for $B$ either nuclear or injective.
   If $B$ has WEP, the inclusion $B\to B^{**}$ factors through $B(H)$
   via complete contractions. Since $v: B\to F_n$ extends to $v^{**}:B^{**} \to F_n$, a factorization through $B(H)$ appears inside the one through $B$ and  the preceding statement follows immediately.
   
In particular, for any injective von Neumann algebra $M$.\
there is $c>0$ such that we have $C(F_n,M)\ge  c\sqrt{n}  $ for all $n$. 
 In Theorem \ref{t3.1}, we will prove
that conversely if for some $c>0$ we have $C(F_n,M)\ge  c\sqrt{n}  $  for all $n$, then $M$ is injective.

This converse was already conjectured 
     around the time of  \cite{HP2}. In informal discussions with the first author,
            partial unpublished (and unwritten) results were obtained by
            Franz Lehner at the time of his 1997 PhD thesis, these covered
           the finite case when $M= L_\infty([0,1]) \bar\otimes N$.
       More recently, for $M=L(G)$  von Neumann algebra  of a discrete group $G$, it is easy to see that
               $C (F_n, M) \ge c \sqrt n$ implies $\|\sum \lambda_G(s_j) \|\ge  c^2  n$ for any $n$-tuple $(s_j)$  in $G$, and if this holds for all $n$  it follows that $G$ is amenable by Andreas Thom's \cite{Th}.

          \def\M{{\cal M}}

            \begin{thm}\label{t3.1}     A von Neumann algebra   $M$
     is injective if and only if  there is $c>0$ such that $C (E_n, M) \ge c \sqrt n$ for all $n$.                
             
           \end{thm}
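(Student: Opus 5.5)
The ``only if'' direction is already available: if $M$ is injective it has the WEP, so Theorem \ref{hp2} gives $C(F_n,M)\ge (1+\sqrt n)/2$. Here $E_n$ in the statement is the space $F_n=R_n\cap C_n$ of Remark \ref{rcapc}. For the converse I argue by contraposition: assuming $M$ is not injective, I will show that $\liminf_n C(F_n,M)/\sqrt n=0$.

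\textbf{Step 1: reduce to a finite von Neumann algebra.} Decompose $M=M_{fin}\oplus M_{\infty}$ into its finite and properly infinite parts. A properly infinite summand contains a copy of $B(\ell_2)$ with a conditional expectation onto it, so it does not prevent injectivity. It also gives no upper bound of order $c\sqrt n$ to worry about, since a direct summand only enlarges $C$. The real case is the one where a finite summand $pMp$ is non-injective. Since $C(F_n,pMp)\ge C(F_n,M)$ (compress by $p$), it suffices to treat a non-injective finite $M$, and then a non-injective finite summand with a faithful normal trace $\tau$. I would in fact reduce to the separable or $\sigma$-finite case, cutting down to a countably generated non-injective subalgebra that is the range of a normal conditional expectation.

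\textbf{Step 2: Connes/Haagerup characterization of injectivity.} For a finite $(M,\tau)$, non-injectivity (via Connes' theorem, in Haagerup's form) yields unitaries $u_1,\dots,u_k\in M$ and a $\delta>0$ such that
\[
\Bigl\|\sum_{j=1}^k \ovl{u_j}\otimes u_j\Bigr\|_{\ovl M\otimes_{\min} M}\le (1-\delta)k .
\]
Tensor powers then produce, for $m$ large, $n=k^m$ unitaries $U_i$ (the products $u_{j_1}\otimes\cdots\otimes u_{j_m}$, realized in $M$ via words $u_{j_1}\cdots u_{j_m}$, or more safely in $M\bar\otimes M\bar\otimes\cdots$, to which one returns by a cut-down and matrix-unit trick) with $\|\sum \ovl{U_i}\otimes U_i\|\le n^{1-\gamma}$ for some $\gamma>0$. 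I would first try to use words in free-like position, mimicking Thom's argument for $L(G)$.

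\textbf{Step 3: factorize $Id_{F_n}$ through $M$.} Define $w:F_n\to M$ by $w(e_{1i}\oplus e_{i1})=U_i$, and $v:M\to F_n$ by $v(x)=\sum_i \tau(U_i^*x)\,(e_{1i}\oplus e_{i1})$. The $U_i$ must be $\tau$-orthonormal; I ensure this by adjoining a tensor factor of orthonormal unitaries, or by taking distinct words. Then $vw=Id$. For $\|v\|_{cb}$, note that $v$ is the composition $M\to L_2(\tau)_{rc}\to F_n$; by Lemma \ref{rcoh1} the first map is c.c., and the projection onto $\mathrm{span}\{U_i\}$ in $L_2(\tau)_{rc}$ is a complete contraction by homogeneity (Remark \ref{homo}). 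Hence $\|v\|_{cb}\le1$. For $\|w\|_{cb}$, the crude bound is $\|w\|_{cb}\le\sqrt n$, which gives only $C\lesssim\sqrt n$. To gain, I interpolate as in Lemma \ref{rcoh}: bound $w$ on the OH structure using $\|\sum \ovl{U_i}\otimes U_i\|^{1/2}\le n^{(1-\gamma)/2}$, and compare $F_n$ with $OH_n$, since the identity $R_n\cap C_n\to OH_n$ is c.c. by Lemma \ref{rcoh1}. This yields $\|w\|_{cb}\le\|w:OH_n\to M\|_{cb}$.

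\textbf{Main obstacle.} The difficulty is computing $\|w:OH_n\to M\|_{cb}$. The known formula says that for unitaries this cb-norm equals $\|\sum\ovl{U_i}\otimes U_i\|^{1/2}$ only when the latter is measured in the right (min) norm, and a careful check of this identification (Pisier's formula for maps out of $OH$) is required. If that route fails, I would fall back on the free-probability version: choose the $U_i$ so that they behave like free Haar unitaries, which gives $\|w\|_{cb}\le 2\sqrt{n}^{\,1-\gamma'}$ via Haagerup's inequality. Either way, the result is $C(F_n,M)=o(\sqrt n)$, which contradicts the hypothesis.
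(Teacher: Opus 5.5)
\textbf{There is a genuine gap.} The missing piece is a way to produce the orthonormal unitaries your Step 3 needs. There is also a separate error in Step 1.

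\textbf{What is right.} Your ``only if'' direction is fine. Your Step 3 estimates are also correct \emph{provided} the $U_i$ are $\tau$-orthonormal unitaries of $M$. In that case $\|v\|_{cb}\le 1$ by Lemma \ref{rcoh1} and Remark \ref{homo}, and $\|w\|_{cb}\le\|\sum_i\ovl{U_i}\otimes U_i\|_{\min}^{1/2}$ via the c.c.\ map $F_n\to OH_n$. The formula for the cb-norm of $OH_n\to M$, which you single out as the main obstacle, is standard and is used verbatim in the paper. The difficulty lies elsewhere.

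\textbf{The gap in Steps 2--3.} Nothing in your proposal produces an orthonormal family with $\|\sum_i\ovl{U_i}\otimes U_i\|\le n^{1-\gamma}$.
\begin{itemize}
\item \emph{Tensor powers.} They live in $M\bar\otimes\cdots\bar\otimes M$, and there is no general way back into $M$. Your ``cut-down and matrix-unit trick'' would need $M\bar\otimes M$ to sit completely contractively complemented inside $M$. This fails, e.g., for $M=L(\F_2)$: by solidity, $L(\F_2)\bar\otimes L(\F_2)$ does not even embed in it.
\item \emph{Words.} Words $u_{j_1}\cdots u_{j_m}$ do stay in $M$ and give $\sum\ovl{U}\otimes U=T^m$ with $T=\sum_j\ovl{u_j}\otimes u_j$. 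This is the trick the paper uses, as $\|T\|^{2m}=\|(T^*T)^m\|$. But words may coincide or be linearly dependent. Then $w$ is not injective and no $v$ with $vw=Id_{F_n}$ exists at all. Discarding repetitions lowers the count with no control of the ratio, and distinct words need not be orthogonal.
\item \emph{An extra tensor factor.} Adjoining orthonormal unitaries $V_i$ does not help. The product family $\{U_i\otimes V_\ell\}$ is orthonormal only if $\{U_i\}$ already is. The diagonal family $\{U_i\otimes V_i\}$ turns $T$ into $\sum_i\ovl{U_i}\otimes U_i\otimes\ovl{V_i}\otimes V_i$, for which you have no bound in terms of $\|T\|$. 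Both also take you out of $M$.
\item \emph{The free fallback.} It presupposes free Haar unitaries inside an arbitrary non-injective $M$, which you do not have.
\end{itemize}

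\textbf{The missing idea.} No good basis is needed. The paper takes an \emph{arbitrary} tuple of unitaries $(y_j)$ and the map $w_n:\ell_2^n\to L_2(\tau)$, $e_j\mapsto y_j$. Its Hilbert--Schmidt norm is exactly $\sqrt n$, whatever the Gram matrix.
\begin{itemize}
\item Proposition \ref{p2} gives $a_m(w_n)\,C(F_m,M)\le\|\sum_j\ovl{y_j}\otimes y_j\|^{1/2}$ for every $m$. Polar decomposition gives an $m$-dimensional subspace on which $w_n$ is bounded below by $a_m(w_n)$, and homogeneity of the $rc$ structure makes it a contractively complemented copy of $F_m$.
\item Since $w_n$ has rank at most $n$, the comparison $\|w_n\|_{S_2}\le c\sqrt{\log n}\,\|w_n\|_{S_{2,\infty}}$ produces an $m$ with $\sqrt m\,a_m(w_n)\gtrsim\sqrt{n/\log n}$.
\item The logarithm is then removed by passing to words.
\end{itemize}
Read contrapositively, this is exactly what repairs your Step 3.

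\textbf{The error in Step 1.} A properly infinite summand can be non-injective, e.g.\ $B(\ell_2)\bar\otimes L(\F_2)$ or any non-injective type III factor. So non-injectivity need not sit in the finite part. In type II$_\infty$ you can pass to a finite corner, since $C(F_n,M)\le C(F_n,pMp)$. A type III algebra, however, has no nonzero finite projections, so you need an extra device, as in the paper:
\begin{itemize}
\item $N=M\rtimes_\sigma\mathbb R$ is semifinite.
\item $N$ is non-injective when $M$ is, because $M$ is the fixed-point algebra of the dual action of the amenable group $\hat{\mathbb R}$.
\item By Takesaki duality, $N$ is the range of a conditional expectation on $M\bar\otimes B(L_2(\mathbb R))\cong M$.
\end{itemize}
Hence $C(F_n,M)\le C(F_n,N)$, and you are reduced to the semifinite case.
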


         To prove  the ``if part"  we will use Haagerup's criterion of injectivity  in \cite[Lemma 2.3]{Ha1132}  which is originally due to Connes \cite{Co} for factors.
           The main point for now is as follows.
              \begin{pro}\label{p2} Let $H=\ell_2$.
           Let $M$ be a finite  von Neumann algebra  with normalized trace $\tau$.
            Let $\a: H_{rc} \to M$ be any c.b. map.
            Let $k: M \to L_2(\tau)_{r c}$ be the
            inclusion as in Lemma \ref{rcoh1}.
         Let $T = k\a : H_{rc} \to L_2(\tau)_{r  c}$.
         Then 
            \begin{equation}  \label{e16} \forall n\ge 1\quad   a_n(T) C (F_n, M) \le \|T\|_{cb} \le \|\a\|_{cb},\end{equation}
            where $$a_n(T)=\inf\{\|T-T'\| \mid rk(T')<n\}.$$
              \end{pro}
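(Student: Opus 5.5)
The plan is to build an explicit factorization of $Id_{F_n}$ through $M$ out of $\a$ and a Schmidt decomposition of $T$. I prove the outer inequality $a_n(T)\,C(F_n,M)\le\|\a\|_{cb}$ in full. For the middle inequality $a_n(T)\,C(F_n,M)\le\|T\|_{cb}$ as worded, I only identify the step that would have to be added, and I do not have an argument for it.

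\emph{The easy inequality.} The inequality $\|T\|_{cb}\le\|\a\|_{cb}$ is immediate from $T=k\a$ and the fact that $k$ is completely contractive (Lemma \ref{rcoh1}). I also note that $H_{rc}$ and $L_2(\tau)_{rc}$ are homogeneous (Remark \ref{homo}), hence completely isometric to one another on subspaces of equal dimension, so $\|T\|_{cb}=\|T\|$.

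\emph{The factorization.} We may assume $a_n(T)>0$, otherwise there is nothing to prove. Since $T$ is bounded between Hilbert spaces and $a_n(T)>0$, the spectral theorem applied to $T^*T$ gives orthonormal vectors $e_1,\dots,e_n\in H$ and $f_1,\dots,f_n\in L_2(\tau)$ such that, for all $x$ in $\mathrm{span}\{e_j\}$, $\|Tx\|\ge a_n(T)\|x\|$. If $T$ is not compact, the singular values are replaced by an approximate version with a loss of $\vp$. Let $Q:L_2(\tau)\to\mathrm{span}\{Te_j\}$ be the orthogonal projection, and let $P$ be the inverse of $T$ restricted to $\mathrm{span}\{e_j\}$, composed with $Q$.

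Let $J:F_n\to H_{rc}$ be the map $e_{1j}\oplus e_{j1}\mapsto e_j$. This is a complete isometry onto its range, since $F_n$ is $R_n\cap C_n$ (Remark \ref{rcapc}) and $H_{rc}$ is homogeneous.

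Set $w=\a J:F_n\to M$ and $v=J^{-1}Pk:M\to F_n$. Then $vw=J^{-1}PTJ=Id_{F_n}$. Moreover $\|w\|_{cb}\le\|\a\|_{cb}$. The map $P$ acts between $rc$-spaces, so by homogeneity $\|P\|_{cb}=\|P\|\le a_n(T)^{-1}$, and therefore $\|v\|_{cb}\le a_n(T)^{-1}$. This gives $C(F_n,M)\le\|\a\|_{cb}/a_n(T)$, which is the outer inequality.

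\emph{The main obstacle.} The stated middle inequality requires replacing $\|w\|_{cb}$ by something controlled by $\|T\|_{cb}=\|T\|$. The quantity $\|w\|_{cb}$ depends on $\a$ and not only on $T$, so the construction above cannot give this by itself.

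What would be needed is a modification of $w$ inside $M$. For example, one could compress $\a$ by elements of $M$, or average it over unitaries of $M$ using the trace $\tau$. The aim would be to keep $k w=TJ$ while forcing $\|w\|_{cb}$ down to $\|T\|$, for instance by showing that the infimum of $\|\a'\|_{cb}$ over all $\a'$ with $k\a'=T$ on $\mathrm{span}\{e_j\}$ is at most $\|T\|_{cb}$.

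I have not found an argument for this reduction. I also see no reason it should hold for an arbitrary finite $M$ and an arbitrary $\a$. The statement as worded therefore rests on this step, and what the construction above actually establishes is only $a_n(T)\,C(F_n,M)\le\|\a\|_{cb}$.
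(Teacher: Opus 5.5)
Your factorization is the paper's own proof. You take an $n$-dimensional subspace on which $T$ is bounded below by (essentially) $a_n(T)$. You follow $k$ by the orthogonal projection onto its image and by the inverse of $T$ there. Then you use homogeneity of the $rc$-spaces, $\|k\|_{cb}\le 1$ (Lemma \ref{rcoh1}), and the identification of that subspace with $F_n$.

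The paper's argument also ends with $C(F_n,M)\le a_n(T)^{-1}\|\alpha\|_{cb}$ and never involves $\|T\|_{cb}$. This outer inequality is the only form used afterwards, in the corollary and in the proof of Theorem \ref{t3.1}.

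Your $\varepsilon$-loss for non-compact $T$ is more careful than the paper. The paper asserts an exact $n$-dimensional subspace with $\|Tx\|\ge a_n(T)\|x\|$. This fails, for instance, for $T$ diagonal with entries $1-1/j$: there $a_n(T)=1$ for all $n$, but $\|Tx\|<\|x\|$ for every $x\neq 0$. Letting $\varepsilon\to 0$ costs nothing.

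Your doubt about the middle inequality is well founded, and it can be settled: as worded, it is false. Since $\|T\|_{cb}=\|T\|=a_1(T)$, it would assert $C(F_n,M)\le a_1(T)/a_n(T)$ for every c.b. $\alpha$. Here is a counterexample.
\begin{itemize}
\item Take $n\ge 2$ and $M=M_m$ with its normalized trace, where $m^2\ge n$.
\item Let $\alpha$ send $e_1,\dots,e_n$ to $\tau$-orthonormal unitaries of $M_m$ and vanish on the remaining basis vectors. Then $\alpha$ has finite rank, hence is c.b.
\item $T$ is a partial isometry of rank $n$, so $a_1(T)=a_n(T)=1$, and the middle inequality would give $C(F_n,M_m)\le 1$.
\item But $M_m$ is nuclear, hence has the WEP, so Theorem \ref{hp2} gives $C(F_n,M_m)\ge (1+\sqrt n)/2>1$.
\end{itemize}
So the correct content of the proposition is $\|T\|_{cb}\le\|\alpha\|_{cb}$ together with $a_n(T)\,C(F_n,M)\le \|\alpha\|_{cb}$, which is exactly what you proved. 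No modification of $w$ inside $M$ can rescue the stronger claim.
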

                       
              \begin{proof} Recall
                            $\|T\|=a_1(T) \ge \cdots\ge a_n(T)$ and $a_n(T)=\lambda_n(|T|).$
              The polar factorization shows (as is classical) that there is a
              subspace $F\subset H_{rc}$ of dimension $n$  such that  $\|T(x)\| \ge a_n(T) \|x\|$
              for any $x\in F$. We may assume  $  a_n(T)>0$. It follows that
              $\|{T^{-1}}_{|T(F)} \|\le a_n(T)^{-1}$.
              Let $P: L_2(M)_{r  c} \to T(F)$ be the orthogonal projection.
              Note $\|P\|_{cb} \le 1$ and $\|{T^{-1}}_{|T(F)} \|_{cb}=\|{T^{-1}}_{|T(F)} \| $ by homogeneity (see Remark \ref{homo}). We have $Id_F= {T^{-1}}_{|T(F)} P k \a_{|F} $ and $\|k\|_{cb}\le 1$ by Lemma \ref{rcoh}.
              Therefore
              $$C(F,M)\le \|{T^{-1}}_{|T(F)} \|_{cb}\|\a\|_{cb}   \le a_n(T)^{-1}\|\a\|_{cb} .$$
             By homogeneity again,   $F\simeq F_n$ completely isometrically,
             so that $C(F,M)=C(F_n,M)$ and the announced result follows.            
                \end{proof} 
                
                  \begin{cor} 
                  Assume  $C (F_n, M) \ge c \sqrt n$ for all $n$. 
                  Then for any $n$ and any $\a: H_{rc} \to M$ we have
                  $$\sup \nl_n n^{1/2} a_n(k\a)   \le c^{-1} \|\a\|_{cb}.$$
                 \end{cor}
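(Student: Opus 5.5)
This follows directly from Proposition \ref{p2}, applied to $M$, which is implicitly finite with normalized trace $\tau$ as there. We fix $\a: H_{rc}\to M$ c.b. and set $T=k\a: H_{rc}\to L_2(\tau)_{rc}$.

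For each $n\ge 1$, \eqref{e16} gives $a_n(T)\, C(F_n,M)\le \|\a\|_{cb}$. Inserting the hypothesis $C(F_n,M)\ge c\sqrt n$ yields
$$c\, n^{1/2} a_n(T) \le a_n(T)\, C(F_n,M)\le \|\a\|_{cb},$$
which is valid because $a_n(T)\ge 0$. Dividing by $c>0$ gives $n^{1/2}a_n(k\a)\le c^{-1}\|\a\|_{cb}$. This bound does not depend on $n$, so taking the supremum over $n$ gives the stated inequality.

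The case $a_n(T)=0$ needs no separate argument, since the displayed inequality is then trivial; in the proof of Proposition \ref{p2} that case was set aside only to invert $T$ on $F$. There is no real obstacle here. The one point to check is that the notation is consistent: the hypothesis is phrased with the spaces $F_n$ of Remark \ref{rcapc}, which are the $E_n$ of Theorem \ref{t3.1}, and $k$ is the inclusion $M\to L_2(\tau)_{rc}$ of Lemma \ref{rcoh1}. Both agree with the setting of Proposition \ref{p2}.
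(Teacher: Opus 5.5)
Your proof is correct and is the intended argument: the paper gives no separate proof because the corollary follows at once from Proposition \ref{p2}, by inserting $C(F_n,M)\ge c\sqrt n$ into \eqref{e16}, dividing by $c$, and taking the supremum over $n$. You rightly use the outer bound $a_n(T)\,C(F_n,M)\le \|\alpha\|_{cb}$, which is exactly what the proof of Proposition \ref{p2} establishes, since the factorization there goes through $\alpha_{|F}$ and its cost is therefore controlled by $\|\alpha\|_{cb}$.
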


             \def\g{\gamma}

   \begin{proof}[Proof of Theorem \ref{t3.1}] 
 To start with we claim that it suffices to prove the theorem in the finite case.
\\ Indeed, if $N\subset M$ is a von Neumann  subalgebra admitting
           a c.c. projection $P: M \to N$ we have obviously
           $$C(F_n, M) \le C(F_n, N).$$
           Therefore the assumption on $M$
           is inherited by $N$, and 
           in particular by $N=pM$ for any central projection $p$ in $M$.
           This shows that the three terms in the decomposition  $M=M_I\oplus M_{II}\oplus M_{III}$ in 
           types I, II (semifinite)  and III all inherit the assumption.
           The type I case is easy since it is always injective.
           Using the subalgebras $pM$ with $p$ finite, 
           which have a weak*-dense directed union, the semifinite case is easily reduced to the finite one.
            Then using  crossed products and Takesaki's
           duality theorem as in \cite[p. 198]{Ha1132} one can treat the type III case.
 This proves our claim that we may assume $M$ finite.

    Assume  $M$ finite and $C (F_n, M) \ge c \sqrt n$ for all $n$. \\
    We will use the traditional notation $OH_n=(\ell_2^n)_{oh}$.
 We have a natural c.c. inclusion
                 $j_n : F_n \to OH_n$
                 (see Lemma \ref{rcoh1}). Let $e_j$ (resp. $t_j$) be an orthonormal basis
                 in $F_n$ (resp. $OH_n$) with (say) $j_n(e_j)= t_j$. Let $y_j\in M$.
                 Consider the map $\g_n : OH_n \to M$
                 taking $t_j$ to $y_j$. 
                 Let $\a_n= \g_nj_n : F_n \to M$ so that $a_n(e_j)= y_j$.
                 We have
                 $$ \| \a_n\|_{cb} \le  \| \g_n\|_{cb}=  \|\sum  \bar{y_j} \otimes y_j \|_{\min}^{1/2} .$$
     Then \eqref{e16} yields the following:
                     for any $(y_j)$ in $M$ the map $\g_n: OH_n \to M$ taking   $(t_j)$ to $(y_j)$
                       satisfies
                       $$\| k \g_n \|_{S_{2,\infty} } =\sup\nl_m \sqrt m  a_m(k \g_n)  \le c'  \|\sum  \bar{y_j} \otimes y_j \|_{\min}^{1/2},$$
                       where $c'=1/c$.       
In other words the map $w_n : \ell_2^n \to L_2(\tau) $
taking $e_j$ to $y_j$ satisfies
 \begin{equation}  \label{e15}
 \| w_n \|_{S_{2,\infty} } \le c' \|\sum  \bar{y_j} \otimes y_j \|_{\min}^{1/2}. \end{equation}
   Since $w_n$ is of rank at most $n$ 
   we have for all $n>1$
   $$\|w_n\|_{S_2}\le \|w_n\|_{S_{2,\infty}} c''\sqrt{\log n}.$$
   Therefore for any $n$-tuple of unitaries $(y_j)$ in $M$
   $$\sqrt n \le c''\sqrt{\log n} \ c' \|\sum  \bar{y_j} \otimes y_j \|_{\min}^{1/2}.$$
   We  then use a well known trick.
   Let $T= \sum  \bar{y_j} \otimes y_j $. Note
   $\|T\|^{2m}=\|(T^*T)^m\|$ and 
   $(T^*T)^m$ when developed is a sum of $n^{2m}$ 
   terms of the form $\bar U\otimes U$ with $U$ unitary in $M$.

   Therefore
   $$ n^m \le (c''c')  \sqrt{\log(n^{2m}) } \|T\|^{m}$$
   and hence
   $$n \le [c''c'  \sqrt{\log(n^{2m}) } ]^{1/m} \|T\|$$
   and letting $m\to \infty$ we obtain
   $$n \le  \|T\|.$$
   This shows that  $\|\sum  \bar{y_j} \otimes y_j \|_{\min} =n$
   for any $n$-tuple of unitaries in $M$ and any $n$.
   This settles the case of factors,
   but since $pM$ satisfies the same assumption
   for any central projection $p\in M$,
   the same conclusion holds holds for $pM$.
   By Haagerup's criterion in \cite[Lemma 2.3]{Ha1132} (due to Connes \cite{Co} for factors), $M$ is injective.
   This proves the ``if part". The converse holds by   \cite[Th. 4.1]{HP2}.
 \end{proof}

 \begin{rem}[The case of $OH_n$] The preceding argument is quite similar to one previously used by Marius Junge in \cite{Ju-oh} for the operator Hilbert space $OH_n$ in place of $F_n=R_n\cap C_n$.
 The latter space $OH_n$ (for which we refer to \cite[\S 7]{P4}) is a copy of the $n$-dimension Hilbert space $\ell_2^n$
 sitting  isometrically in $B(H)$ in such a way that
   its orthonormal basis corresponds to elements $t_j\in B(H)$
 such that for any other Hilbert space $K$ and any $a_j \in B(K)$ we have the characteristic identity
 $$\|\sum a_j \otimes t_j \|_{B(K) \otimes_{\min} B(H)} =\|\sum \ovl{a_j} \otimes a_j \|^{1/2}_{\ovl{B(H)} \otimes_{\min} B(H)}.$$
  In  \cite{Ju-oh}  Junge shows that
 for any WEP $C^*$-algebra $A$ we have $C(OH_n,A)\approx (n/\log(n))^{1/2}$
  except in the very special case when $A=C \otimes_{\min} B$ with
 $C$ commutative and $B$ f.d.  in which case (assuming  $C$ infinite dimensional)
 $C(OH_n,A)\approx n^{1/2}$. Furthermore he shows
 that  $C(OH_n,A)\ge c (n/\log(n))^{1/2}$ for all $n>1$  (for some $c>0$) only if $A$ has the WEP.
 When $A$ is a von Neumann algebra   this yields a characterization of injectivity.
 While our argument for $F_n$ seems somewhat intertwined with Junge's for $OH_n$, we do not see
 either one as a consequence of the other.\\
 The reader will of course have noticed that
 the preceding proof shows that for   $M$ to be injective it
 suffices to have an estimate of the form
 $C(F_n,M) \ge c n^{1/2} (\log(n))^{-\b}$ for any fixed $\b>0$.
 Junge made a similar observation for $C(OH_n,A) \ge c n^{1/2} (\log(n))^{-\b}$  in his characterization of the WEP.
 \end{rem}

     \section{m.b. versus c.b. :   concrete examples}\label{con}
     
     In this section our goal will be to produce examples of derivations
     or unital homomorphisms that are c.b. but not m.b.
     Part of our original  motivation was to correct  a  certain erroneous
     statement  from \cite{[19]}.
     
             We will crucially use
  the o.s. structure of   the span of the free generators
   in $C^*_\lambda(\F_n)$ for the free groups $\F_n$ or $\F_\infty$.
Let $(g_j)_{1\le j\le n}$ denote the set of free generators
of   $\F_n$, or the first $n$ free generators of $\F_\infty$.
Let $G=\F_n$ or $G=\F_\infty$.
 We denote by $\lambda$ and $\rho$ the left and right regular representations of    $G$. 
  Let $M_\lambda=L(G)$  (i.e. $M_\lambda=C^*_\lambda(G)''$),  the von Neumann algebra generated by $\lambda$   with its   normalized trace  $\tau$.
Let $E_n \subset M_\lambda$ denote the linear span of
 $\{\lambda(g_j)\mid {1\le j\le n}\}$. 
 
  Haagerup   proved in \cite{Hainv}
   that the natural orthogonal projection $P: M_\lambda \to E_n$ 
   has $\|P\|_{cb} \le 2$, from which follows that
 \begin{equation}\label{e22}
   C(E_n,M_\lambda)\le 2.
   \end{equation}

           \begin{rem}  
 
      Let $H=\ell_2(G)$ ($G=\F_n$ or $G=\F_\infty$)  and let $(\d_t)_{t\in G} $ be its standard orthonormal basis.
     For any $g\in \{g_j\mid {1\le j\le n}\}$ let $H(g)\subset H$ denote the closed span of
     the family $\{\d_t\} $ for the (reduced) words $t$ with last letter equal to $g$.
    Since $ \rho(g) (1-P_{H(g)}) = P_{H(g^{-1})} \rho(g)$, one has a decomposition (cf. \cite[p. 896]{HP2})
      $$\rho(g)= \rho(g) P_{H(g)}  + P_{H(g^{-1})}\rho(g). $$
       For a family $(a_j)$ in $B(H)$,  we use the customary notation
     $$|(a_j)|_C=\|\sum a^*_ja_j\|^{1/2} \text{  and  } |(a_j)|_R=\|\sum a_ja^*_j\|^{1/2}.$$
      Let
     $$a_j=\rho(g_j) P_{H(g_j)}  \text{  and  } b_j=P_{H(g_j^{-1})}\rho(g_j) $$
     so that $\rho(g_j)=a_j+b_j$.
 Note that $|(a_j)|_C \le 1$ and $|(b_j)|_R \le 1$. Similarly by \cite{HP2} the same decomposition holds for  $\lambda$ as follows:
  \begin{equation}  \label{e17}
      \lambda(g_j)=\a_j +\b_j \text{  with  }
     |(\a_j)|_C \le 1\text{ and }  |(\b_j)|_R \le 1. \end{equation}
        
      \end{rem}
      
         \begin{rem} It is easy to check that
         \begin{equation}  \label{e41} C(E_n,B(H))\ge \sqrt {n}/2 .\end{equation}
         Indeed this boils down to checking that the ``constant of injectivity"
         of $E_n$ is at least $\sqrt {n}/2 $.
          It suffices to  check that if
         $Q: B(H) \to E_n$ is a linear projection, then $\|Q\|_{cb} \ge \sqrt {n}/2 $. 
         We have a decomposition 
         $\lambda(g_j)=\a_j+\b_j$ in $B(H)$  
         whence a decomposition 
         $\lambda(g_j)=Q\a_j+Q\b_j$ in $M_\lambda$.
         Therefore
         $\sqrt n=(\sum\| \lambda(g_j)\|_2)^{1/2}
         \le (\sum\| Q\a_j\|_2)^{1/2}
         +(\sum\| Q\b_j\|_2)^{1/2}
         \le |(Q\a_j)|_C +|(Q\b_j)|_R   
         \le \|Q\|_{cb}  |(\a_j)|_C +\|Q\|_{cb}|(\b_j)|_R\le
         2\|Q\|_{cb} $.\\
         The same argument shows that   $C(E_n,B)\ge \sqrt {n}/2 $ for any $B$ with WEP,
         since in that case the inclusion $B\to B^{**}$ factors through $B(H)$ and $w$ extends to $w^{**}$ on $B^{**}$. \\Note  that since $d_{cb}(E_n,F_n)\le 2$ (see Remark \ref{R3}) we could also derive from Theorem \ref{hp2}
         that $C(E_n,B(H))\ge (1+\sqrt {n})/4$.
          \end{rem}  
          
         \begin{thm}
         \label{T10} Let $c>0$  and $n\ge 1$. Let $G=\F_n$, $H=\ell_2(G)$ and
         let     $A=C^*_\lambda(G)\subset B(H)$.
Let $M$ be a von Neumann algebra such that
         $$\|\d\|_{mb} \le c \|\d\|_{cb} $$
         for any derivation $\d: A \otimes 1_M \to B(H)\bar\otimes M$.
         Then
         \begin{equation}  \label{e32} \sqrt n/2 \le 4 c C(E_n,M).\end{equation}
         Moreover, the same property holds for  $G=\F_\infty$.
        \end{thm}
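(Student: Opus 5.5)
The strategy is to build, from an arbitrary factorization $E_n {\buildrel w\over\rightarrow} M {\buildrel v\over\rightarrow} E_n$ of $Id_{E_n}$, an inner derivation $\d=\d^T$ on $A\otimes 1_M$ with values in $B(H)\bar\otimes M$. The target estimates are
$$\|\d\|_{cb}\lesssim \|w\|_{cb} \quad\text{and}\quad \|\d\|_{mb}\gtrsim \sqrt n/\|v\|_{cb}.$$
With the constants arranged as $\|\d\|_{cb}\le 4\|w\|_{cb}$ and $\|\d\|_{mb}\ge \sqrt n/(2\|v\|_{cb})$, the hypothesis $\|\d\|_{mb}\le c\|\d\|_{cb}$ gives $\sqrt n/2\le 4c\|w\|_{cb}\|v\|_{cb}$. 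Taking the infimum over factorizations yields \eqref{e32}.

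\emph{The two norms as distances.} Put $M\subset B(K)$. By Theorem \ref{t5.6}, applied to the von Neumann algebra $B(H)\bar\otimes M$ and to the inclusion $\pi$ of $A\otimes 1$, we have
$$\|\d^T\|_{mb}=2\,d\big(T,\ (A\otimes1)'\cap (B(H)\bar\otimes M)\big)=2\,d\big(T,\ \rho(G)''\bar\otimes M\big).$$
By Christensen's formula, the cb-norm is instead $2\,d\big(T,\ \rho(G)''\bar\otimes B(K)\big)$. So everything reduces to finding $T\in B(H)\bar\otimes M$ that is close to $\rho''\bar\otimes B(K)$ but far from $\rho''\bar\otimes M$.

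\emph{Choice of $T$.} Set $y_j=w(\lambda(g_j))\in M$ and use the decomposition $\rho(g_j)=a_j+b_j$ with $|(a_j)|_C\le1$ and $|(b_j)|_R\le1$. Take
$$T=\sum_j a_j\otimes y_j \in B(H)\bar\otimes M .$$
\begin{itemize}
\item[(a)] \emph{Upper bound on the cb-norm.} By injectivity of $B(\cdot)$, extend $w$ to a complete contraction times $\|w\|_{cb}$, $\tilde w:B(L)\to B(K)$, where $B(L)\supset M_\lambda$. Then \eqref{e17} splits $y_j=\tilde w(\a_j)+\tilde w(\b_j)$, with column and row norms at most $\|w\|_{cb}$. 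Subtracting from $T$ the element $\sum_j\rho(g_j)\otimes \tilde w(\b_j)\in\rho''\bar\otimes B(K)$ leaves a sum of terms $a_j\otimes\tilde w(\a_j)$ and $-b_j\otimes\tilde w(\b_j)$. These pair column with column and row with row, so each is bounded by Cauchy--Schwarz ($\|\sum x_j\otimes z_j\|\le|(x_j)|_C|(z_j)|_C$, and similarly for rows). This gives $\|\d\|_{cb}\le 4\|w\|_{cb}$, up to bookkeeping of the factor $2$ in Christensen's formula.
\item[(b)] \emph{Lower bound on the mb-norm.} Let $T'\in\rho''\bar\otimes M$. Apply $Id_{B(H)}\otimes\tilde v$, where $\tilde v:M\to M_\lambda$ is a normal-enough extension of $v$. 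If normality fails, pass to the extension of Theorem \ref{d1} or Corollary \ref{ptruc} and use biduals. This sends $T-T'$ to
$$\sum_j a_j\otimes\lambda(g_j)-\Theta,\qquad \Theta\in\rho''\bar\otimes M_\lambda,$$
with norm at most $\|v\|_{cb}\|T-T'\|$. Then compress with the vector state $\langle\d_e,\cdot\,\d_e\rangle$, or rather the trace on $\rho''$, combined with the functionals $\tau(\lambda(g_j)^*\,\cdot)$. On $\rho''\bar\otimes M_\lambda$ the $\rho$-part is killed by an averaging/orthogonality argument, while $\sum_j a_j\otimes\lambda(g_j)$ contributes $\sum_j\langle a_j\d_e,\ \cdot\ \rangle$-type terms of total size $\gtrsim\sqrt n$. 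This is the same $\ell_2$-versus-row/column mechanism as in the proof of \eqref{e41}, and it should give $\|T-T'\|\ge\sqrt n/(4\|v\|_{cb})$.
\end{itemize}

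\emph{Main obstacle.} Step (b) is the hardest: one must exhibit a concrete functional, normal on $B(H)\bar\otimes M_\lambda$ and vanishing on (or uniformly small on) $\rho(G)''\bar\otimes M_\lambda$, that detects $\sum_j a_j\otimes\lambda(g_j)$ at size $\sqrt n$. If a direct witness is awkward, I would instead argue by contradiction, as in \eqref{e41}. A small $T'$ would produce, after applying $Id\otimes\tilde v$ and conditioning onto $\rho''$ (via the $\rho$-trace conditional expectation onto the copy of $E_n$), a projection-like map onto $E_n$ of cb-norm $\lesssim\|v\|_{cb}\,\|\d\|_{mb}$. Comparing with $C(E_n,B(H))\ge\sqrt n/2$ would then give the bound.

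\emph{The case $G=\F_\infty$.} This follows by applying the same construction to the first $n$ generators, since all estimates above involve only $g_1,\dots,g_n$.
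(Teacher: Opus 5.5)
Your derivation is the right one: $\delta^T$ with $T=\sum_j a_j\otimes w(\lambda(g_j))$ is exactly the paper's $\delta=[Id\otimes w]\circ D$. However, step (a) rests on a false inequality, and step (b), which carries the whole content of the theorem, is not proved.

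\textbf{Step (a).} The inequality $\|\sum_j x_j\otimes z_j\|\le |(x_j)|_C\,|(z_j)|_C$ fails: for $x_j=z_j=e_{1j}\in M_n$ the left side is $\sqrt n$ and the right side is $1$. The operator Cauchy--Schwarz inequality only pairs a column-bounded family with a row-bounded one. So you must subtract $\sum_j\rho(g_j)\otimes\tilde w(\alpha_j)$, not $\sum_j\rho(g_j)\otimes\tilde w(\beta_j)$. The remainder is then $\sum_j a_j\otimes\tilde w(\beta_j)-\sum_j b_j\otimes\tilde w(\alpha_j)$, which is the paper's $\Sigma$ pushed through $\tilde w$, and $\|\delta\|_{cb}\le 4\|w\|_{cb}$ does follow.

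\textbf{Step (b).} You give no working witness. The functionals you name are built from $\omega_{\delta_e}$ (the trace on $\rho(G)''$) and $\tau(\lambda(g_j)^*\,\cdot\,)$. They vanish on $\Delta=\sum_j a_j\otimes\lambda(g_j)$, since $a_j\delta_e=0$, and they equal $1$ on $1\otimes\lambda(g_j)\in\rho(G)''\bar\otimes M_\lambda$. The ``averaging'' and the contradiction fallback are never specified.

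The paper avoids estimating this distance at all. Since $\delta$ is m.b., Theorem~\ref{t5.6} gives $T''\in B(H)\bar\otimes M$ with $\delta=\delta^{T''}$ and $2\|T''\|=\|\delta\|_{mb}$. Hence $b\mapsto[b\otimes 1_M,T'']$ extends $\delta$ to all of $B(H)\otimes 1_M$ with cb-norm at most $\|\delta\|_{mb}$ (Corollary~\ref{ptruc}). The computation you never use is that $[\lambda(g_i),a_j]$ vanishes for $i\neq j$ and equals $P_e$ up to sign for $i=j$. Thus $\delta(\lambda(g_j)\otimes 1_M)=\pm P_e\otimes w(\lambda(g_j))$. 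Slicing by a normal state $f$ with $f(P_e)=1$ and then applying $v$ gives $v':B(H)\to E_n$ that is $\pm Id$ on $E_n$, with $\|v'\|_{cb}\le\|v\|_{cb}\|\delta\|_{mb}\le 4c\|v\|_{cb}\|w\|_{cb}$. Then \eqref{e41} concludes.

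Your own distance route can also be closed, but only with an explicit witness. Define $Id\otimes v$ entrywise on $B(\ell_2(G))\bar\otimes M$; no normality is needed. This map has norm at most $\|v\|_{cb}$ and sends $T$ to $\Delta$. It also maps $\rho(G)''\bar\otimes M$ into $\rho(G)''\otimes E_n$, because it preserves commutation with the permutation matrices $\lambda(g)\otimes 1$. Now take $\psi(X)=\langle \eta, X\xi\rangle-\langle \eta', X\xi'\rangle$ with $\xi=\sum_j\delta_{g_j}\otimes\delta_{g_j^{-1}}$, $\eta=\xi'=\delta_e\otimes\delta_e$ and $\eta'=\sum_j\delta_{g_j^{-1}}\otimes\delta_{g_j}$. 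One checks that $\psi(\rho(s)\otimes\lambda(t))=0$ for all $s,t$ and $\|\psi\|\le 2\sqrt n$. Using $a_j\delta_{g_j}=\delta_e$ and $a_j\delta_e=0$, one also gets $\psi(\Delta)=n$. This yields $\|\delta\|_{mb}\ge\sqrt n/\|v\|_{cb}$.
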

        
         \begin{proof} We first claim that there is a derivation
         $D: A \otimes 1 \to B(H)\bar\otimes M_\lambda$ (here for simplicity $1$ denotes the unit
         in $M_\lambda$)
         with $\|D\|_{cb}\le 4$  and such that
         $D(\lambda(g_j) ) = P_e \otimes \lambda(g_j)$
         where $P_e\in B(H)$ is a element of unit norm (namely $P_e$ is the orthogonal projection onto $\CC\d_e$) and $g_j$ are the free generators of $G$.
         
         We set for any $x\in A$
         $$D(x\otimes 1)=[x\otimes 1, \Sigma]$$
         with
          $$\Sigma= \sum  - b_j \otimes \a_j +a_j \otimes \b_j .$$
         By a well known operator version of the Cauchy-Schwarz inequality (see e.g.  \cite[Lemma 2.4]{Ha1132})         we have
         $\|\sum b_j \otimes \a_j\|\le \|(b_j)\|_R \|(\a_j)\|_C\le 1$ and
         $\|\sum a_j \otimes \b_j\|\le \|(a_j)\|_C\|(\b_j)\|_R \le 1$ and hence $\|\Sigma\|\le 2$.
         Therefore $\|D\|_{cb} \le 4$.
         Since $\rho$ and $\lambda$ commute, $\rho(g_j)=a_j+b_j$    commutes with $x\in M_\lambda$ which means $[x,b_j]=-[x,a_j]$ and hence
         \begin{equation}  \label{e30} D(x\otimes 1) =[x\otimes 1, \Delta]\end{equation}
        where
       \begin{equation}  \label{e31} \Delta=   \sum\nl_1^n a_j \otimes \lambda(g_j)\in B(H) \otimes E_n  .\end{equation} 
  In particular it is important to observe that for all $x\in A$
  \begin{equation}  \label{e19}
  D(x\otimes 1) \in B(H) \otimes E_n  .
  \end{equation}        
    Let $a_g=\rho(g) P_{H(g)}$. 
     It is easy to check 
  that $[x, a_g]=0$ when $x=\lambda(g')$ with $g'\not=g$
     and $=P_e$  when $x=\lambda(g)$ so that
     $$D(\lambda(g_j)) = P_e \otimes \lambda(g_j) .$$
     This proves the claim.\\
     Consider a factorization of $Id_{E_n}$ through $M$ of the form
  $  E_n {\buildrel {w}\over \rightarrow} M {\buildrel {v}\over \rightarrow}  E_n $.
  
  Let $\d: A\otimes 1_M \to B(H)\bar\otimes M $ be the derivation defined by
  \begin{equation}  \label{e29}
  \d(x\otimes 1_M)=[ Id_{B(H)}\otimes w] D(x\otimes 1) =[x\otimes 1_M, [Id_{B(H)}\otimes w](\Delta) ].\end{equation}
  By our assumption
   $\|\d\|_{mb} \le c\|\d\|_{cb}\le c\|w\|_{cb} \|D\|_{cb}\le 4c\|w\|_{cb}.$ 
  
  By Corollary \ref{ptruc} our derivation
  $\d$ admits an extension $\hat \d: B(H) \otimes 1_M \to B(H)\bar \otimes M$ with
 $$\|\hat \d\|_{cb} \le \|\d\|_{mb} \le 4c \|w\|_{cb}.$$
 Then    $\hat\d(\lambda(g_j) \otimes 1_M)=\d(\lambda(g_j) \otimes 1_M)=P_e \otimes w(   \lambda(g_j))$, and hence $\hat\d(x \otimes 1_M) =P_e \otimes w(  x)$ for all $x\in E_n$.
 Let $f\in B(H)^*$ be of unit norm such that $f(P_e)=1$.
  We may view the map $f\otimes v$
  as one from ${B(H) \bar\otimes M }$ to ${E_n }$ with c.b. norm $\le \|v\|_{cb}$
  since obviously $\|f: B(H) \to \CC\|_{cb}=1$.
 Let $w': E_n \to B(H)$  and  $v': B(H) \to E_n$ be defined by $w'(x)= x\otimes 1_M$
 and let $v'(b) = [f  \otimes   v    ]\hat \d(b\otimes 1) $. 
 Then $v'w'(x) =vw(x)$ for all $x\in E_n$. 
 Thus $v'w' =Id_{E_n}$ is a factorization through $B(H)$ and hence if $c=\|  \d\|_{mb}  \|\d\|^{-1}_{cb}$
 $$C(E_n,B(H))\le \|v'\|_{cb} \|w'\|_{cb} \le \|v\|_{cb} \|\hat \d\|_{cb} 
  \le \|v\|_{cb} \|  \d\|_{mb} =  c \|v\|_{cb}  \|\d\|_{cb}  \le 4c \|v\|_{cb}\|w\|_{cb}.$$
 Taking the infimum over $v,w$ we obtain
 $$C(E_n,B(H))\le 4c C(E_n,M),$$
   and we conclude by \eqref{e41}.

 Lastly, assume now that $G=\F_\infty$.  
  Let $(g_j)$ denote the free generators of $\F_\infty$. 
 Then \eqref{e29}, \eqref{e30} and \eqref{e31}  can still be used to define the desired derivation, using for
 $(g_1,\cdots,g_n)$ the first free generators of $\F_\infty$.
 \end{proof}

      \begin{cor}\label{exa} 
      Let $G=\F_\infty$, $A=C^*_\lambda(G)$ and  $H=\ell_2(G)$.
      Assume that
      all c.b. derivations from $A\otimes 1_M$ to $B(H)\bar \otimes M $ are m.b.
      Then
       there is $c'>0$ such that $C(E_n,M)\ge c' \sqrt n$ for all $n\ge 1$.
  \end{cor}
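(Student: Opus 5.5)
The plan is to deduce the corollary from Theorem \ref{T10}, applied with $G=\F_\infty$. The only missing ingredient is a \emph{uniform} constant $c$ with $\|\d\|_{mb}\le c\|\d\|_{cb}$ for all derivations $\d: A\otimes 1_M\to B(H)\bar\otimes M$. Once such a $c$ is available, \eqref{e32} gives
$$C(E_n,M)\ge \frac{\sqrt n}{8c}\quad\text{for all } n\ge 1,$$
which is the claim with $c'=1/(8c)$.

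To obtain the uniform constant from the qualitative hypothesis, I would use a closed graph argument.

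1. Let $\mathcal D_{cb}$ be the space of c.b. derivations from $A\otimes 1_M$ into $N=B(H)\bar\otimes M$, relative to the inclusion. With the cb-norm this is a Banach space, since derivations are closed under pointwise limits and the cb-norm is complete.

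2. Let $\mathcal D_{mb}$ be the space of m.b. derivations with the mb-norm. It is also a Banach space: $MB(A\otimes 1_M, N)$ is complete, because it is the space of bounded maps $Id_\C\otimes u$ between the completed max tensor products, and the derivation identity survives limits.

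3. By hypothesis $\mathcal D_{cb}\subset\mathcal D_{mb}$, and since $\|\cdot\|_{cb}\le\|\cdot\|_{mb}$ we always have $\mathcal D_{mb}\subset\mathcal D_{cb}$. So the two sets coincide. The identity map from $(\mathcal D_{cb},\|\cdot\|_{cb})$ to $(\mathcal D_{mb},\|\cdot\|_{mb})$ has a closed graph, because convergence in either norm implies pointwise norm convergence.

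4. By the closed graph theorem this identity map is bounded, so $\|\d\|_{mb}\le c\|\d\|_{cb}$ for some $c$.

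For a more concrete route than step 2, one can use the description given by Theorem \ref{t5.6}, where $\|\d\|_{mb}=d(\t,\pi(A)')$ with $\t\in N$ and commutant taken in $B(H\otimes K)$. Alternatively, one can argue by contradiction, avoiding completeness. Suppose that for each $k$ there is a derivation $\d_k$ with $\|\d_k\|_{cb}\le 2^{-k}$ and $\|\d_k\|_{mb}\ge k$. Form the absolutely convergent sum
$$\d=\sum_k \lambda_k\,\d_k .$$
Since $\|\d_k\|_{cb}\le 2^{-k}$, this is a c.b. derivation. By hypothesis $\d$ is then m.b. One then has to contradict this, but that cannot be done by subtracting terms directly. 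The closed graph route avoids the issue, so I would present that one.

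The main point requiring care is step 2: confirming that m.b. derivations form a complete space and that the graph is closed. It is routine, but the whole argument hinges on it. The remainder of the proof is Theorem \ref{T10}, which the paper states holds for $G=\F_\infty$.
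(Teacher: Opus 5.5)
Your proposal is correct and is the paper's own argument: the closed graph theorem (m.b. and c.b. derivations being closed subspaces of the Banach spaces $MB$ and $CB$, with $\|\cdot\|_{cb}\le\|\cdot\|_{mb}$) upgrades the hypothesis to a uniform bound $\|\delta\|_{mb}\le c\|\delta\|_{cb}$, and then \eqref{e32} for $G=\F_\infty$ yields $C(E_n,M)\ge \sqrt n/(8c)$. Your asides are not needed, and the one about Theorem \ref{t5.6} is slightly off: there $\|\delta\|_{mb}=2\,d(\theta,\pi(A)'\cap N)$, the distance to the relative commutant in $N$, whereas by Christensen's formula the distance to the full commutant $\pi(A)'$ equals $\|\delta\|_{cb}/2$, not the mb-norm.
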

      
    \begin{proof}
   By the closed graph theorem, if the (closed) subspaces 
   of derivations in $MB(A,B(H)\bar \otimes M)$ and
   $CB(A,B(H)\bar \otimes M)$ coincide, they must have equivalent norms, and then
    the corollary follows from \eqref{e32}  by the last assertion of  Theorem \ref{T10}.
   \end{proof}
   
   By \eqref{e22}  this implies:
  \begin{cor}\label{exa1} 
      Let $G=\F_\infty$, $A=C^*_\lambda(G)$ and  $M_\lambda=C^*_\lambda(G)''$.          There is a c.b. derivation  from $ A\otimes 1 $ to $ B(H)\bar \otimes M_\lambda$ that is not m.b.
      \end{cor}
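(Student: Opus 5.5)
We argue by contradiction, combining Corollary \ref{exa} with the upper bound \eqref{e22}. Take $G=\F_\infty$, $A=C^*_\lambda(G)$, $H=\ell_2(G)$ and $M=M_\lambda$, and suppose that every c.b. derivation from $A\otimes 1$ to $B(H)\bar\otimes M_\lambda$ is m.b.

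Corollary \ref{exa}, applied with $M=M_\lambda$, then yields a constant $c'>0$ with $C(E_n,M_\lambda)\ge c'\sqrt n$ for all $n\ge1$. Behind this corollary are two ingredients. The first is the closed graph theorem: derivations form a closed subspace in both $CB(A,B(H)\bar\otimes M_\lambda)$ and $MB(A,B(H)\bar\otimes M_\lambda)$, and the mb-norm dominates the cb-norm. Hence, if the two spaces coincide, the norms are equivalent, $\|\d\|_{mb}\le c\|\d\|_{cb}$. The second is Theorem \ref{T10}, in its version for $\F_\infty$, which turns this norm inequality into $\sqrt n/2\le 4c\,C(E_n,M_\lambda)$.

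On the other hand, Haagerup's result gives a projection $P:M_\lambda\to E_n$ with $\|P\|_{cb}\le2$. Factoring $Id_{E_n}$ as the inclusion $E_n\subset M_\lambda$ followed by $P$ shows $C(E_n,M_\lambda)\le 2$ for every $n$; this is \eqref{e22}. Combining the two bounds gives $c'\sqrt n\le 2$ for all $n$, which is absurd once $n>4/c'^2$. So some c.b. derivation $A\otimes1\to B(H)\bar\otimes M_\lambda$ fails to be m.b.

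The argument can also be made explicit with a fixed finite $n$. The derivation $\d$ of \eqref{e29}, with $w$ the inclusion $E_n\subset M_\lambda$, has $\|\d\|_{cb}\le4$, and its mb-norm ratio is at least $\sqrt n/(16\cdot 2)$. A single non-m.b. derivation can then be obtained by summing suitable multiples over disjoint blocks of generators, though the contradiction argument above avoids this.

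The only delicate point is that Corollary \ref{exa} must be applicable. This requires checking that derivations into $B(H)\bar\otimes M_\lambda$ form closed subspaces in both the cb- and mb-normed spaces, so that the closed graph theorem applies. That holds because the derivation identity is preserved under pointwise norm limits.
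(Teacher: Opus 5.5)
Your proposal is correct and follows exactly the paper's route. Corollary \ref{exa} with $M=M_\lambda$ would give $C(E_n,M_\lambda)\ge c'\sqrt n$ for all $n$, which contradicts the uniform bound $C(E_n,M_\lambda)\le 2$ of \eqref{e22} coming from Haagerup's projection. The paper's proof is just this one-line deduction, and your side remarks (the closed-graph check, the explicit block construction) are valid but not needed.
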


 \begin{rem}By Remark \ref{R2}, if $Q(x\otimes 1)=q(x)\otimes 1$, we have
    $\| D Q\|_{mb}=\|D\|_{mb}$ for any map $D$ defined on $A\otimes 1$.
      Therefore we also obtain
       a similar example of a derivation from
      $\C\otimes 1\simeq\C$.\end{rem}

 Let $\pi: A \to M$ be a $*$-homomorphism and
  $\d: A \to M$ be a $\pi$-derivation (meaning $\d(xy)=\pi(x) \d(y)+\d(x)\pi(y)$ for all $x,y\in A$). 
 Then
      \begin{equation}  \label{e18}
      A \ni a \mapsto  u(a)=\left( \begin{matrix} \pi(a)  & \ \d(a)\\ 0& \  \pi(a)   \end{matrix}  \right)\end{equation}  
      is a homomorphism from $A$ to $M_2(M)$.

\begin{cor} Let $N=M_2(B(H)\bar \otimes M_\lambda)$.
There is a c.b. unital homomorphism $u: \C \to N$ that does not admit any c.b. lifting $\hat u: \C \to W$ when $W$ is any WEP
$C^*$-algebra of which  $N$ is a quotient.
\end{cor}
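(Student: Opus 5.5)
We take the c.b. derivation $\d: \C\otimes 1\simeq \C \to B(H)\bar\otimes M_\lambda$ that is not m.b., given by Corollary \ref{exa1} and the subsequent remark (composing with a quotient map $q:\C\to C^*_\lambda(\F_\infty)$). Here $\pi$ is the $*$-homomorphism $x\mapsto q(x)\otimes 1$. Using \eqref{e18} we set
$$u(a)=\left(\begin{matrix}\pi(a) & \d(a)\\ 0 & \pi(a)\end{matrix}\right)\in N=M_2(B(H)\bar\otimes M_\lambda).$$
This is a unital homomorphism, and it is c.b. because $\pi$ and $\d$ are. We then argue by contradiction: suppose $N=W/\I$ with $W$ WEP, and that $u$ admits a c.b. lifting $\hat u:\C\to W$, so $q_W\hat u=u$ where $q_W:W\to N$ is the quotient map.

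The key step is to show that such a lifting forces $u$ to be m.b. First, since $W$ has the WEP, Theorem \ref{wep} (i)$\Rightarrow$(ii) gives $\hat u\in MB(\C,W)$ with $\|\hat u\|_{mb}=\|\hat u\|_{cb}<\infty$. Second, $q_W$ is a $*$-homomorphism defined on the whole $C^*$-algebra $W$, so $\|q_W\|_{mb}=1$ by Proposition \ref{L1}. Then \eqref{e21} gives $\|u\|_{mb}\le\|\hat u\|_{cb}<\infty$. Note that $\hat u$ need not be a homomorphism; we only use its linearity, which is enough.

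Next we pass from $u$ to $\d$. The corner map $N\ni y\mapsto e_{11}ye_{22}\in B(H)\bar\otimes M_\lambda$ is a two-sided multiplication by elements of $N$ followed by the identification of the $(1,2)$ corner with $B(H)\bar\otimes M_\lambda$. Multiplications by elements of norm $\le1$ have mb-norm $\le 1$, as used in the proof of Theorem \ref{s1}. For the identification, we realize it as $y\mapsto (e_{11}\otimes 1)\,y\,(e_{21}\otimes 1)$, a composition of multiplications landing in the $(1,1)$ corner, followed by the identification of the $(1,1)$ corner with $B(H)\bar\otimes M_\lambda$. This last identification is the restriction of a conditional expectation, i.e. a c.p. map on the $C^*$-algebra $N$, hence m.b. by Proposition \ref{L1}. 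Therefore $\|\d\|_{mb}\le\|u\|_{mb}<\infty$, which contradicts the choice of $\d$.

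The main obstacle is checking that $\|\d\|_{mb}<\infty$ with respect to the target $B(H)\bar\otimes M_\lambda$ really contradicts the non-m.b. property established in Corollary \ref{exa1}. Since mb-norms depend on the ambient algebras, we have to check that the domain $\C$ (equivalently $\C\otimes 1$, using Remark \ref{R2}) and the range algebra are the same in both places. Every map used above is defined on a full $C^*$-algebra, so no max-isometry issue arises, but this bookkeeping is the step we will verify most carefully.
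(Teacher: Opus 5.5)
Your proposal is correct and follows essentially the paper's argument: the WEP of $W$ gives $\|\hat u\|_{mb}\le\|\hat u\|_{cb}$, composing with the quotient $*$-homomorphism $q_W$ bounds $\|u\|_{mb}$, the $(1,2)$-corner map gives $\|\d\|_{mb}\le\|u\|_{mb}$, and Remark \ref{R2} transfers the non-m.b. derivation of Corollary \ref{exa1} to the domain $\C$. Your explicit justification of $\|\d\|_{mb}\le\|u\|_{mb}$ (multiplication by matrix units followed by a c.p. compression defined on all of $N$) just fills in the step the paper calls clear.
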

\begin{proof} Since $W$ is WEP we have
$\|\hat u\|_{mb} =\|\hat u\|_{cb}$ for any   $\hat u\in CB( \C , W)$.
Let $q: W \to N$ be a quotient map so that $u=q\hat u$.
Then $\|u\|_{mb} \le \|q\|_{mb} \|\hat u\|_{mb} \le \|\hat u\|_{cb}$.
When $u$ is associated to a derivation $\d: \C \to B(H)\bar \otimes M$ as in \eqref{e18}, 
we have clearly $\|\d\|_{mb} \le \|u\|_{mb}$, so
this gives us $\|\d\|_{mb} \le \|\hat u\|_{cb}$.
Taking Remark \ref{R2} into account, we can select a $u$ associated to a  c.b. $\d$
with $\|\d\|_{mb}=\infty$ and 
 the corollary follows.
\end{proof}

           \begin{rem} Actually the preceding argument shows that
           in the case $n=\infty$ there is no $c<\infty$ for which the homomorphism $u$
           is   locally $c$-liftable  in the sense of \cite[Def. 7.46]{154}.
           \end{rem}     
 
 \begin{rem}[Free gaussian analogue]
   Let $\cl F$ denote the full Fock space based
         on a Hilbert space with orthonormal basis $(e_j)$ ($1\le j\le n$).
         Consider in $B(\cl F)$ the left (resp. right) creation operator $\ell_j$ (resp. $r_j$)
         taking $x\in \cl F$ to $e_j \otimes x$ (resp. $  x\otimes e_j$). Let $\Omega$ denote the vacuum vector. Let $s_j=\ell_j+\ell_j^*$ and $t_j=r_j + r_j^*$.
         In Voiculescu's free probability theory \cite{VDN}, the families $(s_j)$ and $(t_j)$ are mutually commuting and each of them is a free gaussian (or semicircular) family with respect to the state
         $f_\Omega: x\mapsto \langle \Omega, x\Omega \rangle$.
         Let $\cl A$ denote the $C^*$-algebra generated by $(s_j)$ in $B(\cl F)$ and let $\cl M$
         denote the von Neumann algebra generated by $\cl A$. Note that $\cl M' =\cl A'$ is generated by $(t_j)$.
         When restricted to $\cl M$ or to $\cl M'  $, the state  $f_\Omega$ defines a normalized trace
         that we denote by $\tau$.
         Let $\cl E_n$ denote the linear span of $(s_j)_{1\le j\le n}$ viewed as an operator subspace of
         $\cl M$.
         It is  known that the orthogonal projection from $L_2(\tau)$ onto $\cl E_n$
         defines a projection $Q: \cl M\to \cl E_n$ with $\|Q\|_{cb} \le 2$.
         Let $P=Q_{|\cl A}:\cl A \to  \cl M  $.
          In complete analogy with
         Theorem \ref{T10} we can produce a derivation
         \def\D{{\cl D}}
         $\cl D: \cl A \otimes 1 \to B(\cl F) \bar \otimes \cl M$
          such that $x\mapsto [f_\Omega \otimes Q] \D (x \otimes 1)$ is
         equal to $P(x)$ for all $x\in \cl A$. 
         One simply sets
         $$\D(a\otimes 1)=\sum [a,r_j] \otimes s_j .$$
          As for Theorem \ref{T10} we can check that $\|\cl D\|_{cb} \le 4$ and $\sqrt n \le 2\|\D\|_{mb}$.
\end{rem}

                \begin{rem}\label{R3}
By Haagerup's by now classical results in \cite{Ha}
   the map $w: F_{n} \to E_n$ defined by 
   $w([e_{1j}\oplus e_{j1}])=\lambda(g_j)$ has $\|w\|_{cb} \le 2$ and the inverse map
   $w^{-1}$ admits an extension $v: M_\lambda \to  F_{n}$ with
   $\|v\|_{cb}\le 1$. In particular $d_{cb}(E_n,F_n)\le 2$, and also $d_{cb}(\cl E_n,F_n)\le 2$.  \\
   The proof that $\|w\|_{cb} \le 2$ can be derived easily from the decomposition \eqref{e17} : setting 
   $ w=w_1+w_2$ with $w_1([e_{1j}\oplus e_{j1}]) = a_j$ and $w_2([e_{1j}\oplus e_{j1}]) =b_j $
   we obtain c.c. maps $w_1,w_2$ from $F_n$ to $B(H)$ such that
   $ w=w_1+w_2$ on $F_n$. 
   As for $v: M_\lambda \to  F_{n}$, we use Remark \ref{homo}:
   denoting by $k:  M_\lambda \to L_2(M_\lambda, \tau)_{rc}$
   the inclusion in Lemma \ref{rcoh1} the o.s. $F_n$ can be identified completely isometrically with the operator subspace
   $F=k(E_n)\subset L_2(M_\lambda, \tau)_{rc}$, and the
   orthogonal projection $P_F$ on $L_2(M_\lambda, \tau)_{rc}$   has $\|P_F\|_{cb}\le 1$.
   Thus we can take for 
   $v$ the inclusion $k: M_\lambda \to L_2(M_\lambda, \tau)_{rc}$ 
   followed by   $P_F$.   
      In particular this implies
  \begin{equation}  \label{e27} C(F_{n}, M_\lambda)\le 2.\end{equation}

   \end{rem}
        
           \section{Similar versus strongly similar for homomorphisms into $N$}\label{svss}
 
         Nuclear $C^*$-algebras are characterized in \cite{toho} 
          by the property that any c.b. homomorphism
          with domain   the algebra (and range any von Neumann algebra) is strongly similar
          to a $*$-homomorphism.
         It is natural to wonder whether there is an analogous characterization
         of the von Neumann algebras $N$ such that any c.b. homomorphism
          with range  $N \subset B(H)$ (and domain any  $C^*$-algebra) is strongly similar
          (i.e. similar via an invertible in $N$)
          to a $*$-homomorphism. We will show 
          in Theorem \ref{cfin}
          that this holds for $N$ if and only if $N$ is the direct sum of a {\it finite} von Neumann algebra
          and an injective one.

          We start by a simple observation.
          
          \begin{pro}\label{pinj} Let  $N$ be an   injective von Neumann algebra.   
          \\ For any unital $C^*$-algebra $A$
          any unital c.b. homomorphism $u$  from $A$  to $N$  
          is strongly similar
          to a $*$-homomorphism, and  we have $\|u\|_{mb}=\|u\|_{cb}$.\\
           Moreover, any c.b. derivation $\d: A\to M$ relative to a $*$-homomorphism $\pi:A\to M$
          is inner and $\|\d\|_{mb}=\|\d\|_{cb}$.
          \end{pro}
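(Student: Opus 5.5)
The approach is to prove that for maps into an injective von Neumann algebra $N$ the mb-norm coincides with the cb-norm, and then deduce both assertions from Theorem \ref{s1} and Theorem \ref{t5.6}. Let $u: A\to N$ be c.b. Since $N$ is injective, Remark \ref{kir} (Kirchberg) gives $\C\otimes_{\min} N=\C\otimes_{\max} N$ isometrically. Hence for $t\in\C\otimes_{alg}A$ we can write
$$\|(Id_\C\otimes u)(t)\|_{\C\otimes_{\max}N}=\|(Id_\C\otimes u)(t)\|_{\C\otimes_{\min}N}\le \|u\|_{cb}\|t\|_{\C\otimes_{\min}A}\le\|u\|_{cb}\|t\|_{\C\otimes_{\max}A},$$
where the first inequality is \eqref{e7}. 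This gives $\|u\|_{mb}\le\|u\|_{cb}$. The reverse inequality $\|u\|_{cb}\le\|u\|_{mb}$ holds in general, because $\|\cdot\|_{\min}\le\|\cdot\|_{\max}$ and the mb-norm controls $Id_D\otimes u$ for every $D$. To get it cleanly I would use \eqref{e100} with $D=M_n$, where min and max agree. So $\|u\|_{mb}=\|u\|_{cb}$, and this holds for any linear map, not only homomorphisms. An equivalent route, if one prefers Theorem \ref{wep}, is to note that injective algebras have the WEP and read off $CB(A,N)=MB(A,N)$ isometrically from (i)$\Rightarrow$(ii) there.

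For a unital c.b. homomorphism $u:A\to N$ we then have $\|u\|_{mb}=\|u\|_{cb}<\infty$. Theorem \ref{s1} applies directly and says $u$ is strongly similar to a $*$-homomorphism, with an invertible $S\in N$ satisfying $\|S\|\|S^{-1}\|=\|u\|_{mb}=\|u\|_{cb}$.

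For derivations I read $M$ in the statement as the injective algebra $N$. Let $\d:A\to N$ be a c.b. $\pi$-derivation with $\pi:A\to N$ a unital $*$-homomorphism. The first step gives $\|\d\|_{mb}=\|\d\|_{cb}<\infty$. Theorem \ref{t5.6} then yields $\theta\in N$ with $\d=\d^\theta$, so $\d$ is inner, indeed with implementing element in $N$, together with the distance formula. If $A$ is not unital, I would first pass to the unitization. This is harmless because the inclusion $A\subset\hat A$ is max-isometric (Remark \ref{R1}) and $N$ is injective, so extensions with the same cb-norm exist.

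The main point to check carefully is the first step, namely that min equals max on $\C\otimes N$ for injective $N$. This is exactly Kirchberg's theorem as recalled in Remark \ref{kir}. Everything else is a direct application of results already established in the paper.
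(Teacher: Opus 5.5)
Your proof is correct. For the homomorphism part it is the paper's proof. The paper cites (i)$\Rightarrow$(ii) of Theorem \ref{wep}: injective implies WEP, hence $CB(A,N)=MB(A,N)$ isometrically. That implication is exactly your chain of inequalities through Kirchberg's identity $\C\otimes_{\min}N=\C\otimes_{\max}N$, and Theorem \ref{s1} then finishes.

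For derivations you take a different route. You reuse $\|\cdot\|_{mb}=\|\cdot\|_{cb}$ for arbitrary linear maps into $N$ and then invoke Theorem \ref{t5.6}. The paper instead argues as follows:
\begin{enumerate}
\item It regards $\delta$ as $B(H)$-valued and takes, from Christensen's theorem, some $T\in B(H)$ with $\delta=\delta^T$ and $2\|T\|=\|\delta\|_{cb}$.
\item It applies a conditional expectation $P:B(H)\to N$. Since $P$ is $N$-bimodular and $\delta$ is $N$-valued, $\delta=\delta^{P(T)}$ with $P(T)\in N$ and $\|P(T)\|\le\|T\|$.
\item Then \eqref{chr} yields $\|\delta\|_{mb}\le\|\delta\|_{cb}$.
\end{enumerate}

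The two approaches trade off differently. The paper's version uses injectivity only through the expectation, with no appeal to Kirchberg's theorem. It also shows concretely that the Christensen implementing operator can be moved into $N$ without increasing its norm. Yours is more uniform: a single mechanism handles both parts and gives the norm identity for all linear maps. The cost is that it leans on Kirchberg's deep theorem for the derivation part too.

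Both routes tacitly take $\pi$ unital, as Theorem \ref{t5.6} (hence \eqref{chr}) requires.

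Your unitization remark is unnecessary, since $A$ is assumed unital. If it were needed, a cb-norm preserving linear extension furnished by injectivity of $N$ would not be enough. What matters is an extension that is itself a derivation, e.g.\ $\hat\delta(x+\lambda 1)=\delta(x)$ relative to $\hat\pi(x+\lambda 1)=\pi(x)+\lambda 1$.
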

          \begin{proof}  
           Since injective von Neumann algebras have the WEP, the equality $\|u\|_{mb}=\|u\|_{cb}$
           follows from (i) $\Rightarrow$ (ii) in Theorem
           \ref{wep}.
                      Strong similarity follows by Theorem \ref{s1}.\\
          Assume $M\subset B(H)$. Consider a c.b. derivation $\d:A \to M$ relative to $\pi$.
        Viewing it as  $B(H)$-valued we find $\|\d\|_{cb}=2\min\{\|T\| \mid T\in B(H) \ \d=\d^T\}$.
        Let $P: B(H) \to M$ be a conditional expectation projecting $B(H)$ onto $M$.
        Note $\d=\d^T$ implies $\d=\d^{P(T)}$, with  $P(T)\in M$  and $\|P(T)\|\le \|T\|$. Therefore
        by \eqref{chr}  $\|\d\|_{mb} \le \|\d\|_{cb}$ and the rest  is clear.
          \end{proof}
          \begin{rem} In the terminology of c.b. Hochschild cohomology presented in \cite{SS}
          the last assertion in Prop. \ref{pinj} means that
          $H^1_{cb}(\pi(A),M)=\{0\}$ for any $\pi$, or simply $H^1_{cb}(A,M)=\{0\}$ for any unital 
          $C^*$-subalgebra $A\subset M$.
          \end{rem}
     \begin{thm}\label{T55} 
     A von Neumann algebra
  $\M$  is injective if and only if
     all c.b. homomorphisms 
    with range $B(\ell_2) \bar\otimes \M$ are strongly similar to a $*$-homomorphism
    (or equivalently are m.b.).
    \end{thm}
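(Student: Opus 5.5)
The ``only if'' direction is immediate from what precedes. If $\M$ is injective then so is $B(\ell_2)\bar\otimes\M$, since it is the commutant of $1\otimes\M'$ and $\M'$ is injective. Proposition \ref{pinj} then says that every unital c.b. homomorphism $u$ from a $C^*$-algebra into $B(\ell_2)\bar\otimes\M$ satisfies $\|u\|_{mb}=\|u\|_{cb}<\infty$. By Theorem \ref{s1}, $u$ is therefore strongly similar to a $*$-homomorphism.

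For the ``if'' direction I will argue by contraposition through derivations, combining Theorem \ref{T10} with Theorem \ref{t3.1}. Assume every c.b. unital homomorphism with range in $B(\ell_2)\bar\otimes\M$ is m.b.

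First step: pass from homomorphisms to derivations. Take $G=\F_\infty$, $H=\ell_2(G)\simeq\ell_2$ and $A=C^*_\lambda(G)$. Let $\d: A\otimes 1_\M\to B(H)\bar\otimes\M$ be a c.b. derivation relative to the inclusion. Form the homomorphism $u$ of \eqref{e18}, which is c.b. with values in
$$M_2(B(H)\bar\otimes\M)\simeq B(\ell_2\oplus\ell_2)\bar\otimes\M\simeq B(\ell_2)\bar\otimes\M.$$
By hypothesis $u$ is m.b. Compressing to the $(1,2)$ corner is a composition with left and right multiplication by matrix units, so $\|\d\|_{mb}\le\|u\|_{mb}<\infty$. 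Thus all c.b. derivations from $A\otimes 1_\M$ into $B(H)\bar\otimes\M$ are m.b.

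Second step: apply Corollary \ref{exa}. It yields $c'>0$ with $C(E_n,\M)\ge c'\sqrt n$ for all $n$. The closed graph step, giving a uniform constant $c$ in Theorem \ref{T10}, is already contained in that corollary.

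Third step: convert from $E_n$ to $F_n$. Remark \ref{R3} gives $d_{cb}(E_n,F_n)\le 2$. Any factorization of $Id_{F_n}$ through $\M$ can be composed with an isomorphism $F_n\to E_n$ and its inverse, which yields a factorization of $Id_{E_n}$ through $\M$. Hence $C(E_n,\M)\le 2\,C(F_n,\M)$, and so $C(F_n,\M)\ge (c'/2)\sqrt n$. Theorem \ref{t3.1}, which is stated with $E_n$ but proved with the space $F_n$ of Remark \ref{rcapc}, then gives that $\M$ is injective. The equivalence ``strongly similar $\Leftrightarrow$ m.b.'' in the statement is Theorem \ref{s1}.

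The main obstacle is the first step. I must make sure that the hypothesis, stated for homomorphisms with range $B(\ell_2)\bar\otimes\M$, really covers $u$. This means identifying $M_2(B(H)\bar\otimes\M)$ with $B(\ell_2)\bar\otimes\M$ as von Neumann algebras, which holds because $H$ is separable infinite dimensional. I also need the m.b. norm to be invariant under this $*$-isomorphism, which is true since the identification is a $*$-isomorphism of the whole range algebra.

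A second point to check is unitality. Theorem \ref{s1} concerns unital homomorphisms, and $u$ is unital because the inclusion $\pi$ is unital on the unital algebra $A$. Finally, the step $\|\d\|_{mb}\le\|u\|_{mb}$ uses \eqref{e21} together with the fact that multiplication by a fixed element of $N$ has m.b. norm at most its norm, as already used in the proof of Theorem \ref{s1}.
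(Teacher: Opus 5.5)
Your proof is correct and follows the paper's route. You identify $M_2(B(\ell_2)\bar\otimes\mathcal M)\simeq B(\ell_2)\bar\otimes\mathcal M$ and use \eqref{e18} to deduce that c.b. derivations are m.b., then conclude with Corollary \ref{exa} and Theorem \ref{t3.1}, and Proposition \ref{pinj} gives the converse. Your extra step converting $C(E_n,\cdot)$ into $C(F_n,\cdot)$ via $d_{cb}(E_n,F_n)\le 2$ is correct and usefully clarifies that Theorem \ref{t3.1} is stated with $E_n$ but proved for $F_n$.
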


           \begin{proof}   Let $H=\ell_2$. 
           Assume all  c.b. homomorphisms 
    with range $B(H) \bar\otimes \M$ are m.b. This assumption on $B(H) \bar\otimes \M$ obviously passes to $M_2(B(H) \bar\otimes \M)$.
     It follows (see  \eqref{e18})
     that all c.b. derivations with range in $B(H) \bar\otimes \M$ are m.b.  which implies
     by an elementary argument that the c.b. and m.b. norms must be equivalent
     on the space of derivations considered in Theorem \ref{T55}.
     Then  $\M$ is injective by Corollary \ref{exa} and Theorem \ref{t3.1}. \\
     Conversely, if  $\M$ is injective
     so is $N=B(H) \bar\otimes \M$ 
     so the converse holds by Proposition \ref{pinj}.
   \end{proof} 
                    The preceding result leaves out the finite case. Fortunately,
           a classical theorem due to Vasilescu and Zsid\'o \cite{VZ} tells us that
           strong similarity is automatic when the c.b. (or even merely bounded) homomorphism takes values in a {\it finite} 
           von Neumann algebra. This allows us to conclude:
           
           \begin{thm}\label{cfin}  Let $M$ a von Neumann algebra. The following are equivalent:
       \begin{itemize}     \item[{\rm (i)} ] Any c.b. unital homomorphism into $M$ is strongly similar to a $*$-homomorphism.
           \item[{\rm (ii)}] Any c.b. unital homomorphism into $M$ is m.b.
        \item[{\rm (iii)}] $M$ can be decomposed as $M=M_1\oplus M_2$ with $M_1$ injective and $M_2$ finite.
        \item[{\rm (iv)}] Any c.b. derivation $\d: A\to M$ relative to a unital $*$-homomorphism $\pi:A\to M$
        (on any unital $C^*$-algebra $A$)
          is m.b.
            \item[{\rm (v)}] Any c.b. derivation $\d: A\to M$ relative to a $*$-homomorphism $\pi:A\to M$
             (on any unital $C^*$-algebra $A$)
          is inner.\\ Equivalently, $H^1_{cb}(A, M)=\{0\}$ for any unital $C^*$-subalgebra $A\subset M$.
              \end{itemize}
\end{thm}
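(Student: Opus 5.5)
The plan is to prove (i)$\Leftrightarrow$(ii), (iv)$\Leftrightarrow$(v), (iii)$\Rightarrow$(i),(v), and finally the hard direction (ii) or (iv) $\Rightarrow$ (iii).

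\emph{Equivalences that are already available.} Theorem \ref{s1} gives (i)$\Leftrightarrow$(ii) directly, since $u$ is m.b. if and only if it is strongly similar to a $*$-homomorphism. For (iv)$\Leftrightarrow$(v), assume $M\subset B(H)$. By Theorem \ref{t5.6}, an m.b. derivation has the form $\d=\d^\t$ with $\t\in M$, so (iv)$\Rightarrow$(v). Conversely, an inner derivation $\d^\t$ with $\t\in M$ has $\|\d^\t\|_{mb}\le 2\|\t\|$, because left and right multiplication by $\t$ are m.b.; this gives (v)$\Rightarrow$(iv). Here ``inner'' is read as implemented by an element of $M$. The passage between homomorphisms and derivations goes through \eqref{e18}. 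A unital homomorphism into $M_2(M)$ built from $\d$ is c.b. if $\d$ is, and if that homomorphism is m.b. then so is its corner $\d$. Property (ii) for $M$ passes to $M_2(M)$, since $M_2(M)$ is again of the form in (iii) when $M$ is, so (ii) for all such algebras yields (iv).

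\emph{(iii)$\Rightarrow$(i) and (v).} Write $M=M_1\oplus M_2$ and let $p$ be the corresponding central projection. A unital homomorphism $u$ into $M$ splits as $u=u_1\oplus u_2$ with $u_k=p_ku$ unital into $M_k$, and each $u_k$ is still c.b. On the injective summand, Proposition \ref{pinj} produces an invertible $S_1\in M_1$. On the finite summand, the Vasilescu--Zsid\'o theorem \cite{VZ} produces an invertible $S_2\in M_2$. Then $S=S_1\oplus S_2\in M$ implements the strong similarity for $u$. For derivations, \eqref{e18} turns a derivation into a homomorphism into $M_2(M)=M_2(M_1)\oplus M_2(M_2)$, which again has the form in (iii). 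The resulting similarity gives (ii) for that homomorphism, hence (iv), hence (v). One can also argue directly: finite von Neumann algebras admit inner c.b. derivations by the same Vasilescu--Zsid\'o/Christensen-type result, and the injective summand is handled by Proposition \ref{pinj}.

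\emph{(ii)$\Rightarrow$(iii): the main obstacle.} Decompose $M=M_f\oplus M_\infty$ with $M_f$ finite and $M_\infty$ properly infinite, and aim to show that $M_\infty$ is injective. Property (ii) passes to $M_\infty=pM$: extend a homomorphism into $pM$ by $\oplus\, \pi_0(\cdot)(1-p)$ using a character, or by a direct sum with a $*$-homomorphism into $(1-p)M$ after adjoining a unit. Since $M_\infty$ is properly infinite, $M_\infty\simeq B(\ell_2)\bar\otimes M_\infty$. I expect this isomorphism to be the crux: it holds because $1$ is a sum of countably many orthogonal projections each equivalent to $1$, at least when each central summand is countably decomposable. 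Granting it, Theorem \ref{T55} applied with $\M=M_\infty$ shows that $M_\infty$ is injective.

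The delicate point is the $\sigma$-finiteness assumption behind $M_\infty\simeq B(\ell_2)\bar\otimes M_\infty$. If it fails, I would instead work with a cut-down $qM_\infty q$ by a $\sigma$-finite properly infinite projection $q$. For $q$ I would use a countably decomposable projection of central support large enough, with $qM_\infty q\simeq B(\ell_2)\bar\otimes qM_\infty q$. I would then pass the hypothesis to $qM_\infty q$ by amplifying a homomorphism into $qMq$ back into $M$ via an isometry in $M$. Once each such corner is injective, $M_\infty$ is injective, because injectivity is local for properly infinite algebras via equivalent projections. The same reduction, with Corollary \ref{exa} in place of Theorem \ref{T55}, starting from (iv), gives (iv)$\Rightarrow$(iii) directly.
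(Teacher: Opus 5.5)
Your proposal is correct and follows the paper's proof essentially step for step. You use Theorem \ref{s1} for (i)$\Leftrightarrow$(ii), Theorem \ref{t5.6} for (iv)$\Leftrightarrow$(v), and Proposition \ref{pinj} with Vasilescu--Zsid\'o and the $M_2(M)$ trick via \eqref{e18} for (iii)$\Rightarrow$(i),(iv). For the converse you use the finite/properly infinite splitting with Theorem \ref{T55}, resp.\ for (iv)$\Rightarrow$(iii) Corollary \ref{exa}, which needs Theorem \ref{t3.1} alongside it, as in the paper.

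Your $\sigma$-finiteness detour through corners $qM_\infty q$ is unnecessary. In any von Neumann algebra a properly infinite projection is a countable orthogonal sum of projections equivalent to it, so $M_\infty\simeq B(\ell_2)\bar\otimes M_\infty$ holds in general, exactly as the paper uses it.
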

 \begin{proof} The first two assertions are equivalent by Theorem \ref{s1}.
 Assume (i). By the structure theory of von Neumann algebras (see e.g. \cite[III.1.4.1]{bla} and  \cite[III.1.5.6]{bla}), we have a decomposition 
 of the form $M=M_1\oplus M_2$ with $M_2$ finite and $M_1$ properly infinite, which implies $M_1=B(\ell_2)\bar\otimes M_1$. 
 Clearly each of    $M_1, M_2$ inherits the  property (i).
 By Theorem \ref{T55} $M_1$ is injective, which gives us (iii).
 Assume (iii). Then $M_1$ satisfies (i) by Proposition \ref{pinj} and $M_2$ does by Vasilescu and Zsid\'o's
 result in \cite{VZ}. Then (i) follows. Thus (i), (ii) and (iii) are equivalent.\\
 We already know (iv) $\Leftrightarrow$ (v) by Theorem \ref{t5.6}. Clearly $M$ satisfies (iii) $\Leftrightarrow$ $M_2(M)$ satisfies (iii) $\Leftrightarrow$ $M_2(M)$ satisfies (ii) and the latter implies that $M$ satisfies  (iv) (see \eqref{e18}). This shows (iii) $\Rightarrow$ (iv). Assume (iv).
 Note that for any decomposition $M=M_1\oplus M_2$
 the inclusion $M_1\subset M$ is max-isometric, so that $M_1$ inherits (iv) from $M$.
  Let $M_1=B(\ell_2)\bar\otimes N$ be the properly infinite part  of $M$.
  By Corollary \ref{exa} and Theorem \ref{t3.1}
  $N$ and hence $M$ must be injective, which means that (iii) holds. Thus (iv) $\Leftrightarrow$ (iii).
 \end{proof}
          
\begin{rem} It is erroneously claimed in \cite{fang} (using \cite[Lemma 2.3]{[19]}) that all QWEP von Neumann algebras satisfy
(i) in Theorem \ref{cfin}.
\end{rem}

\end{document}